\documentclass[11pt]{article}
\usepackage{geometry}
\geometry{
    a4paper,
    top=25mm,
    bottom=25mm,
    left=20mm,
    right=20mm
} 
%==================================================================================================================================
% Packages and macros go here 
\usepackage{lineno}
\usepackage{algorithm}
\usepackage{amsfonts}
\usepackage{amsmath}
\usepackage{amsthm}
\usepackage{amssymb}
\usepackage{graphicx} 
\usepackage{algorithmic}
\usepackage{enumitem}
\usepackage{textcomp}
\usepackage{tikz}

% === PAGE AND FIGURE LAYOUT ===
\usepackage{graphicx}
\usepackage{xcolor}
\usepackage{caption} 
\usepackage[colorlinks]{hyperref}

% === MISC TOOLS ===  

%

%% for itembox added by FERGY 2021.5.29
\newtheorem{theorem}{Theorem}[section]
   
\newtheorem{remark}{Remark}
\newtheorem{proposition}{Proposition}
\newtheorem{problem}{Problem}

\newtheorem{assumption}[theorem]{Assumption}
\newtheorem{lemma}[theorem]{Lemma}

%--------------------------------------------------------------------------------------------------------------------------------
%--------------------------------------------------------------------------------------------------------------------------------
\usepackage{bm}
\newcommand{\vect}[1]{\boldsymbol{\mathbf{#1}}}  
\renewcommand{\div}[1]{\operatorname{div}(#1)}
%--------------------------------------------------------------------------------------------------------------------------------
% NOTATIONS ADDED BY FERGY
%--------------------------------------------------------------------------------------------------------------------------------
\DeclareMathOperator*{\argmin}{arg\,min} 
\newcommand{\dotu}{\dot{u}}
\newcommand{\vertiii}[1]{{\left\vert\kern-0.25ex\left\vert\kern-0.25ex\left\vert #1 
    \right\vert\kern-0.25ex\right\vert\kern-0.25ex\right\vert}}
\newcommand{\cprod}[2]{{(\kern-0.25ex( #1 , #2 )\kern-0.25ex)}}
% spaces and functions
\newcommand{\smallOad}{\mathcal{O}}
\newcommand{\Oad}{\mathcal{O}_{ad}}
\newcommand{\ADMMset}{\mathcal{A}}
\newcommand{\dcirc}{d_{\circ}}
\newcommand{\realu}{u_{1}}
\newcommand{\imagu}{u_{2}}
\newcommand{\realp}{p_{1}}
\newcommand{\imagp}{p_{2}}
\newcommand{\realq}{q_{1}}
\newcommand{\imagq}{q_{2}}
\newcommand{\realLambda}{\Lambda_{1}}
\newcommand{\imagLambda}{\Lambda_{2}}
\newcommand{\extu}{\tilde{u}} 
\newcommand{\supb}{b_{\infty}}
%%%

\newcommand{\sfTheta}{\vect{\Theta}}
\newcommand{\HH}{\vect{\mathsf{H}}}
\newcommand{\QQ}{\vect{\mathsf{Q}}}
\newcommand{\LL}{\vect{\mathsf{L}}}

\newcommand{\VV}{\vect{\theta}}
\newcommand{\divbb}{\operatorname{div}{\vect{b}}}
\newcommand{\divVV}{\operatorname{div}{\VV}}
\newcommand{\divWW}{\operatorname{div}{\WW}}
\newcommand{\spaceV}{{\vect{\mathsf{V}}}}
\newcommand{\Vn}{\theta_{n}} 
\newcommand{\WW}{\vect{\tilde{\theta}}}
\newcommand{\Wn}{\tilde{\theta}_{n}}

\newcommand{\nn}{\vect{n}}

\newcommand{\dn}[1]{\partial_{\nn}{#1}}

\newcommand{\intO}[1]{\int_{\Omega}{#1}{\, {d} x}}

\newcommand{\intOstar}[1]{\int_{\Omega^{\star}}{#1}{\, {d} x}}
\newcommand{\intS}[1]{\int_{\Sigma}{#1}{\, {d} s}}  
\newcommand{\intG}[1]{\int_{\Gamma}{#1}{\, {d} s}} 
\newcommand{\intD}[1]{\int_{D}{#1}{\, {d} x}} 
\newcommand{\chio}{\chi_{\omega^{\textsf{c}}}}
\newcommand{\chion}{\chi_{\omega_{n}^{\textsf{c}}}}
\newcommand{\intDO}[1]{\int_{\omega^{\textsf{c}}}{#1}{\, {d} x}}
\newcommand{\intDOn}[1]{\int_{\omega_{n}^{\textsf{c}}}{#1}{\, {d} x}}
\newcommand{\intOn}[1]{\int_{\Omega_{n}}{#1}{\, {d} x}}

\newcommand{\intGstar}[1]{\int_{\Gamma^{\star}}{#1}{\, {d} s}}

\newcommand{\norm}[1]{\left\|{#1}\right\|} 
\newcommand{\abs}[1]{\left|{#1}\right|}

\title{Enhanced shape recovery in advection--diffusion problems via a novel ADMM-based CCBM optimization}
\author{Elmehdi Cherrat$^{\ast, 1}$ $\cdot$ Lekbir Afraites$^{\ast, 2}$ $\cdot$ Julius Fergy Tiongson Rabago$^{\dagger}$}

\date{%
	{\footnotesize
 	%%%
        $^\ast$Mathematics and Interactions Teams (EMI)\\%
        Faculty of Sciences and Techniques\\%
        Sultan Moulay Slimane University\\%
        Beni Mellal, Morocco\\\vspace{-2pt}
       	\texttt{${}^{1}$cherrat.elmehdi@gmail.com, \, ${}^{2}$l.afraites@usms.ma}\\[2ex]
        %%%
	$^{\dagger}$Faculty of Mathematics and Physics\\%
	 Institute of Science and Engineering\\%
         Kanazawa University, Kanazawa 920-1192, Japan\\\vspace{-2pt}
        \texttt{jftrabago@gmail.com}}\\[2ex]           
    \today % 2d revision: April 25, 2025
}

%==================================================================================================================================
\begin{document} 

\maketitle

\begin{abstract}
This work proposes a novel shape optimization framework for geometric inverse problems governed by the advection--diffusion equation, based on the coupled complex boundary method (CCBM). 
Building on recent developments \cite{Afraites2022, Rabago2023b, Rabago2025, RabagoAfraitesNotsu2025, RabagoNotsu2024}, we aim to recover the shape of an unknown inclusion via shape optimization driven by a cost functional constructed from the imaginary part of the complex-valued state variable over the entire domain.  
We rigorously derive the associated shape derivative in variational form and provide explicit expressions for the gradient and second-order information.  
Optimization is carried out using a Sobolev gradient method within a finite element framework.  
To address difficulties in reconstructing obstacles with concave boundaries, particularly under measurement noise and the combined effects of advection and diffusion, we introduce a \textit{state-of-the-art} numerical scheme inspired by the Alternating Direction Method of Multipliers (ADMM).  
In addition to implementing this non-conventional approach, we demonstrate how the adjoint method can be efficiently applied and utilize \textit{partial} gradients to develop a more efficient CCBM--ADMM scheme.  
The accuracy and robustness of the proposed computational approach are validated through various numerical experiments.

\medskip

\textit{Keywords:}{ Alternating direction method of multipliers, advection-diffusion model, coupled complex boundary method, geometric inverse problem, shape optimization, shape derivatives, adjoint method}

\medskip
\textit{MSC:}{ 49Q10, 49K20, 65K10}
\end{abstract}

%All acknowledgements should be placed in the back of the paper after Conclusions..
%--------------------------  INTRODUCTION --------------------------
%--------------------------  INTRODUCTION --------------------------
%--------------------------  INTRODUCTION --------------------------
%\linenumbers
\section{Introduction}
\label{sec:Introduction}
The advection--diffusion equation is a fundamental model in applied mathematics, describing the combined effects of transport and dispersion of substances within a medium \cite{Okubo1980}.  
It arises in key environmental applications, such as the spread of pollutants in air and water and the transport of solutes in rivers and groundwater.  
Accurate modeling of these processes is essential for pollution control, water resource management, and related environmental systems \cite{ZoppouKnight1999}.  
Because of its central role in predicting and managing transport phenomena, the advection--diffusion framework has been widely studied in both theory and applications.  

In recent years, significant attention has been devoted to inverse problems for advection--diffusion systems, particularly for identifying unknown shapes or sources from indirect measurements.  
Many works have focused on point source identification \cite{AndrleBadia2002,BadiaDuong2002,BadiaDuongHamdi2005,Hamdi2012,HamdiMahfoudhi2013,Neelz2006}, while others have investigated shape reconstruction using adjoint-based methods \cite{YanHouGao2017}, domain derivative techniques \cite{YanSuJing2014}, and topological derivatives \cite{Fernandezetal2021}.  
More recently, we have studied geometric inverse problems for simplified advection--diffusion systems using conventional numerical methods \cite{CherratAfraitesRabago2025b,CherratAfraitesRabago2025}.  
Building on this foundation, the present work considers a more general setting with spatially varying coefficients, and introduces a reconstruction strategy based on a non-conventional numerical shape optimization framework combined with an augmented Lagrangian method.  
\begin{figure}[htbp]
\centering
\begin{tikzpicture}[scale=0.8]
    % The annular domain Omega (light gray fill)
    \draw[very thick, draw=black, fill=gray!1] (0,0) ellipse (3.5cm and 2.2cm);
    \node at (0.0,-1.6) {\large $\varOmega = D \setminus \overline{\omega}$};

    % The unknown inclusion omega (white fill to show it is a hole in Omega)
    \draw[dashed, very thick, draw=blue, fill=yellow!5] plot [smooth cycle] coordinates {(-1.2,0.7) (0.3,0.9) (1.1,-0.4) (-0.2,-1.1) (-1.3,-0.5)};
    \node at (-0.1,0) {\large  $\omega$};
    
    % The unknown boundary Gamma
   \node[color=blue] at (1.7,0.7) {\large  $\varGamma = \partial \omega$};
    
    % Measurement label on Sigma (optional but helpful for Reviewer 2)
    \node[color=black]  at (4.5,0.25) {\large  $\varSigma=\partial D$};

    % Flow domain D 
    \node at (-1.5,1.25) {\large $D$};
\end{tikzpicture}
\caption{{Conceptual model}}
\label{fig:conceptual_model}
\end{figure}

The objective of this paper is to recover both the solution $u$ and an open, connected, bounded region $\varOmega \subset \mathbb{R}^d$ in an advection--diffusion problem.
The domain has an annular structure, $\varOmega = D \setminus \overline{\omega}$, {where the outer boundary $\varSigma := \partial D$} is known and accessible for measurements, while the inner boundary $\varGamma := \partial \omega$ is unknown and must be reconstructed.
Here, $D \subset \mathbb{R}^d$ represents the flow domain, and $\omega \Subset D$ is an inclusion modeling {an inert obstacle or a perfectly absorbing sink. Figure~\ref{fig:conceptual_model} shows a conceptual model.}
The reconstruction is based on a single pair of Cauchy data $(f,g)$ prescribed on $\varSigma$, with a homogeneous Dirichlet condition imposed on the unknown boundary $\varGamma$.
The corresponding mathematical model is the following overdetermined boundary value problem:
\begin{equation}\label{eq:surdeterminee}
-\div{\sigma \nabla u} + \vect{b} \cdot \nabla u = 0 \ \text{in } \varOmega, \quad
u = 0 \ \text{on } \varGamma, \quad
u = f, \;\; \sigma \dn{u} = g \ \text{on } \varSigma,
\end{equation}
where, in the context of pollutant transport in water, $\sigma := \sigma(x)$ denotes the diffusion coefficient, $u := u(x)$ the contaminant concentration, $\vect{b} := \vect{b}(x)$ the water flow velocity, and $\dn{u}$ denotes the outward normal derivative of $u$ on $\varSigma$.

Consequently, we consider the inverse geometry problem:
\begin{equation}\label{eq:inverse_problem}
	\text{Find } \varGamma \text{ and } u \text{ that satisfy the overdetermined system } \eqref{eq:surdeterminee}.
\end{equation}
We solve \eqref{eq:inverse_problem} using shape optimization \cite{DelfourZolesio2011,HenrotPierre2018,SokolowskiZolesio1992}, a well-established framework for free boundary problems \cite{EpplerHarbrecht2012b} and shape identifications \cite{CaubetDambrineKateb2013,CaubetDambrineKatebTimimoun2013,EpplerHarbrecht2005,HarbrechtTausch2011,HarbrechtTausch2013}.  
For overdetermined systems such as \eqref{eq:surdeterminee}, one typically prescribes a boundary condition on the free boundary to ensure a well-posed state equation, while the remaining data are identified via least-squares minimization \cite{CherratAfraitesRabago2025b,CherratAfraitesRabago2025}.

Here, we propose a non-conventional shape optimization method based on the \textit{coupled complex boundary method} (CCBM) \cite{Afraites2022,Chengetal2014,Rabago2023b,RabagoAfraitesNotsu2025,RabagoNotsu2024}.  
The CCBM encodes Neumann data as the real part and Dirichlet data as the imaginary part of a Robin-type complex boundary condition.  
The inverse problem is then solved by minimizing the $L^2$ norm of the imaginary part of the solution over the domain, which distinguishes the CCBM from conventional methods that impose misfit penalties directly on the boundary \cite{AfraitesDambrineEpplerKateb2007,AfraitesDambrineKateb2007,AfraitesDambrineKateb2008,AfraitesRabago2024,CherratAfraitesRabago2025b,CherratAfraitesRabago2025}.  

The main novelty of this study lies in extending the CCBM framework to advection--diffusion systems with spatially varying coefficients $\sigma$ and $\vect{b}$.  
This setting provides a richer and more realistic description of transport phenomena compared to constant-coefficient models.  
In addition, we consider complex geometries, including nonconvex shapes, beyond the smooth convex cases commonly addressed in earlier works \cite{YanHouGao2017,YanSuJing2014}.  
Our formulation therefore broadens the applicability of these non-conventional shape reconstruction techniques to more general scenarios relevant to practical modeling.

To improve the recovery of complex-shaped obstacles under noisy data, we embed the CCBM within an augmented Lagrangian framework based on the Alternating Direction Method of Multipliers (ADMM) \cite{CherratAfraitesRabago2025b,RabagoHadriAfraitesHendyZaky2024}.  
This provides an alternative to conventional regularization techniques such as perimeter or surface penalization.  
Note that existing ADMM-based shape optimization methods use real-valued auxiliary variables. 
In our case, the state variable is complex-valued, so the auxiliary variable must be redefined consistently and adapted to the CCBM structure.  
This results in a state-of-the-art ADMM algorithm specifically designed for CCBM.
In addition, we propose a partial gradient scheme that further improves reconstruction accuracy while reducing computational cost.  
Altogether, the resulting CCBM--ADMM framework provides a unified and efficient optimization strategy, whose effectiveness is demonstrated through numerical experiments, with particular emphasis on three-dimensional configurations.

%--------------------------
%	ORGANIZATION OF THE PAPER
%--------------------------
The paper is organized as follows. Section~\ref{sec:problem_setting} formulates the shape optimization problem and the CCBM framework. Section~\ref{sec:Shape_sensitivity_analysis} carries out the shape sensitivity analysis. Section~\ref{sec:numerical_algorithms_and_examples} presents a gradient-based algorithm and numerical experiments: two-dimensional examples illustrate the conventional CCBM, followed by three-dimensional demonstrations of the ADMM approach for spatially varying coefficients. Section~\ref{sec:conclusion} concludes the paper.
\section{The problem setting}
\label{sec:problem_setting}
%----------------------------------------------------------------------------------------------------------------------------------
%	NOTATIONS
%----------------------------------------------------------------------------------------------------------------------------------
Throughout the article, we use the standard Lebesgue and Sobolev spaces for real-valued functions.
For complex-valued functions, we use $\HH^m(\varOmega)$, with the inner product 
$\cprod{u}{v}_{m,\varOmega} = \sum_{|\alpha| \leqslant m} \intO{D^{\alpha} u\, \overline{D^{\alpha} v}}$ 
and the associated norm $\vertiii{u}_{\HH^m(\varOmega)}^{2} = \cprod{u}{u}_{m,\varOmega}$.
We also introduce the subspaces 
$V(\varOmega) := \left\lbrace \varphi \in H^{1}(\varOmega) \ \big|\ \varphi|_{\varGamma} = 0 \right\rbrace$ 
and 
$\spaceV(\varOmega) := \left\lbrace \varphi \in \HH^{1}(\varOmega) \ \big|\ \varphi = 0 \ \text{on}\ \varGamma \right\rbrace$.
Moreover, we define $Q = L^{2}(\varOmega)$ and $\QQ = \LL^{2}(\varOmega)$.
We equip the space $\spaceV(\varOmega)$ with the norm $\vertiii{\varphi}_{\spaceV(\varOmega)} = \sqrt{\vertiii{\nabla{\varphi}}_{\QQ^{d}} }$.
Throughout the paper, $c>0$ denotes a generic positive constant, whose value may change from line to line.
%----------------------------------------------------------------------------------------------------------------------------------
%	THE MAIN PROBLEM
%----------------------------------------------------------------------------------------------------------------------------------
\subsection{Key assumptions and the inverse geometry problem}
\begin{assumption}\label{assumption:key_conditions}
We consider Equation~\eqref{eq:surdeterminee} under the following assumptions: (i) $\sigma \in W^{1,\infty}(D)^{d\times d}$ and there is $\sigma_{0} \in \mathbb{R}_{+}$ such that $\sum_{i,j=1}^d \sigma_{ij} \xi_i \xi_j \geqslant \sigma_{0} \|\xi\|_d^{2}, \text{ for any } \xi \in \mathbb{R}^d$; (ii) $\vect{b} \in W^{1,\infty}(D)^d$; (iii) $f \in H^{3/2}(\varSigma)$ with $f \not\equiv 0$, and $g \in H^{1/2}(\varSigma)$ is the corresponding compatible boundary measurement; and (iv) there exists a constant $c> 0$, independent of $\Omega$, such that $\abs{\vect{b}}_{L^{\infty}(D)^{d}} < c \, \sigma_0$.
\end{assumption} 

To simplify the analysis and notation, we assume that $\sigma \in W^{1,\infty}(D)^{1 \times 1}$, i.e., $\sigma$ is scalar. We denote $\supb := \lvert \mathbf{b} \rvert_{L^{\infty}(D)^d}$, and we work throughout the paper under Assumption~\ref{assumption:key_conditions}, unless stated otherwise.

We fix some notations used throughout this paper.  
Let $D \subset \mathbb{R}^{d}$, $d \in \{2,3\}$, be a bounded open set and $\dcirc>0$.  
We define an open set $D_{\dcirc} \Subset D$ with ${C}^{\infty}$ boundary satisfying
$\{x \in D : d(x,\partial D) > \dcirc/2\} \subset D_{\dcirc} \subset \{x \in D : d(x,\partial D) > \dcirc/3\}$.  
In fact, we assume without further notice that the inclusion $\emptyset \neq \omega \subset D$ satisfies $\underline{d} \leqslant \operatorname{diam}(\omega) \leqslant \overline{d} < \operatorname{diam}(D)$, where $\underline{d}, \overline{d}>0$ are fixed positive constants.
These conditions are required for the existence proof of an optimal shape in the proposed optimization problem; see Subsection~\ref{subsec:existence_of_optimal_solution}. This amounts to a technical assumption ensuring that a uniform Poincar\'e--Friedrichs inequality holds (cf. \cite{BoulkhemairChakib2007}).
%%%%{Moreover, for clarity, we explicitly assume that the inclusion $\omega \subset D$ is non-empty and has inner radius bounded below by a fixed positive constant.}

We then define the set of admissible obstacles as
\begin{equation}\label{eq:set_of_admissible_obstacles}
\smallOad := \left\{ \omega \Subset D_{\dcirc} \;\left|\;
\begin{aligned}
& \emptyset \neq \omega \text{ is ${C}^{1,1}$, $D \setminus \overline{\omega}$ is connected, }\\
&\text{and} \ \underline{d} \leqslant \operatorname{diam}(\omega) \leqslant \overline{d} < \operatorname{diam}(D)
\end{aligned} \right. \right\}.
\end{equation}
%%%%% $r_0 > 0$, i.e., $\mathrm{inrad}(\omega) \ge r_0$.}

A domain $\varOmega \subset \mathbb{R}^{d}$ is said to be \emph{admissible} if  
$\varOmega = D \setminus \overline{\omega}$ for some $\omega \in \smallOad$, in which case we write $\varOmega \in \Oad$.  
We assume the existence of $\omega^\star \in \smallOad$ (equivalently $\varOmega^\star \in \Oad$) such that the overdetermined system~\eqref{eq:surdeterminee} admits a solution.  
Hence, the inverse geometry equation~\eqref{eq:inverse_problem} becomes
\begin{equation}\label{eq:shape_problem}
    \text{Find $\omega \in \smallOad$ and $u$ such that \eqref{eq:surdeterminee} holds.}
\end{equation}
{ All annular domains $\varOmega$ are assumed admissible unless stated otherwise.}
% and Assumption~\ref{assumption:key_conditions} is assumed throughout the article.

%%%%%%%%%%%%%%%%%%%%%%%%%%%%%%%%%%%%%%%%%%%%%%%%%%%%%%%%%%%%%%%%%%%%%%%%%%%%%%
%----------------------------------------------------------------------------------------------------------------------------------
%	FORMULATION AND NOTATIONS
%----------------------------------------------------------------------------------------------------------------------------------
\subsection{The coupled complex boundary value problem}\label{subsec:formulations}  
To address the inverse geometry equation~\eqref{eq:shape_problem}, we adopt a non-conventional shape optimization approach based on the \textit{coupled complex boundary method} (CCBM) \cite{Chengetal2014}.  
Unlike traditional least-squares methods that fit boundary data directly and may suffer from numerical instabilities, CCBM relocates the data fitting to the domain interior.  
This naturally leads to a cost functional defined as a domain integral, which improves stability and robustness in the reconstruction process, in a manner comparable to the Kohn--Vogelius method \cite{KohnVogelius1987}.

The main idea of CCBM is to encode the measured Dirichlet and Neumann data into a complex-valued Robin boundary condition, with the real and imaginary parts representing these measurements.  
This formulation regularizes the inverse problem, providing stable and accurate reconstructions even in the presence of noise, while increasing the size of the resulting system.  
CCBM has been successfully applied to various inverse problems, including conductivity reconstruction, parameter identification in elliptic equations, free boundary problems, and tumor localization~\cite{Afraites2022,Chengetal2014,Gongetal2017,Ouiassaetal2022,Rabago2023b,Rabago2025,RabagoAfraitesNotsu2025,RabagoNotsu2024,ZhengChengGong2020}.

To implement CCBM, system~\eqref{eq:surdeterminee} is reformulated as the following complex-valued boundary value problem:
%%%%\begin{equation}\label{eq:complex_PDE}
%%%%	-\div{\sigma\nabla{u}} + \vect{b} \cdot \nabla{u} = 0 \quad \text{in $\varOmega$},\qquad
%%%%	u = 0 \quad \text{on $\varGamma$},\qquad
%%%%	\sigma \dn{u} + i u = g + i f \quad \text{on $\varSigma$},
%%%%\end{equation}

{
\begin{equation}\label{eq:complex_PDE}
\left\{
\begin{aligned}
    -\div{\sigma \nabla u} + \mathbf{b} \cdot \nabla u &= 0 \quad &&\text{in } \varOmega, \\
    u &= 0 \quad &&\text{on } \varGamma, \\
    \sigma \dn{u} + i u &= g + i f \quad &&\text{on } \varSigma.
\end{aligned}
\right.
\end{equation}
}where $u : \varOmega \to \mathbb{C}$ and $i = \sqrt{-1}$. 
Hereinafter, unless otherwise specified, $u = u(\varOmega)$ will always denote the solution of \eqref{eq:complex_PDE}.

Let the sesquilinear form $a: \spaceV(\varOmega) \times \spaceV(\varOmega) \to \mathbb{C}$ and the linear form $l: \spaceV(\varOmega) \to \mathbb{C}$ be
\begin{align}
    a(\varphi,\psi) & = \intO{ \sigma \nabla \varphi \cdot \nabla \overline{\psi} + (\vect{b} \cdot \nabla \varphi) \overline{\psi} } 
    + i \intS{\varphi \overline{\psi}}, \label{eq:sesquilinear_form_a}\\
    l(\psi) & = \intS{(g + i f) \overline{\psi}}, \quad \forall \varphi, \psi \in \spaceV(\varOmega).\label{eq:linear_form_l}
\end{align}
The weak formulation of \eqref{eq:complex_PDE} reads:
\begin{equation}\label{eq:weak_form_of_state}
    \text{Find $u \in \spaceV(\varOmega)$ such that $a(u, \psi) = l(\psi)$ for all $\psi \in \spaceV(\varOmega)$.}
\end{equation}

Existence and uniqueness of the solution $u$ follow from the complex Lax--Milgram lemma
\cite[p.~368]{DautrayLionsv31999}, since the bilinear form $a(\cdot,\cdot)$ is coercive in the sense that
$\Re\bigl(a(\varphi,\varphi)\bigr)\ge(\sigma_{0}-c\,\supb)\,
\vertiii{\varphi}_{\spaceV(\varOmega)}^{2}$,
with a constant $c>0$ that may depend on fixed parameters but is independent of $\varOmega$.
In addition, for data $(f,g)\in H^{3/2}(\varSigma)\times H^{1/2}(\varSigma)$,
the solution satisfies $u\in \HH^{2}(\varOmega)\cap \spaceV(\varOmega)$.

Let us write $u = \realu + i \imagu := \Re{\{u\}} + i \Im{\{u\}} $, where $\realu, \imagu : \varOmega \to \mathbb{R}$.
Then, we can split the complex PDE \eqref{eq:complex_PDE} into the coupled system of real PDEs

\begin{equation}\label{eq:real_part}
\left\{\arraycolsep=3pt\def\arraystretch{1}
    \begin{array}{rcll}
    -\div{\sigma\nabla \realu}+\vect{b} \cdot \nabla \realu & = & 0 & \text{in $\varOmega$},\\
    \realu & = & 0 & \text{on $\varGamma$},\\
    \sigma\dn{\realu} -\imagu & = & g& \text{on $\varSigma$},
    \end{array}
\right.\tag{Re}
\end{equation}
\begin{equation}\label{eq:imaginary_part}
\left\{\arraycolsep=3pt\def\arraystretch{1}
    \begin{array}{rcll}
    -\div{\sigma\nabla \imagu}+\vect{b} \cdot \nabla \imagu & = & 0 & \text{in $\varOmega$},\\
    \imagu & = & 0 & \text{on $\varGamma$},\\
    \sigma\dn{\imagu} +\realu & = & f & \text{on $\varSigma$}.
    \end{array}
\right.\tag{Im}
\end{equation}

Observe from \eqref{eq:real_part} and \eqref{eq:imaginary_part} that if $\imagu = 0$ in $\varOmega$, then $\imagu = \dn{\imagu} = 0$ on $\varSigma$, $\realu = u = 0$ on $\varGamma$, and $\realu = u = f$, $\dn{\realu} = \dn{u} = g$ on $\varSigma$. Hence, $(\varOmega, \realu)$ solves the overdetermined system~\eqref{eq:surdeterminee}. Conversely, any $(\varOmega, u)$ satisfying \eqref{eq:surdeterminee} ensures that $\realu$ and $\imagu$ satisfy \eqref{eq:real_part} and \eqref{eq:imaginary_part}.  

Consequently, problem \eqref{eq:inverse_problem} can be reformulated as follows:

\begin{problem}\label{prob:inverse}
Find $(\varOmega, u(\varOmega)) \in \Oad \times \spaceV(\varOmega)$ such that $\imagu = \Im\{u\} = 0$ in $\varOmega$.
\end{problem}

To solve Problem~\ref{prob:inverse}, we introduce the cost functional $J(\varOmega)$, defined by
\begin{equation}\label{eq:cost_functional}
    J(\varOmega) := \frac{1}{2} \intO{\abs{\imagu}^{2}}.
\end{equation}

For each fixed $\varOmega \in \Oad$, there exists a unique weak solution $u(\varOmega)$ to \eqref{eq:complex_PDE}, so the mapping $\varOmega \mapsto u(\varOmega) \in \spaceV(\varOmega)$ is well-defined. 
Hereinafter, \eqref{eq:complex_PDE} will be referred to as the state equation, and its solution simply as the state.

We can now state the shape identification problem:
\begin{problem}\label{prob:shape_optimization}
	Find $\varOmega^{\star} \in \Oad$ such that $J(\varOmega^{\star}) = \min_{\varOmega \in \Oad} J(\varOmega)$.
\end{problem}

\subsection{Existence of shape solution}\label{subsec:existence_of_optimal_solution}
It can be verified that if Problem~\ref{prob:inverse} has a solution $(u, \tilde{\varOmega})$, such that $\imagu = 0$ in $\tilde{\varOmega} \in \Oad$, then Problem~\ref{prob:inverse} is equivalent to Problem~\ref{prob:shape_optimization}.
Accordingly, we assert that Problem~\ref{prob:shape_optimization} admits a solution:
\begin{proposition}\label{prop:existence_of_optimal_solution}
	Problem~\ref{prob:shape_optimization} admits a solution in $\Oad$.
\end{proposition}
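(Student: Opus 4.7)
The plan is to apply the direct method of the calculus of variations. First, I would take a minimizing sequence $(\omega_n) \subset \smallOad$, set $\varOmega_n := D \setminus \overline{\omega_n}$, and choose it so that $J(\varOmega_n) \downarrow m := \inf_{\varOmega \in \Oad} J(\varOmega) \geq 0$. Since each $\omega_n \Subset D_{\dcirc}$ is $C^{1,1}$, the family shares a uniform cone property, and the Chenais compactness theorem \cite[Thm.~2.4.10]{HenrotPierre2018} yields, up to a subsequence, a compact set $\omega^\star \subset \overline{D_{\dcirc}}$ such that $\omega_n \to \omega^\star$ in the Hausdorff topology and $\chi_{\omega_n} \to \chi_{\omega^\star}$ in $L^1(D)$.

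Next, I would verify that $\omega^\star \in \smallOad$. The containment $\omega^\star \Subset D_{\dcirc}$ is inherited from the uniform separation built into the definition of $D_{\dcirc}$, and the connectedness of $D \setminus \overline{\omega^\star}$ follows from the corresponding convergence of complements. Inheriting $C^{1,1}$-regularity of $\partial \omega^\star$ is the delicate point: I would parametrize each $\partial \omega_n$ locally by $C^{1,1}$-graphs with uniformly bounded first derivatives, apply Arzel\`a--Ascoli to obtain a $C^{1,\alpha}$-convergent subsequence of parametrizations ($\alpha < 1$), and show that the uniform Lipschitz bound on the first derivatives is preserved in the limit, yielding a $C^{1,1}$ boundary.

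Continuity of the domain-to-state map comes next. Assumption~\ref{assumption:key_conditions}(iv) gives coercivity of $a(\cdot,\cdot)$ with a constant independent of $\varOmega_n$, so the data $(f,g)$ produce a uniform bound $\vertiii{u(\varOmega_n)}_{\spaceV(\varOmega_n)} \leq C$. Since the underlying spaces vary with $n$, I would transport everything to the reference configuration $\varOmega^\star$ via diffeomorphisms $T_n : \varOmega^\star \to \varOmega_n$ constructed from the convergent boundary parametrizations, with $T_n \to \mathrm{Id}$ in $W^{1,\infty}(\varOmega^\star; \mathbb{R}^d)$. The pulled-back sequence $\hat{u}_n := u(\varOmega_n) \circ T_n$ is then uniformly bounded in $\spaceV(\varOmega^\star)$, so a subsequence converges weakly there; passing to the limit in the pulled-back weak formulation, using the continuity of $\sigma$ and $\vect{b}$ on all of $D$, identifies the weak limit as the unique solution $u(\varOmega^\star)$ of \eqref{eq:weak_form_of_state}.

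Finally, the strong $\LL^2$-convergence of $\Im\{\hat{u}_n\}$ together with $|\det DT_n| \to 1$ uniformly gives
\[
J(\varOmega^\star) = \tfrac{1}{2} \intOstar{|\Im\{u(\varOmega^\star)\}|^2} = \lim_{n \to \infty} \tfrac{1}{2}\int_{\varOmega_n} |\Im\{u(\varOmega_n)\}|^2\,dx = m,
\]
so $\varOmega^\star \in \Oad$ is a minimizer. The principal obstacle is precisely this continuity step: both the state and the cost functional are defined on varying domains, so the argument relies crucially on the $C^{1,1}$-closedness of $\smallOad$ (needed to construct the diffeomorphisms $T_n$) and on the careful transport of the sesquilinear form $a(\cdot,\cdot)$ to the fixed reference domain.
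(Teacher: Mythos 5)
Your argument is correct in outline, but it takes a genuinely different route from the paper at the decisive step. The paper never transports the state to a reference domain: it exploits the homogeneous Dirichlet condition on $\varGamma$ to extend $u(\varOmega_n)$ by zero into $\omega_n$, obtains a uniform $\HH^1(D)$ bound on the extensions (Lemma~\ref{lem:uniform_boundedness_of_state}), extracts a weak limit in the fixed hold-all $D$, identifies it as the state on the limit domain using the $L^{\infty}$-weak-$\ast$ convergence $\chi_{\omega_n^{\mathsf{c}}} \to \chi_{\omega^{\mathsf{c}}}$ (to see that the limit vanishes on $\omega$), and upgrades to strong $\HH^1(D)$ convergence via coercivity and trace compactness (Proposition~\ref{prop:strong_convergence_of_extensions}); continuity of $J$ then follows. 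Your pullback strategy instead builds near-identity diffeomorphisms $T_n:\varOmega^{\star}\to\varOmega_n$ from $C^1$-convergent boundary parametrizations and passes to the limit in the transported weak form. This buys you something the paper's proof leaves implicit, namely an explicit argument that the limit boundary is again $C^{1,1}$, and it keeps the identification of the limit equation on a single fixed domain; the price is that you need much stronger convergence of the boundaries (graph convergence in $C^{1}$ with uniform Lipschitz bounds on the derivatives) and the nontrivial construction of $T_n$ with $T_n\to\mathrm{Id}$ in $W^{1,\infty}$, whereas the extension method only needs Hausdorff convergence and works precisely because $u=0$ on the unknown boundary.

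One caveat you should make explicit: the uniform bounds you invoke (a common cone parameter for Chenais' theorem, and uniformly bounded $C^{1,1}$ norms of the local graphs for the Arzel\`a--Ascoli step) are not contained in the definition \eqref{eq:set_of_admissible_obstacles} of $\smallOad$, which only asks each $\omega$ to be $C^{1,1}$; without a uniform modulus, neither the compactness of the class nor the $C^{1,1}$ regularity of the limit follows. The paper's own appeal to the $\epsilon$-cone property carries the same implicit uniformity assumption, so this is a shared gap rather than a defect specific to your argument, but in your route it is load-bearing twice (compactness of the minimizing sequence and construction of the diffeomorphisms), so it deserves a sentence. Similarly, "connectedness of $D\setminus\overline{\omega^{\star}}$ follows from convergence of complements" needs the standard stability result for Hausdorff convergence of the complementary open sets under a uniform cone condition, not just a one-line assertion.
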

To prove Proposition~\ref{prop:existence_of_optimal_solution}, we require certain properties of the admissible domains. 
A sufficient condition is uniform regularity (see \cite{Chenais1975} or \cite[Chap.~5.6.4]{DelfourZolesio2011}), known as the $\epsilon$-property \cite{AfraitesRabago2024}, which holds since $\varOmega \in \Oad \subset C^{1,1}$ \cite[Thm.~2.4.7, p.~56]{HenrotPierre2018}.
Equipping $\Oad$ with the Hausdorff distance \cite[Def.~2.2.7, p.~30]{HenrotPierre2018}, existence of an optimal solution follows if: (i) $\Oad$ is compact; (ii) $\varOmega_n \to \varOmega$ in the Hausdorff sense implies $u(\varOmega_n) \to u(\varOmega)$ \cite[Def.~2.2.8, p.~30]{HenrotPierre2018}; and (iii) under (i) and (ii), $J(\varOmega_n, u(\varOmega_n)) \to J(\varOmega, u(\varOmega))$. 
The proof then proceeds based on these arguments.

On $\varGamma$, recall that $u = 0$. 
We define $\extu \in \HH^{1}_{\omega}(D) := \{\psi \in \HH^{1}(D) \mid \psi|_\omega = 0\}$ as the $\HH^1$-smooth extension of $u \in \spaceV(\varOmega)$ by zero in $D$ \cite{AdamsFournier2003,HenrotPierre2018}:
\begin{equation}\label{eq:zero_extension}
   \extu := \mathcal{E}u : \HH^{1}(\varOmega) \to \HH^{1}_{\omega}(D), \qquad
    \mathcal{E}u(x) =
        \begin{cases}
            u(x), & x \in \varOmega = D \setminus \overline{\omega}, \\
            0, & x \in \overline{\omega}.
        \end{cases}
\end{equation} 
The extension operator $\mathcal{E}$ is linear and bounded.  
Endowing $\HH^{1}_{\omega}(D)$ with the norm $\vertiii{\cdot}_{\HH^{1}_{\omega}(D)} := \vertiii{\nabla \cdot}_{\LL^2(D)}$, it can be shown that this norm is equivalent to the usual $\HH^1$-norm: $\vertiii{\varphi}_{\HH^{1}_{\omega}(D)} \sim \vertiii{\varphi}_{\HH^1(D)}$ for all $\varphi \in \HH^{1}_{\omega}(D)$.

Using the extension \eqref{eq:zero_extension}, the weak form \eqref{eq:weak_form_of_state} becomes
\begin{equation}\label{eq:extended_weak_form_of_state}
\tilde{a}(\extu, \psi) = \intD{\sigma \nabla \extu \cdot \nabla \overline{\psi}}
+ \intD{(\vect{b} \cdot \nabla \extu) \overline{\psi}}
+ i \intS{\extu \overline{\psi}}
= \intS{(g+if) \overline{\psi}},
\end{equation}
valid for all $\psi \in \HH^{1}_{\omega}(D)$, with $\extu \in \HH^{1}_{\omega}(D)$. Under Assumption~\ref{assumption:key_conditions}(iv), well-posedness follows from the complex-valued Lax-Milgram Theorem.

Lemma~\ref{lem:uniform_boundedness_of_state} below establishes that $\extu$ is bounded in $\HH^1(D)$. This is used in Proposition~\ref{prop:strong_convergence_of_extensions} to show that $u(\varOmega_n) \to u(\varOmega)$ as $\varOmega_n \xrightarrow{H} \varOmega$ in the Hausdorff sense. 
Proposition~\ref{prop:continuity_the_cost_function} then ensures the continuity of the cost functional, i.e., $\lim_{n \to \infty} J(\varOmega_n, u(\varOmega_n)) = J(\varOmega, u(\varOmega))$.
For notational convenience, we set $u_n := u(\varOmega_n)$ and let $\extu_n \in \HH^1(D)$ denote its zero extension to $D$, for all $n \in \mathbb{N} \cup \{0\}$.
\begin{lemma}\label{lem:uniform_boundedness_of_state}
Under Assumption~\ref{assumption:key_conditions}(iv), for any $(\varOmega, u) \in \Oad \times \spaceV(\varOmega)$, there exists a constant $c^{\star} > 0$ such that $\vertiii{\extu}_{\HH^1(D)} \leqslant c^{\star}$, where $\extu$ is the zero extension of $u$ satisfying \eqref{eq:extended_weak_form_of_state}.
\end{lemma}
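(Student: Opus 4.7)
The plan is to derive an energy estimate directly from the extended weak formulation~\eqref{eq:extended_weak_form_of_state}, using coercivity provided by Assumption~\ref{assumption:key_conditions}(iv), and then lift it to a bound in the full $\HH^1(D)$-norm via the equivalence $\vertiii{\cdot}_{\HH^{1}_{\omega}(D)} \sim \vertiii{\cdot}_{\HH^1(D)}$ noted after~\eqref{eq:zero_extension}.

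First, I would test~\eqref{eq:extended_weak_form_of_state} with $\psi = \extu \in \HH^{1}_{\omega}(D)$ and take the real part. The imaginary Robin term $i\intS{|\extu|^2}$ then disappears, leaving
\[
\intD{\sigma |\nabla \extu|^{2}} + \Re\!\intD{(\vect{b}\cdot\nabla \extu)\overline{\extu}} = \Re\!\intS{(g+if)\overline{\extu}}.
\]
The ellipticity assumption~(i) gives $\intD{\sigma|\nabla\extu|^2} \geqslant \sigma_0 \vertiii{\nabla \extu}_{\LL^{2}(D)}^{2}$. For the advection term, Cauchy-Schwarz and assumption~(ii) yield
\[
\left|\Re\!\intD{(\vect{b}\cdot\nabla\extu)\overline{\extu}}\right| \leqslant \supb \, \vertiii{\nabla\extu}_{\LL^{2}(D)} \vertiii{\extu}_{\LL^{2}(D)} \leqslant c_P \supb \, \vertiii{\nabla\extu}_{\LL^{2}(D)}^{2},
\]
where $c_P$ is the Poincaré--Friedrichs constant on $\HH^{1}_{\omega}(D)$; crucially, $c_P$ can be chosen \emph{uniformly} in $\omega \in \smallOad$, because every admissible obstacle is contained in the fixed set $D_{\dcirc} \Subset D$ and enjoys a uniform $C^{1,1}$ regularity via the $\epsilon$-cone property. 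By Assumption~\ref{assumption:key_conditions}(iv), $\sigma_0 - c_P \supb > 0$, so the left-hand side controls $\vertiii{\nabla \extu}_{\LL^{2}(D)}^{2}$.

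Next, I would bound the right-hand side using Cauchy--Schwarz on $\varSigma$ together with the trace inequality from $\HH^{1}(D)$ to $\LL^{2}(\varSigma)$. Since $\varSigma = \partial D$ is fixed and independent of $\omega$, the trace constant $c_{\mathrm{tr}}$ is an absolute constant, giving
\[
\left|\Re\!\intS{(g+if)\overline{\extu}}\right| \leqslant \bigl(\|g\|_{L^2(\varSigma)} + \|f\|_{L^2(\varSigma)}\bigr) c_{\mathrm{tr}} \vertiii{\extu}_{\HH^{1}(D)}.
\]
Combining the two estimates and applying the norm equivalence $\vertiii{\extu}_{\HH^1(D)} \leqslant c' \vertiii{\nabla\extu}_{\LL^2(D)}$ (again with $c'$ uniform in $\omega$) produces an inequality of the form $\vertiii{\extu}_{\HH^1(D)}^{2} \leqslant C \vertiii{\extu}_{\HH^1(D)}$, from which the desired bound follows with a constant $c^{\star}$ depending only on $\sigma_0$, $\supb$, $c_P$, $c_{\mathrm{tr}}$, $\dcirc$, $f$, and $g$, but not on $\omega \in \smallOad$.

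The delicate step, and the one I would spend the most care on, is justifying the \emph{uniformity} of $c_P$ and of the norm-equivalence constant over the whole admissible class $\smallOad$. This is what ultimately powers the compactness argument used later in Proposition~\ref{prop:strong_convergence_of_extensions}. The remaining ingredients (coercivity, trace inequality on the fixed outer boundary, Cauchy--Schwarz) are routine; the advection term is benign precisely because Assumption~\ref{assumption:key_conditions}(iv) is calibrated to beat $c_P$.
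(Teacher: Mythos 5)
Your proposal is correct and follows essentially the same route as the paper: test \eqref{eq:extended_weak_form_of_state} with $\psi=\extu$, take the real part so the Robin term drops, use the generalized Poincar\'e--Friedrichs coercivity $\sigma_0-c_P\supb>0$ from Assumption~\ref{assumption:key_conditions}(iv), bound the boundary term by a trace estimate, and conclude via the equivalence $\vertiii{\cdot}_{\HH^1_{\omega}(D)}\sim\vertiii{\cdot}_{\HH^1(D)}$. The only cosmetic difference is that you bound the data term with $L^2(\varSigma)$ norms and a trace constant while the paper uses the $H^{-1/2}/H^{1/2}$ duality pairing with the Sobolev trace embedding; your emphasis on the uniformity of $c_P$ over $\smallOad$ is exactly what the paper's citation of the Boulkhemair--Chakib inequality is meant to supply.
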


\begin{proof}
We denote $\norm{(f,g)}:=\norm{g}_{H^{-1/2}(\varSigma)} + \norm{f}_{H^{1/2}(\varSigma)}$.
Then, by taking $\psi = \extu \in \HH^{1}_{\omega}(D)$ in \eqref{eq:extended_weak_form_of_state}, and using the continuous dependence of the state on the data, we obtain
$\tilde{a}(\extu,\extu) \leqslant \norm{(f,g)} \vertiii{\extu}_{\HH^{1/2}(\varSigma)}$.
Since $\extu \in \HH^{1}_{\omega}(D)$ (i.e., $\extu = 0$ in $\overline{\omega}$), the generalized Poincar\'{e}-Friedrichs inequality \cite{BoulkhemairChakib2007} gives 
$|\tilde{a}(\extu,\extu)| \geqslant \Re\{\tilde{a}(\extu,\extu)\} 
%\geqslant (\sigma_0 \, \vertiii{\nabla \extu}_{\LL^2(D)^d}^2 - c_P \supb \, \vertiii{\nabla \extu}_{\LL^2(D)^d}^2) 
\geqslant (\sigma_0 - c \, \supb) \, \vertiii{\extu}_{\HH^1_{\omega}(D)}^2$, where $c > 0$ may depend on $D$, $\dcirc$, $\underline{d}$ and $\overline{d}$, but is independent of $\Omega$.
Then, choosing $c = c_P^{-1}$ in Assumption~\ref{assumption:key_conditions}(iv), applying the Sobolev embedding $\vertiii{\cdot}_{\HH^{s-1/2}(\varSigma)} \leqslant c_s \, \vertiii{\cdot}_{\HH^s(D)}$ for $s > 1/2$, and using the equivalence $\vertiii{\extu}_{\HH^1_{\omega}(D)} \sim \vertiii{\extu}_{\HH^1(D)}$, we obtain $\vertiii{\extu}_{\HH^1(D)} \leqslant c^\star$, where
$c^\star := c_s (\sigma_0 - c_P \supb)^{-1}\norm{(f,g)}$.
This concludes the proof.
\end{proof}
%
%

%%%\begin{remark}
%%%It is natural to assume that the boundaries do not behave too wildly or oscillate excessively. 
%%%Otherwise, constants such as the Poincaré constant may depend on $n$, and convergence within 
%%%the space of admissible domains may be lost. Therefore, it is reasonable to assume (as we do 
%%%implicitly here) that the free boundaries of admissible domains are parametrized in a way that 
%%%the corresponding functions are \emph{uniformly Lipschitz}, and thus the domains enjoy the 
%%%$\varepsilon$-cone property.
%%%\end{remark}

\begin{remark}
To avoid excessively wild or oscillating boundaries---which could cause constants such as the Poincar\'{e} constant to depend on $n$ (the index of a sequence) and compromise convergence in the space of admissible domains---it is natural to assume, as we do implicitly here, that the free boundaries are parametrized by uniformly Lipschitz functions. This assumption guarantees that the domains satisfy the $\varepsilon$-cone property, which is fundamental for ensuring convergence of domains; see \cite{Chenais1975,HenrotPierre2018} for a more rigorous discussion.
\end{remark}

%================================================
% STRONG CONVERGENCE OF EXTENSIONS
%================================================
\begin{proposition}\label{prop:strong_convergence_of_extensions}
Let $\{\varOmega_{n}\}_{n\in\mathbb{N}} \subset \Oad$ and $\varOmega \in \Oad$ be such that $\varOmega_{n} \xrightarrow{H} \varOmega$ (i.e., \textit{$\Oad$ is compact in the Hausdorff metric}).  
Then there exists a subsequence $\{\extu_{n_{k}}\}$ of $\{\extu_{n}\}$, where each $\extu_{n}$ solves \eqref{eq:extended_weak_form_of_state} in ${\HH_{\omega_{n}}^{1}(D)}$, for all $\psi \in {\HH_{\omega_{n}}^{1}(D)}$, and a function $u^{\star} \in \HH^{1}_{\omega}(D)$ such that:
    \begin{itemize}
    \renewcommand{\labelitemi}{$\bullet$}
        \item $\extu_{n_{k}} \rightharpoonup u^{\star}$ in $\HH^{1}(D)$,
        \item $u^{\star} \in \HH^{1}_{\omega}(D)$ satisfies \eqref{eq:extended_weak_form_of_state}, for all $\psi \in \HH^{1}_{\omega}(D)$,
        \item $u^{\star}|_{\varOmega} \in {\spaceV}(\varOmega)$ uniquely solves \eqref{eq:weak_form_of_state}, and
        \item $\extu_{n_{k}} \rightarrow u^{\star}$ in $\HH^{1}(D)$.
    \end{itemize}
\end{proposition}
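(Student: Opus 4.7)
The plan is to follow the now classical compactness strategy for shape continuity of solutions to elliptic PDEs, adapted to the complex-valued CCBM setting with variable coefficients.

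\textbf{Step 1: weak compactness.} By Lemma~\ref{lem:uniform_boundedness_of_state}, the family $\{\extu_n\}$ is uniformly bounded in $\HH^1(D)$. Since $\HH^1(D)$ is reflexive, one can extract a (non-relabelled) subsequence $\{\extu_{n_k}\}$ and a limit $u^\star \in \HH^1(D)$ with $\extu_{n_k} \rightharpoonup u^\star$ in $\HH^1(D)$ and, by the Rellich--Kondrachov theorem, $\extu_{n_k} \to u^\star$ strongly in $\LL^2(D)$ and in $\LL^2(\varSigma)$ via compact trace.

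\textbf{Step 2: the limit lives in $\HH^1_\omega(D)$.} Since $\extu_{n_k} = 0$ on $\overline{\omega_{n_k}}$, one has to verify that $u^\star = 0$ on $\overline{\omega}$. This is the place where the regularity of $\Oad$ really matters: the uniform $\epsilon$-cone / $C^{1,1}$ property of the admissible domains together with $\varOmega_{n_k} \xrightarrow{H} \varOmega$ yields convergence of $\omega_{n_k}$ to $\omega$ in the sense of characteristic functions (as invoked in the existence arguments of \cite{Chenais1975,HenrotPierre2018}). Combined with the strong $\LL^2(D)$-convergence, this forces $u^\star \equiv 0$ on $\overline{\omega}$, so $u^\star \in \HH^1_\omega(D)$.

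\textbf{Step 3: passing to the limit in the variational identity.} Here is the most delicate point, because the test space itself depends on $n$. The standard remedy, which I would follow, is: given $\psi \in \HH^1_\omega(D)$, construct a sequence of admissible test functions $\psi_k \in \HH^1_{\omega_{n_k}}(D)$ with $\psi_k \to \psi$ in $\HH^1(D)$ (possible thanks to the uniform $\epsilon$-cone property and density of smooth compactly supported functions in $D\setminus\overline{\omega}$). Inserting $\psi_k$ in \eqref{eq:extended_weak_form_of_state} for $\extu_{n_k}$ and passing to the limit, using weak convergence of $\nabla \extu_{n_k}$ against the strong test $\nabla \psi_k$, boundedness of $\sigma$ and $\vect b$, and the compact trace on $\varSigma$, yields $\tilde a(u^\star,\psi) = \intS{(g+if)\overline{\psi}}$ for every $\psi \in \HH^1_\omega(D)$.

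\textbf{Step 4: identification and strong convergence.} By the complex Lax--Milgram theorem applied on $\HH^1_\omega(D)$ (coercivity under Assumption~\ref{assumption:key_conditions}(iv) as in \eqref{eq:extended_weak_form_of_state}), the solution is unique, so $u^\star|_\varOmega$ must coincide with the unique solution of \eqref{eq:weak_form_of_state}. To upgrade to strong convergence in $\HH^1(D)$, I would test \eqref{eq:extended_weak_form_of_state} for $\extu_{n_k}$ with $\psi = \extu_{n_k}$, take real parts, and exploit
\[
(\sigma_0 - c_P\supb)\,\vertiii{\extu_{n_k}-u^\star}_{\HH^1_\omega(D)}^2 \leqslant \Re\bigl(\tilde a(\extu_{n_k}-u^\star,\extu_{n_k}-u^\star)\bigr),
\]
expand, and pass to the limit: the quadratic term $\Re(\tilde a(\extu_{n_k},\extu_{n_k})) = \Re(l(\extu_{n_k}))$ converges to $\Re(l(u^\star)) = \Re(\tilde a(u^\star,u^\star))$ by strong $\LL^2(\varSigma)$-convergence of traces, while the cross terms converge by weak-strong pairing. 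The coercivity then forces $\vertiii{\extu_{n_k}-u^\star}_{\HH^1(D)} \to 0$.

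The main obstacle is Step~3, i.e., constructing admissible test functions $\psi_k \in \HH^1_{\omega_{n_k}}(D)$ that strongly approximate a given $\psi \in \HH^1_\omega(D)$; this is where the uniform regularity built into $\Oad$ is essential and cannot be bypassed.
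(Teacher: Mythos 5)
Your proposal is correct and follows essentially the same route as the paper: uniform boundedness from Lemma~\ref{lem:uniform_boundedness_of_state} giving a weak $\HH^1(D)$ limit, approximation of a fixed test function $\psi\in\HH^{1}_{\omega}(D)$ by admissible test functions in the moving spaces $\HH^{1}_{\omega_{n_k}}(D)$ (the step the paper also relies on, via the uniform regularity of $\Oad$), vanishing of the limit on $\omega$ through characteristic-function convergence plus strong $\LL^2(D)$ convergence, and an energy-plus-compact-trace argument for strong $\HH^1(D)$ convergence. The only difference is cosmetic: in the last step you expand $\Re\bigl(\tilde a(\extu_{n_k}-u^{\star},\extu_{n_k}-u^{\star})\bigr)$ and use $\tilde a(\extu_{n_k},\extu_{n_k})=\tilde l(\extu_{n_k})$ and $\tilde a(u^{\star},u^{\star})=\tilde l(u^{\star})$, whereas the paper subtracts the two variational identities tested against $w_n=\extu_n-u^{\star}$; your variant is equally valid (and avoids testing each problem with a function outside its own test space).
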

\begin{proof}
Let $\{\varOmega_{n}\}_{n\in\mathbb{N}} \subset \Oad$ and $\varOmega \in \Oad$ be such that $\varOmega_{n} \xrightarrow{H} \varOmega$ (i.e., \textit{$\Oad$ is compact in the Hausdorff metric}).
Lemma~\ref{lem:uniform_boundedness_of_state} implies that $\vertiii{\extu_{n}}_{\HH^{1}(D)} \leqslant c^{\star}$, hence the existence of a sequence $\{\extu_{n}\}_{n\in\mathbb{N}}$ uniformly bounded in $\HH^{1}(D)$.
Consequently, we can extract a subsequence denoted by $\{\extu_{k}\}_{k\in\mathbb{N}}=\{\extu_{n_{k}}\}_{k\in\mathbb{N}} \subset \{\extu_{n}\}_{n\in\mathbb{N}}$ such that $\extu_{n_{k}} \rightharpoonup u^{\star}$ in $\HH^{1}(D)$.
We let $\psi \in \spaceV(\varOmega)$ and, by \eqref{eq:zero_extension}, denotes its zero extension in $D$ by $\tilde{\psi}$.
We construct, based on \eqref{eq:zero_extension}, a sequence $\{\psi_{k}\}_{k \in \mathbb{N}} \subset {\HH_{\omega_{k}}^{1}(D)}$, $\{\omega_{k}\}_{k \in \mathbb{N}} = \{\omega_{n_{k}}\}_{k \in \mathbb{N}} \in \smallOad$, such that $\psi_{k}  \xrightarrow{k \to \infty} \tilde{\psi}$ in $\HH^{1}_{\omega}(D)$. 

Because $\varOmega_{n} \xrightarrow{H} \varOmega$, then there exists an index $p\in \mathbb{N}$ such that, and for all $ n\geqslant p$, we have
\[
    \intOn{\sigma\nabla {u}_{n} \cdot \nabla \overline{\psi}_{k}}
    	+ \intOn{(\vect{b} \cdot \nabla {u}_{n}) \overline{\psi}_{k}}
    	+ i\intS{{u}_{n} \overline{\psi}_{k}}
    =\intS{{(g+if) \overline{\psi}_{k}}},
\]
for all $\psi_{k}|_{\varOmega_{n}} \in \HH^{1}({\varOmega_{n}})$.
For each $n \in \mathbb{N}$, we can extend ${u}_{n} \in \HH^{1}(\varOmega_{n})$ to $D$ using \eqref{eq:zero_extension} to obtain $\extu_{n} \in \HH^{1}_{\omega_{n}}(D)$ satisfying the variational equation $\tilde{a}(\extu_{n}, \psi_{k}) = \intS{{(g+if) \overline{\psi}_{k}}}$, for all ${\psi}_{k} \in \HH^{1}_{\omega_{n}}(D)$.
Taking another subsequence if necessary, we know that $u_{n} \rightharpoonup u^{\star}$ in $\HH^{1}_{\omega}(D)$, and so $\nabla{u}_{n} \rightarrow \nabla{u}^{\star}$ in $\HH^{1}_{\omega}(D)$.
This allows us to get, because $\varOmega_{n} \xrightarrow{H} \varOmega$, another variational equation $\tilde{a}(u^{\star}, \psi_{k}) 
	= \intS{{(g+if) \overline{\psi}_{k}}} 
	=: \tilde{l}(\psi_{k})$, for all ${\psi}_{k} \in \HH^{1}_{\omega}(D)$.
Therefore, using the compactness of the trace operator from $\HH^1(D)$ to $\LL^2(\varSigma)$, the Sobolev embedding $\vertiii{\cdot}_{\HH^{s-1/2}(\varSigma)} \leqslant c_s \, \vertiii{\cdot}_{\HH^s(D)}$ for $s > 1/2$, and Lemma~\ref{lem:uniform_boundedness_of_state}, we obtain the following convergences:
\begin{equation}\label{eq:convergences_of_forms}
\left\{\quad
\begin{aligned}
    \abs{\tilde{a}(u^{\star},\psi_{k}) - \tilde{a}(u^{\star},\tilde{\psi})}
%%%    = \abs{\tilde{a}(u^{\star},\psi_{k} - \tilde{\psi}) }
    &\leqslant 
    c^{\star} \left( \sup_{D} \abs{\sigma} + \supb + c \right) \vertiii{\psi_{k}-\tilde{\psi}}_{\HH^{1}_{\omega}(D)}
    \xrightarrow{k \to \infty} 0,\\
    \abs{\tilde{l}(\psi_{k}) - \tilde{l}(\tilde{\psi})} 
    &\leqslant 
    cc_{s} \norm{(f,g)}\vertiii{\psi_{k}-\tilde{\psi}}_{\HH^{1}_{\omega}(D)}
    \xrightarrow{k \to \infty} 0,
\end{aligned}
\right.
\end{equation}
for some constant $c > 0$.
This means that $\tilde{a}(u^{\star},\psi_{k}) \xrightarrow{k \to \infty} \tilde{a}(u^{\star},\tilde{\psi})$ and $\tilde{l}(\psi_{k}) \xrightarrow{k \to \infty} \tilde{l}(\tilde{\psi})$.
Thus, $\tilde{a}(u^{\star}, \tilde{\psi})= \tilde{l}(\tilde{\psi})$, for all $\tilde{\psi} \in \HH^{1}_{\omega}(D)$.
The convergences in \eqref{eq:convergences_of_forms} and the equivalence $\vertiii{\varphi}_{\HH^{1}_{\omega}(D)}  \sim \vertiii{\varphi}_{\HH^{1}(D)} $ for functions $\varphi \in {\HH^{1}_{\omega}(D)}$ also tell us that $u^{\star} \in \HH^{1}(D)$ satisfies
\begin{equation}\label{eq:extended_weak_form_limit}
    \tilde{a}(u^{\star}, \tilde{\psi})= \tilde{l}(\tilde{\psi}),
    \qquad \forall \tilde{\psi} \in \HH^{1}(D).
\end{equation}

Observe that, by restricting the functions in ${\varOmega}$, we can verify that $u^{\star}|_{\varOmega} \in \spaceV(\varOmega)$.
Indeed, let us write $\omega^{\textsf{c}} = D\setminus \varOmega$ and $\omega_{n}^{\textsf{c}} = D\setminus \varOmega_{n}$.
We observe that $\intDOn{\abs{{\extu_{n}}}^{2}}=0$ and $\intDOn{\abs{{\extu_{n}}}^{2}} \longrightarrow \intDO{\abs{{u^{\star}_{n}}}^{2}}$.
Hence,
\begin{align*}
    \abs{
    \intDOn{\abs{{\extu_{n}}}^{2}}
    -\intDO{\abs{{u^{\star}}}^{2}}
    } 
    &\leqslant 
    \abs{\intD{(\chion-\chio)
    \abs{{u^{\star}}}^{2}}
    }\\
    &\qquad +\abs{
    \intD{\chion
    (\abs{{\extu_{n}}}^{2}-\abs{{u^{\star}}}^{2})}
    }
    =: \Psi_{1}^{n} + \Psi_{2}^{n}.
\end{align*} 
Since $\abs{{u^{\star}}}^{2} \in \LL^{1}(D)$ and $\chion\longrightarrow \chio$ in $L^{\infty}(D)$-weak-$\ast$ (see \cite[Prop.~2.2.28, p.~45]{HenrotPierre2018} or \cite[Eq.~(29)]{AfraitesRabago2024}), then $\Psi_{1}^{n} \xrightarrow{n \to \infty} 0$.
%%%\[
%%%    \abs{
%%%        \intD{(\chion
%%%        -\chio)\abs{{u^{\star}}}^{2}}
%%%    }
%%%    \xrightarrow{n \to \infty} 0.
%%%\]
Because $\extu_{n} \rightharpoonup {u^{\star}} $ in $\HH^{1}(D)$ and $\HH^{1}(D) \hookrightarrow \hookrightarrow \LL^{2}(D)$, we infer that $\extu_{n} \rightarrow {u^{\star}}$ in $\LL^{2}(D)$. 
\sloppy By Lemma~\ref{lem:uniform_boundedness_of_state}, we also see that $\Psi_{2}^{n} \leqslant  \intD{ \abs{ \abs{{\extu_{n}}}^{2} - \abs{{u^{\star}}}^{2} } } \xrightarrow{n \to \infty} 0$.
Therefore $\intDO{\abs{{u^{\star}}}^{2}}=0$, and we conclude that $u^{\star} \in \spaceV({\varOmega})$.
Now, going back to \eqref{eq:extended_weak_form_limit}, we see that
\begin{equation}\label{eq:restricted_weak_form_limit}
    a(u^{\star}|_{\varOmega}, \psi)= l(\psi),
    \qquad \forall \psi = \tilde{\psi}|_{\varOmega} \in \spaceV(\varOmega).
\end{equation}

Finally, we will show the strong convergence $\extu_{n} \rightarrow {u^{\star}}$ in $\HH^{1}(D)$.
To do this, let us denote $w_{n} = \extu_{n} - u^{\star}$.
On the one hand, from \eqref{eq:extended_weak_form_limit}, with $\psi = w_{n} = \extu_{n} - u^{\star} \in \HH^{1}(D)$, we have
\begin{equation}\label{eq:first_equation}
%%%	\tilde{a}(u^{\star}, \extu_{n} - u^{\star})= \tilde{l}(\extu_{n} - u^{\star}).
		\tilde{a}(u^{\star}, w_{n})= \tilde{l}(w_{n}).
\end{equation}
On the other hand, we also have that
\begin{align*}
&\intOn{\sigma \nabla{u}_n \cdot \nabla \overline{(w_n|_{\varOmega_n})}} 
+ \intOn{(\vect{b} \cdot \nabla{u}_n) \overline{(w_n|_{\varOmega_n})}} 
+ i \intS{u_n \overline{(w_n|_{\varOmega_n})}} \\
&\qquad = \intS{(g+if) \overline{(w_n|_{\varOmega_n})}}.
\end{align*}
Extending all functions to $D$ via \eqref{eq:zero_extension}, we get
%
%
%%%%%\begin{equation}\label{eq:second_equation}
%%%%%    \intD{\sigma\nabla {\extu}_{n} \cdot \nabla \overline{w}_{n} }
%%%%%    	+ \intD{(\vect{b} \cdot \nabla {\extu}_{n}) \overline{w}_{n}}
%%%%%   	+ i\intS{{\extu}_{n} \overline{w}_{n}}
%%%%%     	=\intS{{(g+if) \overline{w}_{n}}}.
%%%%%\end{equation}
\begin{equation}\label{eq:second_equation}
	\tilde{a}({\extu}_{n}, w_{n})= \tilde{l}(w_{n}).
\end{equation}
Subtracting \eqref{eq:first_equation} from \eqref{eq:second_equation}, we get
\[
	\Psi({w}_{n}) := \intD{\sigma\nabla {w}_{n} \cdot \nabla \overline{w}_{n}}
    + \intD{\vect{b} \cdot \nabla {w}_{n} \overline{w}_{n}}
    = - i\intS{{w}_{n} \overline{w}_{n}}.
\]
Clearly, because ${w}_{n} = \extu_{n} - u^{\star}\in \HH^{1}(D)$ vanishes on a subdomain of $D$, we can apply a uniform Poincar\'{e}-Friedrich's inequality to obtain
\[
	0 \leqslant c(\sigma_{0} - c_{P} \supb) \vertiii{ \extu_{n} - u^{\star} }_{\HH^{1}(D)}^{2}
%%%	\leqslant (\sigma_{0} - c_{P} \supb) \vertiii{ \nabla {w}_{n} }_{\LL^{2}(D)^{d}}^{2}\\
%%%	\leqslant \abs{\intD{\sigma\nabla {w}_{n} \cdot \nabla \overline{w}_{n}} + \intD{\vect{b} \cdot \nabla {w}_{n} \overline{w}_{n}}}\\
	\leqslant \abs{\Psi({w}_{n})}
	\leqslant \vertiii{ \extu_{n} - u^{\star} }_{\LL^{2}(\varSigma)}^{2},
\]
for some constant $c > 0$.

Using the compactness of the trace operator from $\HH^{1}(D)$ into $\LL^{2}({\varSigma})$, we can extract a subsequence, still denoted by
$\{\extu_{n}\}_{n \in \mathbb{N}}$, such that $\extu_{n}|_\varSigma \rightharpoonup  u^{\star}|_\varSigma $ in $\LL^{2}({\varSigma})$. 
Consequently, we get
\[
	0 \leqslant \vertiii{ \extu_{n} - u^{\star} }_{\HH^{1}(D)}^{2}
	\leqslant \dfrac{1}{c(\sigma_{0} - c_{P} \supb)} \vertiii{ \extu_{n} - u^{\star} }_{\LL^{2}(\varSigma)}^{2} 
	 \xrightarrow{n \to \infty} 0.
\]
In conclusion, $\extu_{n}$ converges strongly to $u^{\star}$ in $\HH^{1}(D)$.
This completes the proof of the proposition.
\end{proof}
\begin{proposition}\label{prop:continuity_the_cost_function}
Under the assumptions of Proposition~\ref{prop:strong_convergence_of_extensions}, we have \[ \lim_{n \to +\infty} J(\varOmega_n) = J(\varOmega). \]
\end{proposition}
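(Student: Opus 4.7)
The plan is to reduce the claim to strong $\LL^2(D)$-convergence of the extended solutions, which has essentially been established in Proposition~\ref{prop:strong_convergence_of_extensions}. First, I would exploit the zero-extension structure already in place: since $\extu_n$ and $u^\star$ vanish on $\overline{\omega_n}$ and $\overline{\omega}$ respectively, the two cost values admit the fixed-domain representations
\[
J(\varOmega_n) = \tfrac{1}{2}\vertiii{\Im\{\extu_n\}}_{\LL^2(D)}^{2}, \qquad J(\varOmega) = \tfrac{1}{2}\vertiii{\Im\{u^\star\}}_{\LL^2(D)}^{2}.
\]
This already circumvents the moving-domain difficulty, so no splitting of the type $\Psi_1^n + \Psi_2^n$ used in the proof of Proposition~\ref{prop:strong_convergence_of_extensions} is required here.

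Next, I would estimate the difference via the elementary scalar identity $|a|^2 - |b|^2 = (a-b)(a+b)$ applied to $a = \Im\{\extu_n\}$ and $b = \Im\{u^\star\}$, together with $|\Im\{z\}| \leqslant |z|$ and the Cauchy--Schwarz inequality, arriving at a bound of the form
\[
|J(\varOmega_n) - J(\varOmega)| \leqslant \tfrac{1}{2}\,\vertiii{\extu_n - u^\star}_{\LL^2(D)}\bigl(\vertiii{\extu_n}_{\LL^2(D)} + \vertiii{u^\star}_{\LL^2(D)}\bigr).
\]
The second factor is uniformly bounded by Lemma~\ref{lem:uniform_boundedness_of_state}, while the first factor tends to zero through the strong $\HH^1(D)$-convergence $\extu_n \to u^\star$ delivered by Proposition~\ref{prop:strong_convergence_of_extensions}, combined with the compact embedding $\HH^1(D) \hookrightarrow\hookrightarrow \LL^2(D)$. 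Since the strong convergence in Proposition~\ref{prop:strong_convergence_of_extensions} is stated along a subsequence, a standard extraction argument based on the uniqueness of $u^\star|_\varOmega \in \spaceV(\varOmega)$ as weak solution of~\eqref{eq:weak_form_of_state} promotes this to convergence of the full sequence, yielding $\lim_{n\to\infty} J(\varOmega_n) = J(\varOmega)$.

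No genuine obstacle is anticipated: the delicate moving-boundary analysis has been carried out in Proposition~\ref{prop:strong_convergence_of_extensions}, and continuity of a quadratic $\LL^2$-functional under strong $\LL^2$-convergence is a routine matter. The only point that merits explicit care is the initial passage to the fixed reference set $D$ via the zero-extension operator $\mathcal{E}$; once this reduction is made, everything else is a direct application of Cauchy--Schwarz and the uniform bound from Lemma~\ref{lem:uniform_boundedness_of_state}.
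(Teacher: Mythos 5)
Your argument is correct, and since the paper omits its own proof (stating only that it "follows similar arguments" to Proposition~\ref{prop:strong_convergence_of_extensions}), your write-up is a legitimate completion of it — with two small departures worth noting. First, the "similar arguments" the paper alludes to are the characteristic-function manipulations of the type $\Psi_1^n+\Psi_2^n$, i.e.\ writing $\int_{\varOmega_n}|\imagu_n|^2 = \intD{\chi_{\varOmega_n}|\Im\{\extu_n\}|^2}$ and invoking $\chi_{\varOmega_n}\to\chi_{\varOmega}$ weak-$\ast$ in $L^\infty(D)$ together with strong $\LL^2(D)$ convergence; you correctly observe that this splitting is superfluous here because $\extu_n$ and $u^\star$ already vanish on $\overline{\omega_n}$ and $\overline{\omega}$ respectively, so both cost values are exactly fixed-domain integrals over $D$ and the elementary bound $|J(\varOmega_n)-J(\varOmega)|\leqslant \tfrac12\vertiii{\extu_n-u^\star}_{\LL^2(D)}\bigl(\vertiii{\extu_n}_{\LL^2(D)}+\vertiii{u^\star}_{\LL^2(D)}\bigr)$ suffices, with the second factor controlled by Lemma~\ref{lem:uniform_boundedness_of_state}. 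Second, you explicitly upgrade the subsequential convergence of Proposition~\ref{prop:strong_convergence_of_extensions} to full-sequence convergence of $J(\varOmega_n)$ via the usual "every subsequence has a further subsequence converging to the same limit" argument, which is legitimate because the limit $u^\star$ is uniquely determined: it is the zero extension of the unique solution $u(\varOmega)\in\spaceV(\varOmega)$ of \eqref{eq:weak_form_of_state}. The paper states the proposition for the full sequence without addressing this point, so your version is, if anything, slightly more careful; the only ingredient you should state explicitly rather than implicitly is the identification $u(\varOmega)=u^\star|_{\varOmega}$, which is precisely the third bullet of Proposition~\ref{prop:strong_convergence_of_extensions} and justifies writing $J(\varOmega)=\tfrac12\vertiii{\Im\{u^\star\}}_{\LL^2(D)}^2$. (Minor cosmetic point: strong $\HH^1(D)$ convergence already implies strong $\LL^2(D)$ convergence, so the appeal to the compact embedding $\HH^1(D)\hookrightarrow\hookrightarrow\LL^2(D)$ at that step is redundant, though harmless.)
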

The proof follows similar arguments to those employed in Proposition~\ref{prop:strong_convergence_of_extensions} and is therefore omitted.

%--------------------------  SHAPE SENSITIVITY ANALYSIS --------------------------
%--------------------------  SHAPE SENSITIVITY ANALYSIS --------------------------
%--------------------------  SHAPE SENSITIVITY ANALYSIS --------------------------
\section{Shape Sensitivity Analysis}
\label{sec:Shape_sensitivity_analysis}
%----------------------------------------------------------------------------------------------------------------------------------
%	SHAPE PERTURBATIONS AND MATERIAL DERIVATIVE
%----------------------------------------------------------------------------------------------------------------------------------
\subsection{Shape perturbations and material derivative} 
To solve Problem~\ref{prob:shape_optimization} numerically, we use a shape-gradient method with FEM, which requires the shape derivative of $J$. Here, we derive it via \textit{shape calculus} \cite{DelfourZolesio2011,HenrotPierre2018,MuratSimon1976,Simon1980,SokolowskiZolesio1992}, using a chain-rule approach based on the \textit{material} derivative of the solution to \eqref{eq:complex_PDE}. Let $\sfTheta$ denote admissible deformation fields $\VV$:
\[
\sfTheta := \{ \VV \in C^{1,1}(\overline{D}, \mathbb{R}^d) \mid \operatorname{supp}(\VV) \Subset \overline{D}_{\delta} \}.
\]
We let $\Vn=\langle \VV,\nn\rangle$, where $\nn$ is the outward unit normal vector to $\varOmega$.
From this point forward, we consider all deformation fields to be admissible unless indicated otherwise.

For $t \geqslant 0$ and $\VV \in \sfTheta$, we consider the operator $T_t: \overline{D} \to \overline{D}$ \cite[p.~147]{DelfourZolesio2011}:
\[
	T_t = \mathrm{id} + t \VV, \qquad T_0(\varOmega) = \varOmega \in \Oad.
\]
We assume that $t_0 > 0$ is sufficiently small so that, for all $t \in \textsf{I} := [0, t_0)$, $T_t$ is a $C^{1,1}$ diffeomorphism of $D$ onto itself with strictly positive Jacobian $I_t := \det(D T_t) > 0$.  
Thus, $T_t$ preserves the topology and regularity of $\varOmega$ under the perturbation.

Let us denote $A_{t} := I_{t}({D}T_{t}^{-1})({D}T_{t})^{-\top}$, $B_{t} := I_{t} |({D}T_{t})^{-\top} \nn|$, and $C_{t} := I_{t} ({D}T_{t})^{-\top}$. 
We assume that, for all $t \in \textsf{I}$, $A_t$ and $I_t$ are uniformly bounded and that $A_t$ is coercive \cite[p.~526]{DelfourZolesio2011}.  
It is straightforward to verify that the mappings $t \mapsto I_t$, $A_t$, $B_t$, and $C_t$ are continuously differentiable, with derivatives given by \cite[pp.~75--76, 79--85]{SokolowskiZolesio1992} (here $\dot{Z}_{0} = (d/dt){Z}_{t}|_{t=0}$): $\dot{I}_{0} = \divVV$, $\dot{B}_{0} = {\operatorname{div}}_{\tau} \VV = \divVV \big|_{\varSigma} - (D\VV\nn)\cdot\nn =:B$, $\dot{A}_{0} = (\divVV)\vect{I} -  D\VV - (D\VV)^\top =: A$, and $\dot{C}_{0} = (\divVV)\vect{I} - (D\VV)^\top =: C$, where ${\operatorname{div}}_{\tau}$ denotes the tangential divergence operator. 
Note that $B$ vanishes on $\varSigma$ for all $\VV \in \sfTheta$.

%%%The state $u(\varOmega) \in \spaceV(\varOmega)$ has a \textit{material derivative} $\dot{u} = \dot{u}(\varOmega)[\VV]$ at $\varOmega \in \Oad$ in the direction of $\VV \in \sfTheta$ if the limit  
%%%\[
%%%	\dot{u} = \lim_{t \searrow 0} \frac{u^t(\varOmega) - u(\varOmega)}{t}, \qquad
%%%	u^t(x) := %%%(u(\varOmega_t) \circ T_t)(x) = 
%%%	((u_{1t} + i u_{2t})(\varOmega_t) \circ T_t)(x), \ x \in \varOmega,
%%%\]
%%%exists.
%%%Notice that $u^{t}$ is defined on the fixed domain $\varOmega$.  

For $\varOmega \in \Oad$, $\varphi, \psi \in \spaceV(\varOmega)$, we introduce the bilinear form 
\[
	\mathcal{M}(\varphi,\psi) :=  -\left\{ \intO{ \left[ \left( \sigma A \nabla{\varphi} +  (\nabla \sigma \cdot \VV) \nabla{\varphi}  \right) \cdot \nabla \overline{\psi} + \left( C^{\top} \vect{b} + D \vect{b} \VV  \right) \cdot \nabla{\varphi} \overline{\psi} \right] } \right\}.
\]
If $\varOmega = \varOmega^{\star}$, we write $\mathcal{M}^{\star}(\varphi,\psi)$.

For \eqref{eq:complex_PDE}, the material derivative is stated in the following lemma.
\begin{lemma}\label{lem:material_derivative_of_u}
The material derivative $\dot{u} \in \spaceV(\varOmega)$ uniquely satisfies
%%%%%%\begin{equation}\label{eq:material_derivative_of_u}
%%%%%%\begin{aligned}
%%%%%%	a(\dot{u}, \psi) =& -\intO{\sigma A \nabla{u} \cdot \nabla \overline{\psi}}
%%%%%%	-\intO{\left( C^{\top} \vect{b} \cdot \nabla{u} \right) \overline{\psi}}
%%%%%%	%%% -i\intS{B u \overline{\psi}} 
%%%%%%	\\
%%%%%%	&\quad -\intO{\left( \nabla \sigma \cdot \VV \right)
%%%%%%	\left( \nabla{u} \cdot \nabla \overline{\psi} \right)}
%%%%%%	-\intO{D \vect{b} \VV \cdot \nabla{u} \, \overline{\psi}},
%%%%%%	\qquad \forall \psi \in \spaceV(\varOmega).
%%%%%%\end{aligned}
%%%%%%\end{equation}
\begin{equation}\label{eq:material_derivative_of_u}
	a(\dot{u}, \psi) = \mathcal{M}(u,\psi),
	\quad \forall \psi \in \spaceV(\varOmega).
\end{equation} 
\end{lemma}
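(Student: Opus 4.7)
The plan is to use the standard transport-and-differentiate strategy of shape calculus. Since $\VV \in \sfTheta$ has compact support away from $\varSigma$, the perturbation $T_t$ fixes $\varSigma$ pointwise, so $\varSigma_t = \varSigma$ while $\varGamma_t = T_t(\varGamma)$. I would first pull back the weak formulation of \eqref{eq:complex_PDE} from $\varOmega_t$ to the fixed reference domain $\varOmega$: for any $\psi \in \spaceV(\varOmega)$, the test function $\psi \circ T_t^{-1}$ lies in $\spaceV(\varOmega_t)$, and the change of variables $x_t = T_t(x)$ combined with the chain rule $(\nabla_{x_t} v) \circ T_t = (DT_t)^{-\top} \nabla(v \circ T_t)$ produces the $t$-dependent problem
\[
    a_t(u^t, \psi) := \intO{(\sigma \circ T_t)\, A_t \nabla u^t \cdot \overline{\nabla \psi}} + \intO{\bigl(C_t^{\top}(\vect{b} \circ T_t)\bigr) \cdot \nabla u^t\, \overline{\psi}} + i \intS{u^t\, \overline{\psi}} = l(\psi),
\]
valid for all $\psi \in \spaceV(\varOmega)$, with $a_0 = a$ and $u^0 = u$. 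Observe that the boundary integrals on $\varSigma$ are unchanged under the pullback.

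Next, I would establish that the map $t \mapsto u^t \in \spaceV(\varOmega)$ is differentiable at $t=0$ by applying an implicit function argument to the affine-in-$v$ map $F : \textsf{I} \times \spaceV(\varOmega) \to \spaceV(\varOmega)^{\ast}$ defined by $F(t,v)(\psi) = a_t(v,\psi) - l(\psi)$. By \eqref{eq:derivative_of_regular_maps} and the $C^{1,1}$-smoothness of $T_t$, the coefficients $\sigma \circ T_t$, $A_t$, $\vect{b} \circ T_t$, $C_t$ depend $C^1$-smoothly on $t$, so $F$ is $C^1$. Its partial derivative $D_v F(0,u) = a(\cdot,\cdot)$ is an isomorphism by the complex Lax-Milgram lemma used for \eqref{eq:weak_form_of_state} under Assumption~\ref{assumption:key_conditions}(iv). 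Having secured existence of $\dot{u} \in \spaceV(\varOmega)$, I would differentiate $a_t(u^t, \psi) = l(\psi)$ at $t=0$ with $\psi$ fixed, using $(d/dt)(\sigma \circ T_t)|_{t=0} = \nabla \sigma \cdot \VV$, $(d/dt)(\vect{b} \circ T_t)|_{t=0} = D\vect{b}\, \VV$, and the formulas for $\dot{A}_0 = A$ and $\dot{C}_0 = C$ from \eqref{eq:derivative_of_regular_maps}. Collecting the terms that do not involve $\dot{u}$ on the right-hand side yields exactly $\mathcal{M}(u,\psi)$, producing \eqref{eq:material_derivative_of_u}; uniqueness of $\dot{u}$ follows from the coercivity of $a$.

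The main obstacle will be the smooth dependence of $u^t$ on $t$ in the complex-valued setting: one must show that the pulled-back form $a_t$ is uniformly bounded and coercive on a small interval $[0, t_0)$, so that the Lax-Milgram isomorphism persists in a neighborhood and the implicit function theorem can be applied. This relies sharply on Assumption~\ref{assumption:key_conditions}(iv), since the coercivity estimate $\Re\{a(\varphi,\varphi)\} \geqslant (\sigma_0 - c\supb) \vertiii{\varphi}^2_{\spaceV(\varOmega)}$ must survive under small perturbations of $\sigma$, $\vect{b}$ and of the tensors $A_t$, $C_t$ — a fact which follows from $A_0 = I$, $C_0 = I$, continuity in $t$, and the strict inequality in Assumption~\ref{assumption:key_conditions}(iv). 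Once this uniform invertibility is in place, the remainder of the proof reduces to the algebraic bookkeeping encoded in \eqref{eq:derivative_of_regular_maps}.
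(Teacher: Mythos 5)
Your proof is correct and follows essentially the same route as the paper's (omitted) argument: pull the weak form back to the fixed domain via $T_t$, use Assumption~\ref{assumption:key_conditions}(iv) to keep the transported sesquilinear form uniformly coercive for small $t$, note that the $\varSigma$-integrals are unaffected because $\VV$ vanishes near $\varSigma$, and differentiate at $t=0$ using \eqref{eq:derivative_of_regular_maps} to identify $\mathcal{M}(u,\psi)$, with uniqueness from coercivity of $a$. The only difference is presentational: you make the differentiability of $t \mapsto u^t$ explicit through an implicit-function argument, whereas the paper delegates this step to the analogous Proposition~3.1 of \cite{Rabago2023b}.
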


The proof follows standard arguments and is analogous to Proposition~3.1 in \cite{Rabago2023b}, so it is omitted. Part of the proof requires showing that, for $t \in \textsf{I}$, the map $(t,\varphi,\psi) \mapsto \Phi(t,\varphi,\psi)$ is differentiable with respect to $t$, where the sesquilinear form $\Phi: \textsf{I} \times \spaceV(\varOmega) \times \spaceV(\varOmega) \to \mathbb{R}$ is
\[
	\Phi(t,\varphi, \psi) = \intO{\sigma^{t} A_{t} \nabla \varphi \cdot \nabla \overline{\psi}}
		 + \intO{ \vect{b}^{t} \cdot C_{t} \nabla \varphi \overline{\psi}}
		+ i \intS{B_{t} u \overline{\psi}},
\]
which is continuous and coercive on $\spaceV(\varOmega) \times \spaceV(\varOmega)$. Coercivity is ensured by choosing $c = \eta_{1} c_P^{-1} \abs{C_t}_\infty^{-1} > 0$ in Assumption~\ref{assumption:key_conditions}(iv).  
%%%Finally, we note that since $\VV \in \sfTheta$ vanishes on $\varSigma$, $B$ does as well, and hence the integrals over $\varSigma$ involving $B$ vanish.
%
%
%----------------------------------------------------------------------------------------------------------------------------------
%	SHAPE DERIVATIVES OF THE COST FUNCTIONS
%----------------------------------------------------------------------------------------------------------------------------------
\subsection{Shape derivatives of the shape functional}
\label{subsec:Shape_derivatives_of_the_shape_functional}
%%%%%The shape functional ${J} : \varOmega \to \mathbb{R}$ admits a directional Eulerian derivative at $\varOmega \in \Oad$ in the direction $\VV \in \sfTheta$ if the limit
%%%%%\[
%%%%%	\dfrac{d}{dt} {J}(\varOmega_{t})\big|_{t=0}
%%%%%	= \lim_{t \searrow 0} \frac{{J}(\varOmega_{t}) - {J}(\varOmega)}{t} =: d {J}(\varOmega)[\VV]
%%%%%\]
%%%%%exists (cf.~\cite[Eq.~(3.6), p.~172]{DelfourZolesio2011}).
%%%%%If the mapping $\VV \mapsto d {J}(\varOmega)[\VV]$ is linear and continuous for all $\VV \in \sfTheta$, we say that ${J}$ is \textit{shape differentiable} at $\varOmega$, and this mapping is called the \textit{shape gradient} of $J$.

%%%%Similarly, the second-order Eulerian derivative of ${J}$ at $\varOmega \in \Oad \cap C^{2,1}$ along $\VV, \WW \in \sfTheta \cap C^{2,1}(\overline{D};\mathbb{R}^{d})$ is defined by
%%%%\[
%%%%\lim_{t \to 0} \frac{1}{t} \left( \frac{d {J}(\varOmega_{t}(\WW))[\VV] - d {J}(\varOmega)[\VV]}{t} \right) =: d^{2} {J}(\varOmega)[\VV,\WW],
%%%%\]
%%%%if the limit exists \cite[Def.~2.3]{DelfourZolesio1991a}.
%%%%We say that ${J}$ is twice shape differentiable if $d^{2} {J}(\varOmega)[\VV,\WW]$ exists for all $\VV$ and $\WW$ and is bilinear and continuous in both arguments; this is called the \textit{shape Hessian} of ${J}$.

It is straightforward to verify that $J$ is shape differentiable at $\varOmega \in \Oad$ along $\VV \in \sfTheta$ (see \cite[Eq.~(3.6), p.~172]{DelfourZolesio2011} for the defintion of \textit{shape differentiability}), since $T_t$ is a $C^{1,1}$-diffeomorphism and the maps $t \mapsto I_t$ and $t \mapsto u^t$ are differentiable.  
The following result characterizes the shape derivative of $J$ via the chain rule and Lemma~\ref{lem:material_derivative_of_u}.

%
%
%%% ---------------------------------------------------------------------------
%%%	SHAPE DERIVATIVE VIA EULERIAN DERIVATIVE
%%% ---------------------------------------------------------------------------
\begin{proposition}[Shape gradient of $J$]
	\label{prop:shape_gradients}
	The first-order shape derivative of $J$ is given as follows:
	\begin{equation} \label{eq:shape_gradient} 	
		dJ(\varOmega)[\VV] 
		= \intG{G\nn \cdot \VV} 
%		= \intG{G(u,p)\nn \cdot \VV} 
		:= \intG{\left[\sigma\left(\dn{\realu} \dn{\imagp} - \dn{\imagu} \dn{\realp} \right)\right] \nn \cdot \VV},
	\end{equation} 
	where $G=G(u,p)$, $p = \realp + i \imagp \in \HH^{2}(\varOmega) \cap \spaceV(\varOmega)$ is the adjoint variable that uniquely solves the adjoint system
	\begin{equation}\label{eq:adjoint_system_p}
        \left\{\arraycolsep=3pt\def\arraystretch{1}
        \begin{array}{rcll}
               \div{ \sigma\nabla p} + \vect{b} \cdot \nabla p + (\divbb) p  & = & \imagu & \text{ in $\varOmega$},\\
                p & = & 0 & \text{on $\varGamma$},\\
                \sigma\dn{p} + p \vect{b} \cdot \nn -ip& = & 0 & \text{on $\varSigma$}.
        \end{array}
        \right.
        \end{equation}		 
\end{proposition}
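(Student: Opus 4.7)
The plan is to combine the chain rule with an adjoint-state reduction. First, I transport the cost to the fixed reference domain via $T_t$ and differentiate at $t=0$:
\[
J(\varOmega_t)=\tfrac12\intO{|u_2^t|^2\,I_t}, \qquad
dJ(\varOmega)[\VV]=\intO{u_2\dotu_2}+\tfrac12\intO{u_2^2\,\divVV}.
\]
The second integral will eventually concentrate on $\varGamma$: $u_2=0$ there, and $\VV$ is supported away from $\varSigma$. Next, I introduce the complex adjoint $p$ defined by \eqref{eq:adjoint_system_p}. Its weak form, obtained by integrating \eqref{eq:adjoint_system_p} by parts together with the Robin-type boundary condition on $\varSigma$, is
\[
a(\varphi,p)=-\intO{\imagu\,\varphi}\qquad\forall\varphi\in\spaceV(\varOmega),
\]
and well-posedness in $\HH^{2}(\varOmega)\cap\spaceV(\varOmega)$ follows from the complex Lax--Milgram lemma under Assumption~\ref{assumption:key_conditions}(iv) together with standard elliptic regularity, exactly as for the state equation.

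Setting $\varphi=\dotu\in\spaceV(\varOmega)$ in the adjoint identity and combining with Lemma~\ref{lem:material_derivative_of_u} yields the pivotal equality
\[
-\intO{\imagu\,\dotu}=a(\dotu,p)=\mathcal{M}(u,p).
\]
Because $\imagu$ is real, $\intO{u_2\dotu_2}=\Im\intO{u_2\dotu}=-\Im\,\mathcal{M}(u,p)$, so $\dotu$ is eliminated and $dJ[\VV]=-\Im\,\mathcal{M}(u,p)+\tfrac12\intO{u_2^{2}\divVV}$ depends only on $u$ and $p$.

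It remains to localize this expression on $\varGamma$. I expand $\mathcal{M}(u,p)$ into its contributions associated with $A$, $C^{\top}$, $\nabla\sigma\cdot\VV$, and $D\vect{b}\,\VV$, and apply repeated integration by parts, using the state PDE for $u$ and the adjoint PDE for $p$ to cancel every interior residue; the $\varSigma$-boundary terms produced along the way disappear because $\VV\equiv 0$ near $\varSigma$. The Jacobian piece $\tfrac12\intO{u_2^{2}\divVV}$ is treated by a further IBP yielding $-\intO{u_2\nabla u_2\cdot\VV}$ (with no $\varGamma$-trace, since $u_2=0$ there), which absorbs the remaining volume contributions from $\mathcal{M}(u,p)$. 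After these cancellations the integrand is concentrated on $\varGamma$ alone; the homogeneous Dirichlet traces $u=p=0$ on $\varGamma$ reduce the gradients to $\nabla u=\dn{u}\,\nn$ and $\nabla p=\dn{p}\,\nn$, so the result collapses to a scalar multiple of $\sigma\,\dn{u}\,\dn{\overline{p}}\,\Vn$. Expanding this complex product and matching signs with the $-\Im$ outside then delivers precisely $G(u,p)=\sigma(\dn{\realu}\dn{\imagp}-\dn{\imagu}\dn{\realp})$, establishing \eqref{eq:shape_gradient}.

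The main technical obstacle is the regrouping inside $\mathcal{M}(u,p)$: each tensor term must be paired with its natural IBP partner so that every occurrence of $D\VV$ is matched against a divergence supplied by the PDEs, a calculation that hinges on tangential-calculus identities on $\varGamma$ and on the product rule applied to $\sigma$ and $\vect{b}$. A secondary source of fragility is the complex bookkeeping, since the sign of each conjugation controls which cross-term survives in the imaginary part; the $\Im$ should therefore be taken only at the very end, once every cancellation has been logged.
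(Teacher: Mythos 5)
Your proposal is correct and follows essentially the same route as the paper: Hadamard's formula for $dJ$, testing the adjoint weak form with $\dot{u}$ and invoking Lemma~\ref{lem:material_derivative_of_u} to get $-\intO{\imagu\dot{u}}=\mathcal{M}(u,p)$, then reducing $\mathcal{M}(u,p)$ by integration by parts (the paper's Appendix~\ref{Appx:formulae} computation) to $\intG{\sigma\dn{u}\dn{\overline{p}}\Vn}-\intO{\imagu(\VV\cdot\nabla u)}$ and taking imaginary parts. The only difference is that you sketch rather than execute the tensor-calculus reduction of $\mathcal{M}$, which the paper carries out explicitly in the appendix; your outline of that step (PDE substitution, vanishing $\varSigma$-terms since $\VV=0$ there, cancellation of the volume term against the transported Jacobian piece, and $\nabla u=\dn{u}\,\nn$, $\nabla p=\dn{p}\,\nn$ on $\varGamma$) matches it.
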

The weak formulation of the adjoint system \eqref{eq:adjoint_system_p} is stated as follows: Find $p\in {\spaceV}(\varOmega)$ such that 
\begin{equation}\label{eq:adjoint_weak_form}
\intO{\left\{ \sigma\nabla p \cdot \nabla{\overline{\varphi}} + (\vect{b}\cdot \nabla \overline{\varphi})p \right\}}
+i\intS{p \overline{\varphi}}
=\intO{\imagu\overline{\varphi}},\ \forall\varphi \in {\spaceV}(\varOmega).
\end{equation} 
%
%
%
%%%%For later use, we write $\mathcal{M} = \mathcal{M}(u,p)$; see Appendix~\ref{Appx:formulae}.
%
%
%
\begin{proof}[Proof of Proposition~\ref{prop:shape_gradients}]
	Clearly, as a consequence of the assumptions, $J$ is shape-differentiable and we can apply Hadamard's differentiation formula (see \cite{DelfourZolesio2011,HenrotPierre2018,SokolowskiZolesio1992}) to obtain
$dJ(\varOmega)[\VV] = \frac{1}{2}\intO{ \divVV\abs{\imagu}^{2} } + \intO{ \imagu\dot{u}_{2} }$.
For $\VV \in \sfTheta$ and sufficiently smooth $\psi$, the divergence theorem tells us that
\begin{equation}\label{eq:divergence_identity}
	-\intO{\divVV\psi}
%%%	=\intO{\VV \cdot \nabla \psi}-\intG{\psi(\VV\cdot \nn)}-\intS{\psi(\VV\cdot \nn)}
	=\intO{\VV \cdot \nabla \psi}-\intG{\psi(\VV\cdot \nn)},
\end{equation}
Putting $\psi=\abs{\Im\{u\}}^{2}=\abs{\imagu}^{2} \in W^{1,1}(\varOmega)$ we deduce that
\begin{equation}\label{eq:dJ}
	dJ(\varOmega)[\VV] =-\intO{\imagu\VV \cdot \nabla \imagu} +\intO{ \imagu\dot{u}_{2} }.
\end{equation}
We rewrite the above expression with the goal of replacing $\dot{u}$ with the adjoint variable $p$.
To start, let us take $\varphi = \dot{u} \in \spaceV(\varOmega)$ in \eqref{eq:adjoint_weak_form}, to get
\begin{equation}\label{eq:weak_form_p:dotu}
\begin{aligned}
-\intO{\left\{ \sigma\nabla p \cdot \nabla{\overline{\dot{u}}} + (\vect{b}\cdot \nabla \overline{\dot{u}})p \right\}}
+i\intS{p \overline{\dot{u}}}
=\intO{\imagu\overline{\dot{u}}}.
\end{aligned}
\end{equation}
Next, we choose $\psi =p\in \spaceV(\varOmega)$ in \eqref{eq:material_derivative_of_u} and note that $B={\operatorname{div}}_{\tau} \VV=0$ on $\varSigma$, for any $\VV \in \sfTheta$, to obtain
$a(\dotu, p) = \mathcal{M}(u,p)$.
Comparing this equation with the sesquilinear form $a$ given in \eqref{eq:sesquilinear_form_a}, with $\varphi=\dot{u}$ and $\psi=p$, we arrive at the equation $a(\dot{u},p) = \mathcal{M}(u,p)$.%%% = \mathcal{M}$.

Now, conjugating Equation~\eqref{eq:weak_form_p:dotu}, {it can be verified that}
\begin{equation}\label{eq:identity_M}
-\intO{\imagu\dot{u}} 
	%%%= \mathcal{M} 
	=-\intO{\imagu\VV\cdot \nabla{u}}+\intG{ \sigma \dn{u} \dn{\overline{p}} \Vn }.
\end{equation}

Finally, by comparing the imaginary parts on both sides of this equation and returning to \eqref{eq:dJ}, we obtain the desired shape derivative \eqref{eq:shape_gradient}.
\end{proof}
\begin{remark}\label{rem:necessary_condition}
Assume $\varOmega^{\star} \in \Oad$ solves Problem~\ref{prob:inverse}, i.e., $\imagu = 0$ in $\varOmega^{\star}$; then $\varOmega^{\star}$ is stationary for Problem~\ref{prob:shape_optimization}, hence $dJ(\varOmega^{\star})[\VV] = 0$ for all $\VV \in \sfTheta$.
\end{remark}
From now on, $\varOmega^{\star} \in \Oad$ is called a \textit{critical shape} of $J$ if it satisfies Remark~\ref{rem:necessary_condition}.
%
%
%
%
%----------------------------------------------------------------------------------------------------------------------------------
%	SECOND-ORDER SHAPE ANALYSIS
%----------------------------------------------------------------------------------------------------------------------------------
\subsection{Second-order shape derivative of the cost function at the critical shape}
In this section, we derive the second-order shape derivative (\cite[Def.~2.3]{DelfourZolesio1991a}) of $J$ using the material derivatives of $u$ and $p$. 
Direct computation via the shape derivatives of $u$ and $p$ requires $u, p \in \HH^3(\varOmega) \cap \spaceV(\varOmega)$, which holds if $\varOmega \in \Oad \cap C^{2,1}$ and $(f,g) \in H^{5/2}(\varOmega) \times H^{3/2}(\varOmega)$, {and the coefficients satisfy $\sigma \in W^{2,\infty}(D)$ and $\vect{b} \in W^{1,\infty}(D)^d$} (cf.~\cite{BacaniPeichl2013}). 
By contrast, expressing the second-order derivative in terms of material derivatives relaxes this requirement, allowing $\varOmega \in C^{1,1}$.

%
%
%----------------------------------------------------------------------------------------------------------------------------------
%	PROPOSITION: SHAPE HESSIAN
%----------------------------------------------------------------------------------------------------------------------------------
\begin{proposition}[Shape Hessian of $J$]\label{prop:shape_Hessian}
Assume $\varOmega$, $\VV$, $\WW$, and the Cauchy pair $(f,g)$ are sufficiently regular and admissible so that $J$ is twice shape differentiable. Then, the shape Hessian at a critical shape $\varOmega^{\star}$ is
\[
	d^{2}J(\varOmega^{\star})[\VV,\WW] 
	= \intGstar{h[\WW]\nn \cdot \VV}
	= \intGstar{\sigma\dn{w_2}[\WW]\dn{\realu}\nn \cdot \VV},
\]
where $\imagu = \Im\{u\}$, $u$ solves \eqref{eq:complex_PDE} with $\varOmega = \varOmega^{\star}$, and $w=w_{1}+iw_{2}=\Re\{w\}+i\Im\{w\} \in \HH^{2}(\varOmega^{\star}) \cap \spaceV(\varOmega^{\star})$.
The adjoint variable $w \in $ satisfies the following:
\begin{equation}\label{eq:adjoint_system_q}
    \left\{\arraycolsep=3pt\def\arraystretch{1}
    \begin{array}{rcll}
            \div{\sigma\nabla w} + \vect{b} \cdot \nabla w + (\divbb) w & = & \dot{u}_{2,\WW} & \text{ in $\varOmega^{\star}$},\\
            w & = & 0 & \text{on $\varGamma^{\star}$},\\
            \sigma\dn{w} + w \vect{b} \cdot \nn -iw& = & 0 & \text{on $\varSigma$}.\\
    \end{array}
    \right.
\end{equation}
\end{proposition}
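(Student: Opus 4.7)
The plan is to sidestep the extra regularity a direct shape derivative of $u$ and $p$ would demand and instead argue through material derivatives, exploiting at every turn the structure at a critical shape: $\imagu \equiv 0$ in $\varOmega^{\star}$ forces $\nabla \imagu \equiv 0$ in $\varOmega^{\star}$ and $\dn{\imagu} = 0$ on $\varGamma^{\star}$, and differentiating the identity $u \equiv 0$ on the moving boundary $\varGamma$ yields $\dot{u}_{\VV} = 0$ on $\varGamma^{\star}$. Starting from Hadamard's formula applied to $J(\varOmega) = \tfrac{1}{2}\intO{|\imagu|^{2}}$, with the boundary contribution vanishing because $\imagu = 0$ on $\varGamma$, I obtain the volume form $dJ(\varOmega)[\VV] = \intO{\imagu\,(\imagu)'[\VV]}$, where $(\imagu)'[\VV] = \dot{u}_{2,\VV} - \VV \cdot \nabla \imagu$. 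Differentiating once more along $\WW$ via the transport formula for volume integrals, every term carrying the factor $\imagu$ drops at the critical shape (both the boundary piece on $\varGamma^{\star}$ and the interior piece in $\varOmega^{\star}$), while $\nabla \imagu = 0$ reduces $(\imagu)'[\VV]$ to $\dot{u}_{2,\VV}$, leaving
\[
d^{2}J(\varOmega^{\star})[\VV,\WW] = \intOast{\dot{u}_{2,\VV}\,\dot{u}_{2,\WW}}.
\]

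Next, I convert this volume integral into a boundary integral on $\varGamma^{\star}$ through the adjoint $w$ solving~\eqref{eq:adjoint_system_q}. Testing the weak form of \eqref{eq:adjoint_system_q} against $\varphi = \dot{u}_{\VV} \in \spaceV(\varOmega^{\star})$, and comparing after conjugation with Lemma~\ref{lem:material_derivative_of_u} applied with $\psi = w$ (the $\varSigma$-contribution vanishes because $B \equiv 0$ on $\varSigma$ for admissible $\VV$), I arrive at
\[
\mathcal{M}^{\star}(u,w) = -\intOast{\dot{u}_{2,\WW}\,\dot{u}_{\VV}},
\]
the direct second-order analogue of the first equality in~\eqref{eq:identity_M}. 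Taking imaginary parts, and using that $\dot{u}_{2,\WW}$ is real, reduces the problem to the computation of $\Im\,\mathcal{M}^{\star}(u,w)$, via $\intOast{\dot{u}_{2,\VV}\,\dot{u}_{2,\WW}} = -\Im\,\mathcal{M}^{\star}(u,w)$.

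I then replicate the integration-by-parts reduction of Appendix~\ref{Appx:formulae}, now with $p$ replaced by $w$ and the source $\imagu$ replaced by $\dot{u}_{2,\WW}$, to obtain
\[
\mathcal{M}^{\star}(u,w) = -\intOast{\dot{u}_{2,\WW}\,\VV\cdot\nabla u} + \intGstar{\sigma\,\dn{u}\,\dn{\overline{w}}\,\Vn}.
\]
At the critical shape $u = \realu$ is real-valued, so the volume term is real and contributes nothing to the imaginary part; splitting $\dn{\overline{w}} = \dn{w_{1}} - i\,\dn{w_{2}}$, only the $w_{2}$-piece of the boundary integrand survives, yielding $\Im\,\mathcal{M}^{\star}(u,w) = -\intGstar{\sigma\,\dn{\realu}\,\dn{w_{2}}\,\Vn}$. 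Combining with the previous reduction produces the asserted identity.

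The principal obstacle is this last integration-by-parts identity: reproducing it term-by-term requires careful bookkeeping of the contributions from the perturbation matrices $A$ and $C$, the advection term $D\vect{b}\,\VV$, and the factor $\nabla\sigma\cdot\VV$, and verifying that all $\varSigma$-contributions drop because $B = \operatorname{div}_{\tau}\VV \equiv 0$ on $\varSigma$. The $\HH^{2}$-regularity of $u$ and $w$, available via elliptic regularity under the $C^{1,1}$-smoothness of $\varOmega^{\star}$, is what legitimizes all traces on $\varGamma^{\star}$ appearing in the final formula.
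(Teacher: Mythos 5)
Your proposal is correct and follows essentially the same route as the paper: reduce to $d^{2}J(\varOmega^{\star})[\VV,\WW]=\intOstar{\dot{u}_{2,\VV}\,\dot{u}_{2,\WW}}$ using $\imagu\equiv 0$ at the critical shape, then eliminate the material derivatives by testing the adjoint problem \eqref{eq:adjoint_system_q} with $\dot{u}_{\VV}$, invoking Lemma~\ref{lem:material_derivative_of_u} with $\psi=w$, applying the Appendix~\ref{Appx:formulae} reduction of $\mathcal{M}^{\star}$ with $p$ and $\imagu$ replaced by $w$ and $\dot{u}_{2,\WW}$, and taking imaginary parts. The only difference is cosmetic: you reach the intermediate volume identity via the shape-derivative (Hadamard volume) form of $dJ$, whereas the paper differentiates twice with material derivatives and drops the $\imagu$-weighted terms, but both collapse to the same expression at $\varOmega^{\star}$.
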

\begin{remark}
	Notice that $w$ is simply the derivative of $p$ with respect to $\WW \in \sfTheta$ while $\dot{u}_{2,\WW}$ is the derivative of $p$ with respect to $\WW$ (cf. \cite{RabagoAzegami2018,Rabago2023b}).
\end{remark}
%----------------------------------------------------------------------------------------------------------------------------------
%	PROOF OF SHAPE HESSIAN STRUCTURE AT A CRITICAL SHAPE
%----------------------------------------------------------------------------------------------------------------------------------
\begin{proof} 
Our assumptions ensure the existence of $\ddot{u}_{2,\VV,\WW} \in H^1(\varOmega)$.
Then, differentiating $J$ twice with respect to $\varOmega$, first along $\VV$ and subsequently along $\WW$, we obtain
\begin{align*}
d^{2}J(\varOmega)[\VV,\WW] 
	&= \frac{1}{2}\intO{ \left( \divVV \divWW \abs{\imagu}^{2}+ 2\divVV\imagu\dot{u}_{2,\WW} \right) }\\
 	&\qquad + \intO{ \left( \dot{u}_{2,\VV}\dot{u}_{2,\WW}+\imagu\ddot{u}_{2,\VV,\WW} \right) }.
\end{align*}
At $\varOmega = \varOmega^{\star}$, we have $\imagu = 0$, so we get $d^{2}J(\varOmega^{\star})[\VV,\WW] = \intOstar{ \dot{u}_{2,\VV}\dot{u}_{2,\WW} }$.
Our objective is to rewrite this expression in terms of an appropriate adjoint variable, eliminating $\dot{u}_{2,\VV}$ and $\dot{u}_{2,\WW}$. 
To this end, we multiply the first equation in \eqref{eq:adjoint_system_q} by $\overline{\dot{u}}_{\VV}$, integrate over $\varOmega^\star$, and take the complex conjugate, yielding:
\begin{equation}\label{eq:first_equation_for_hessian_computation}
\begin{aligned}
\hat{a}(\dot{u}_{\VV},  w)
	&:=-\intOstar{\left[ \sigma\nabla \overline{w} \cdot \nabla{\dot{u}_{\VV}}
		+ (\vect{b}\cdot \nabla \dot{u}_{\VV})\overline{ w} \right]}
		-i\intS{\overline{w} \dot{u}_{\VV}}\\
	&=\intOstar{\dot{u}_{2,\WW}\dot{u}_{\VV}}.
\end{aligned}
\end{equation}
Next, in \eqref{eq:material_derivative_of_u}, we set $\psi = w$ and replace $\dot{u}$ with $\dot{u}_{\VV}$, yielding $a(\dot{u}_{\VV}, w) = \mathcal{M}^\star({u}_{\VV}, w)$,
where $a$ is the sesquilinear form defined in \eqref{eq:sesquilinear_form_a} with $\varphi = \dot{u}_{\VV}$, $\psi = w$, and $\varOmega = \varOmega^\star$. That is,
\begin{equation}\label{eq:a_equal_hat_a}
a(\dot{u}_{\VV}, w) = -\hat{a}(\dot{u}_{\VV}, w),
\end{equation}
with $\hat{a}$ given in \eqref{eq:first_equation_for_hessian_computation}.
Equation \eqref{eq:a_equal_hat_a} then implies
\[
-\intOstar{\dot{u}_{2,\WW} \dot{u}_{\VV}} = \mathcal{M}^\star({u}_{\VV}, w) 
= -\intOstar{\dot{u}_{2,\WW} \VV \cdot \nabla \realu} + \intGstar{\sigma \dn{\overline{w}} \dn{\realu} \Vn}.
\]
Taking imaginary parts on both sides gives the desired expression.
\end{proof}
%
%
%
%%%%The preceding calculation shows that the shape Hessian at a critical shape can be obtained without computing the state's shape derivative, provided an appropriate adjoint is introduced. Higher regularity of the domain, deformation fields, and data is required only initially; in the final expression, $H^2$-regularity ensures the kernel is well-defined to interpret the traces of the normal derivatives of $u$ and the adjoint $w$, which suffices if $u, w \in \HH^2(\varOmega^\star) \cap \spaceV(\varOmega^\star)$. Thus, the analysis can be restricted to $\varOmega^\star \in \Oad$, with deformation fields in $\sfTheta$ and data regularity as in Assumption~\ref{assumption:key_conditions}.

%----------------------------------------------------------------------------------------------------------------------------------
%	COMPACTNESS OF THE HESSIAN
%----------------------------------------------------------------------------------------------------------------------------------
\subsection{Compactness of the Hessian at a critical shape}
Having computed the shape Hessian at $\varOmega^\star$, we analyze the stability of the shape optimization problem (see \cite{Dambrine2002,DambrinePierre2000,Eppler2000c,EpplerHarbrecht2012b}), showing its ill-posedness via the lack of coercivity of the Hessian's bilinear form in $H^{1/2}(\varGamma^\star)$.

The main result of this subsection is:
\begin{proposition}[Compactness at a critical shape]\label{prop:compactness_result}
The Riesz operator corresponding to the shape Hessian $d^2 J(\varOmega^\star)$ defined from $H^{1/2}(\varGamma^\star)$ to $H^{-1/2}(\varGamma^\star)$ is compact.
\end{proposition}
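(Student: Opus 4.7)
The plan is to exhibit the Riesz map $T:H^{1/2}(\varGamma^\star)\to H^{-1/2}(\varGamma^\star)$ associated with $d^{2}J(\varOmega^\star)$ as a composition that factors through an intermediate boundary space which embeds compactly into $H^{-1/2}(\varGamma^\star)$; the Rellich--Kondrachov theorem on the compact manifold $\varGamma^\star$ then delivers compactness.

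First, I would invoke the Hadamard--Zol\'esio structure theorem to reduce the Hessian expression from Proposition~\ref{prop:shape_Hessian} to a bilinear form depending only on the normal traces $V_{n}:=\VV\cdot\nn$ and $W_{n}:=\WW\cdot\nn$ on $\varGamma^\star$. Inspecting the kernel $\sigma\,\dn{w_{2}}\,\dn{\realu}$, the Riesz map takes the form $T:W_{n}\mapsto\sigma\,\dn{\realu}\,\dn{w_{2}}\big|_{\varGamma^\star}$, where $w=w_{1}+iw_{2}$ depends linearly on $W_{n}$ through the adjoint problem~\eqref{eq:adjoint_system_q} driven by $\dot{u}_{2,\WW}$. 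Since $\imagu\equiv 0$ in $\varOmega^\star$ at the critical shape, the material and classical shape derivatives of the imaginary part coincide in $\varOmega^\star$, and it is convenient to work with the shape derivative $u'[\WW]$ because it admits a cleaner boundary characterization in terms of $W_{n}$.

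Second, I would quantify the regularity gained along the chain of linear solution operators
\[
W_{n}\ \longmapsto\ u'[\WW]\ \longmapsto\ w\ \longmapsto\ \dn{w_{2}}\big|_{\varGamma^\star}.
\]
The shape derivative $u'[\WW]$ satisfies the same CCBM elliptic equation as \eqref{eq:complex_PDE} on $\varOmega^\star$, with nonhomogeneous Dirichlet datum $-\dn{u}\,W_{n}$ on $\varGamma^\star$ and the homogeneous Robin condition preserved on $\varSigma$ (because $\WW\in\sfTheta$ has compact support away from $\varSigma$). Since $u\in\HH^{2}(\varOmega^\star)$ and $\varGamma^\star\in C^{1,1}$, the datum $\dn{u}\,W_{n}$ belongs to $H^{1/2}(\varGamma^\star)$ with norm controlled by $\|W_{n}\|_{H^{1/2}(\varGamma^\star)}$, so a standard lifting argument combined with the well-posedness theory of Subsection~\ref{subsec:formulations} yields $u'[\WW]\in\spaceV(\varOmega^\star)$ with a bound of the same order. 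Consequently $\Im\{u'[\WW]\}\in L^{2}(\varOmega^\star)$ acts as the source of \eqref{eq:adjoint_system_q}, and $\HH^{2}$-elliptic regularity on the $C^{1,1}$-domain $\varOmega^\star$ yields $w\in\HH^{2}(\varOmega^\star)\cap\spaceV(\varOmega^\star)$ with $\|w\|_{\HH^{2}(\varOmega^\star)}\lesssim\|W_{n}\|_{H^{1/2}(\varGamma^\star)}$.

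Third, the trace theorem gives $\dn{w_{2}}\big|_{\varGamma^\star}\in H^{1/2}(\varGamma^\star)$, and since the multiplicative factor $\sigma\,\dn{\realu}$ is bounded on $\varGamma^\star$ (provided by $\sigma\in W^{1,\infty}$ and the $\HH^{2}$-regularity of $u$ combined with the $C^{1,1}$ regularity of $\varGamma^\star$ in $\Oad$), multiplication is bounded from $H^{1/2}(\varGamma^\star)$ into $L^{2}(\varGamma^\star)$. Thus $T$ factors as
\[
H^{1/2}(\varGamma^\star)\ \xrightarrow{\text{bounded}}\ H^{1/2}(\varGamma^\star)\ \hookrightarrow\ L^{2}(\varGamma^\star)\ \hookrightarrow\ H^{-1/2}(\varGamma^\star),
\]
where the embedding $H^{1/2}(\varGamma^\star)\hookrightarrow L^{2}(\varGamma^\star)$ is compact by Rellich--Kondrachov; the composition is therefore compact, which is the claim. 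The main obstacle is propagating regularity consistently through the two coupled elliptic problems (the CCBM shape-derivative equation and the adjoint system) and controlling the multiplier $\sigma\,\dn{\realu}$ in the pointwise product on $\varGamma^\star$; this is delicate because $H^{1/2}$ is not a Banach algebra, and the argument rests crucially on the $\HH^{2}$-regularity of the state together with the $C^{1,1}$-admissibility encoded in $\Oad$ and Assumption~\ref{assumption:key_conditions}.
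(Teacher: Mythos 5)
Your proposal follows essentially the same strategy as the paper's proof: the Riesz operator is factored through the regularity-gaining solution operators (linearized problem, then the adjoint problem \eqref{eq:adjoint_system_q}, then the normal-derivative trace into $H^{1/2}(\varGamma^\star)$), and compactness is obtained in the end from a compact boundary Sobolev embedding via Rellich--Kondrachov. The differences are organizational: the paper uses the material derivative $\dot{u}_{2,\WW}$ (its operator $S_0$) rather than the shape derivative $u'[\WW]$, it keeps the factor $\dn{\realu}\,\Vn$ in a separate bounded operator $\mathcal{T}$ (multiplication of $\dn{\realu}\in H^{1/2}(\varGamma^\star)$ by the Lipschitz function $\Vn$), and it locates the compactness in the embedding $H^{1/2}(\varGamma^\star)\hookrightarrow H^{-1/2}(\varGamma^\star)$ inside $\mathcal{S}=S_3\circ S_2\circ S_1\circ S_0$, whereas you absorb the whole kernel $\sigma\,\dn{\realu}\,\dn{w_2}$ into one operator and use $H^{1/2}(\varGamma^\star)\hookrightarrow L^2(\varGamma^\star)$.

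Two intermediate claims in your write-up are not justified at the regularity actually available and need repair. First, $\sigma\,\dn{\realu}$ need not be bounded on $\varGamma^\star$: from $u\in\HH^2(\varOmega^\star)$ and $\varGamma^\star\in C^{1,1}$ one only gets $\dn{\realu}\in H^{1/2}(\varGamma^\star)$, which does not embed into $L^\infty$ on a one- or two-dimensional boundary. This is repairable: use $H^{1/2}(\varGamma^\star)\hookrightarrow L^4(\varGamma^\star)$ (compactly into $L^q$ for $q<4$), so that multiplication by $\dn{\realu}$ maps into a Lebesgue space continuously embedded in $H^{-1/2}(\varGamma^\star)$ and the compact arrow can be placed before the multiplication; or simply follow the paper and pair $\dn{\realu}$ with the Lipschitz density $\Vn$ inside the bounded operator $\mathcal{T}$, leaving compactness entirely to $\mathcal{S}$. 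Second, and more specific to your route through the shape derivative, you assert that the Dirichlet datum $-\dn{u}\,(\WW\cdot\nn)$ lies in $H^{1/2}(\varGamma^\star)$ with norm controlled by $\|\WW\cdot\nn\|_{H^{1/2}(\varGamma^\star)}$. This is precisely the ``$H^{1/2}$ is not an algebra'' issue you flag but do not resolve: with $\dn{u}$ only in $H^{1/2}(\varGamma^\star)$ the claim fails for general $W\in H^{1/2}(\varGamma^\star)$ (it holds when $W$ is the normal trace of a field in $\sfTheta$, hence Lipschitz, but then the extension of the estimate to all of $H^{1/2}$ still has to be argued). The paper's choice of the material derivative sidesteps this entirely, since its defining variational problem \eqref{eq:material_derivative_of_u} involves no boundary product and no Dirichlet lifting. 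If you keep the shape-derivative formulation, you should either assume enough additional regularity for $\dn{u}$ to be a pointwise multiplier on $H^{1/2}(\varGamma^\star)$ (e.g.\ $u\in\HH^3(\varOmega^\star)$ or a $C^{1,\alpha}$ bound near $\varGamma^\star$), or solve the Dirichlet problem with rough boundary data by transposition and then recover the bound $\vertiii{w}_{\HH^2(\varOmega^\star)}\lesssim\|\WW\cdot\nn\|_{H^{1/2}(\varGamma^\star)}$; as written, that chain of estimates is incomplete.
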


\begin{proof}
For $\varOmega^{\star} \in \Oad$ and $\VV, \WW \in  \sfTheta \subset C^{1,1}(\overline{D}; \mathbb{R}^d)$, we have $\Vn = \VV \cdot \nn$, $\Wn = \WW \cdot \nn \in H^{1/2}(\varGamma^{\star})$, $\dn{\realu} \in H^{1/2}(\varGamma^{\star})$, and $\dn{w_2}[\WW] \in H^{1/2}(\varGamma^{\star})$.

We introduce the following function mappings associated with $\varGamma^{\star}$:
\begin{align*}
	\mathcal{S}: H^{1/2}(\varGamma^{\star})^{d} &\longrightarrow H^{-1/2}(\varGamma^{\star}), & \mathcal{T}: H^{1/2}(\varGamma^{\star})^{d}	&\longrightarrow H^{1/2}(\varGamma^{\star}), \\
\WW &\longmapsto \sigma \, \dn{w_2}[\WW], & \VV &\longmapsto \dn{\realu} \, \Vn.
\end{align*}
Thus, we can write
\[
d^{2}J(\varOmega^{\star})[\VV,\WW]
=  \langle \mathcal{S}(\Vn), \mathcal{T}(\Wn) \rangle_{H^{-1/2}(\varGamma^\star),\, H^{1/2}(\varGamma^\star)}
= \intGstar{ \sigma \, \dn{w_2}[\WW] \, \dn{\realu} \, \Vn }.
\]

On the one hand, the operator $\mathcal{T}$ is continuous, since multiplication of
$\partial_{\mathbf{n}}\realu \in H^{1/2}(\varGamma^\star)$ by the $C^{0,1}$ function $\Vn$
defines a continuous mapping on $H^{1/2}(\varGamma^\star)$.
On the other hand, we show that $\mathcal{S}$ is compact.
To this end, we decompose $\mathcal{S}$ as $\mathcal{S}=S_{3}\circ S_{2}\circ S_{1}\circ S_{0}$, where
\begin{align*}
    {S}_0 : H^{1/2}(\varGamma^\star)^{d} &\longrightarrow H^{1}(\varOmega^\star), & 
    {S}_1 : H^{1}(\varOmega^\star) &\longrightarrow \HH^{2}(\varOmega^\star), \\
    \WW &\longmapsto \dot{u}_{2,\WW}, &
    \psi &\longmapsto w,\\[0.5em]
    {S}_2 : \HH^{2}(\varOmega^\star) &\longrightarrow H^{1/2}(\varGamma^\star), &
    {S}_3 : H^{1/2}(\varGamma^\star) &\longrightarrow H^{-1/2}(\varGamma^\star), \\
    w &\longmapsto \dn{w_2}, &
    \psi &\longmapsto \sigma \psi.
\end{align*}
The operators ${S}_0$, ${S}_1$, ${S}_2$, and ${S}_3$ are continuous, with ${S}_3$ also compact due to the Rellich--Kondrachov embedding $H^{1/2}(\varGamma^\star) \hookrightarrow H^{-1/2}(\varGamma^\star)$ \cite{AdamsFournier2003}.
Therefore, the composition satisfies $\mathcal{S}(\WW) = {S}_3\big({S}_2\big({S}_1\big({S}_0(\WW)\big)\big)\big) 
= \sigma \, \dn{w_2}[\WW]$.
Since ${S}_0$, ${S}_1$, and ${S}_2$ are continuous and ${S}_3$ involves a compact embedding, it follows that $\mathcal{S}$ is compact, as required.
\end{proof}
Proposition~\ref{prop:compactness_result} highlights the inherent instability of problem~\ref{prob:shape_optimization}. Near the critical shape $\varOmega^{\star} = D\setminus\overline{\omega}^\star$ and for small $t > 0$, $J$ follows its second-order expansion. Consequently, no uniform estimate like $C t \sqrt{J(\varOmega_t)}$ exists independent of the deformation field $\VV$. This shows that the shape gradient’s sensitivity strongly depends on the deformation direction and fails for highly oscillatory perturbations where $J$ degenerates.

Perimeter or surface-area regularization can stabilize shape optimization under noisy data
\cite{AfraitesDambrineEpplerKateb2007,CherratAfraitesRabago2025b,RabagoAzegami2018}, although its necessity depends on the boundary conditions, noise level, and shape regularity.
In some cases, second-order generalized impedance conditions yield accurate reconstructions without explicit regularization
\cite{AfraitesDambrineEpplerKateb2007,CaubetDambrineKatebTimimoun2013,CaubetDambrineKateb2013}, and ADMM-based methods also provide stable alternatives.
In Subsection~\ref{subsec:ADMM_algorithm}, we propose a tailored ADMM scheme for shape identification with complex PDE constraints, which improves reconstructions under noisy data and for geometries with pronounced concavities.

\section{Numerical Algorithms and Examples} 
\label{sec:numerical_algorithms_and_examples}
We implement the proposed method using a FEM-based descent algorithm driven by the shape gradient, following \cite{CherratAfraitesRabago2025b,CherratAfraitesRabago2025}. The first subsection summarizes the conventional numerical approach for clarity and self-containment.
\subsection{Conventional numerical scheme for shape optimization}
\label{subsec:conventional_scheme}
A natural descent direction for $J$ is $\VV = -G\nn$ with $G \in L^2(\varGamma)$ and $G \not\equiv 0$. 
However, this may lead to unstable reconstructions and a deterioration of mesh quality due to the formation of irregular or poorly shaped elements.
To address this issue, we adopt an $H^1$-Riesz representation of the shape gradient \cite{Doganetal2007,Azegami2020}; that is, we seek $\VV \in H_{\varSigma,0}^{1}(\varOmega)^d$ such that
\begin{equation}\label{eq:Sobolev_gradient_computation}
	c_{b} \intO{ \nabla \VV : \nabla \vect{\varphi}} + (1 - c_{b}) \intG{ \nabla_{\varGamma} \VV : \nabla_{\varGamma} \vect{\varphi} }
	= - \intG{ G \nn \cdot \vect{\varphi} },
\end{equation}
for all $\vect{\varphi} \in H_{\varSigma,0}^{1}(\varOmega)^{d}$, with $c_{b} \in (0,1]$.
In this work, $c_b$ is chosen as $0.7$  

The resulting Sobolev gradient \cite{Neuberger1997} smoothly extends $-G\nn$ into $\varOmega$, with the tangential term improving regularity \cite{Doganetal2007}.

{
\begin{remark}
While the $H^{1}$ Riesz representation promotes numerical stability of descent algorithms, steepest descent methods in Banach spaces such as $W^{1,p}$ or $W^{1,\infty}$ may better capture non-smooth geometric features \cite{DeckelnickHerbertHinze2022,DeckelnickHerbertHinze2024,DeckelnickHerbertHinze2025,DelfourZolesio2011}. 
In the present work, we adopt an $H^{1}$ Sobolev-gradient framework primarily for its ease of implementation, while still providing a satisfactory balance between stability and convergence for the inverse advection--diffusion problem in the presence of noisy measurements.
%%%The associated difficulty of reconstructing concave-shaped obstacles are addressed in Subsection~\ref{subsec:ADMM_algorithm} through an ADMM-inspired strategy.
\end{remark}
}

To compute the $k$th domain approximation $\varOmega^{k}$, we follow these steps:
\begin{description}
    \setlength{\itemsep}{0.1em}
    \item[1. \textit{Initialization}] Select an initial shape $\varOmega^{0}$.
    \item[2. \textit{Iteration}] For $k = 0, 1, 2, \ldots$:
    \begin{enumerate}
    \item[2.1] Solve the state and adjoint state systems on the current domain $\varOmega^{k}$.
    \item[2.2] Select a step size $t^{k} > 0$ and compute the update vector $\VV^{k}$ in $\varOmega^{k}$.
    \item[2.3] Update the domain via $\varOmega^{k+1} = (\mathrm{id} + t^{k}\VV^{k})\varOmega^{k}$.
    \end{enumerate}
    \item[3. \textit{Stopping Test}] Repeat the iteration until a convergence criterion is satisfied.
\end{description}

{
\begin{remark}
We note that solving \eqref{eq:Sobolev_gradient_computation} smooths the descent velocity and improves mesh quality, but does not guarantee a diffeomorphic update. To prevent folding or inversion, one can either limit node displacements using an adaptive step size or monitor the Jacobian determinant to regenerate the mesh if necessary.
\end{remark}
}

In Step 2.2 we compute $t^k = \mu J(\varOmega^k)/|\VV^k|^2_{H^1(\varOmega^k)^d}$ \cite[p.~281]{RabagoAzegami2020}, reducing $\mu>0$ if necessary to avoid mesh inversion. In our experience, this strategy is sufficient. We set $\mu = 0.1$ in 2D and $0.01$ in 3D. The algorithm stops after a fixed number of iterations or if $t^k < 10^{-8}$.

We test the method in 2D and 3D. In 2D, we examine constant and spatially varying $\sigma$ with spatially varying $\vect{b}$ and introduce an ADMM modification to address limitations. The resulting CCBM--ADMM scheme is then applied to 3D, showing robustness to non-convex obstacles and noisy data.
All computations were performed on a MacBook Pro with an Apple M1 chip {with 16GB RAM, via \textsc{FreeFem++}} \cite{Hecht2012}.
%--------------------------  NUMERICAL EXAMPLES IN 2D --------------------------
%--------------------------  NUMERICAL EXAMPLES IN 2D --------------------------
%--------------------------  NUMERICAL EXAMPLES IN 2D --------------------------
\subsection{Numerical examples in 2D}
\label{subsec:Numerical_Examples_2D} 
For the 2D numerical examples, the specimen is taken as a unit circle centered at the origin, with $\sigma(x) = 1.1 + \sin(\pi x_{1})\sin(\pi x_{2})$ and $\vect{b} = (1.1 - \sin t, 1.1 + \cos t)$ for $t \in [0,2\pi]$.  
Synthetic data are generated from the Neumann boundary input $g(t) = 2 + \cos t$, with the measurement $f = u$ taken on $\partial \varOmega$.
To avoid ``inverse crimes" \cite[p.~154]{ColtonKress2013}, we use different schemes for data generation and inversion: $P_{2}$ elements on a finer mesh with 300 boundary nodes for data, and $P_{1}$ elements on a coarser mesh with 150 nodes ($h = 0.05$) for inversion.

We consider four obstacle geometries---ellipse, dumbbell, peanut, and L-block---shown as black solid interior shapes in Figure~\ref{fig:figure1}.

For all cases, the algorithm starts from $\varGamma^0 = C(\vect{0},0.6)$ and runs for 600 iterations, which provides satisfactory results without further tuning.  

To test noise robustness, we set $u^\delta = (1 + \delta \text{g.n.}) u^\star$, where $u^\star$ is the exact solution for input $g$ and ``g.n." denotes Gaussian noise with zero mean and standard deviation $\norm{u^\star}_{L^\infty(\varOmega)}$. The noisy measurement is $f|_{\varSigma} := u^\delta$.

%--------------------------  NUMERICAL RESULTS --------------------------
%--------------------------  NUMERICAL RESULTS --------------------------
%--------------------------  NUMERICAL RESULTS --------------------------
\subsection{Numerical results and discussion in 2D}
\label{subsec:discussion_in_2D} 

Here, we present numerical results for the 2D case. Figure~\ref{fig:figure1} shows reconstructed shapes for several test cases: left and right panels correspond to constant and spatially varying $\sigma$, respectively. For each case, the true inclusion, initial guess (dashed), and reconstructions from exact and noisy data are shown. The outer and inner black curves indicate the domain boundary and true inclusion, while colored curves correspond to noise levels $\delta = 0\%, 5\%$.

Our method accurately reconstructs shapes in the noise-free case. Moderate noise reduces reconstruction quality, especially for irregular or non-convex inclusions, due to both noise and the variability of $\sigma$ (and $\vect{b}$); see Figure~\ref{fig:figure1}. Even though not shown here, similar tests indicate that spatially varying $\vect{b}$ tends to yield slightly less accurate results than the constant case.  

Figure~\ref{fig:costs_2d} illustrates the L-block case, which violates the $C^{1,1}$ regularity assumption on the obstacle. Despite this, the cost functional stabilizes after several iterations, lower final values are achieved with exact data, and gradient norms decrease with minor oscillations, confirming convergence. 
Thus, the reconstruction remains reasonable even for this Lipschitz-smooth obstacle.  
%------------------------------------------------------------------------------------
% 			FIGURE  1
%------------------------------------------------------------------------------------         
\begin{figure}[h!]
\centering
\resizebox{0.235\linewidth}{!}{\includegraphics{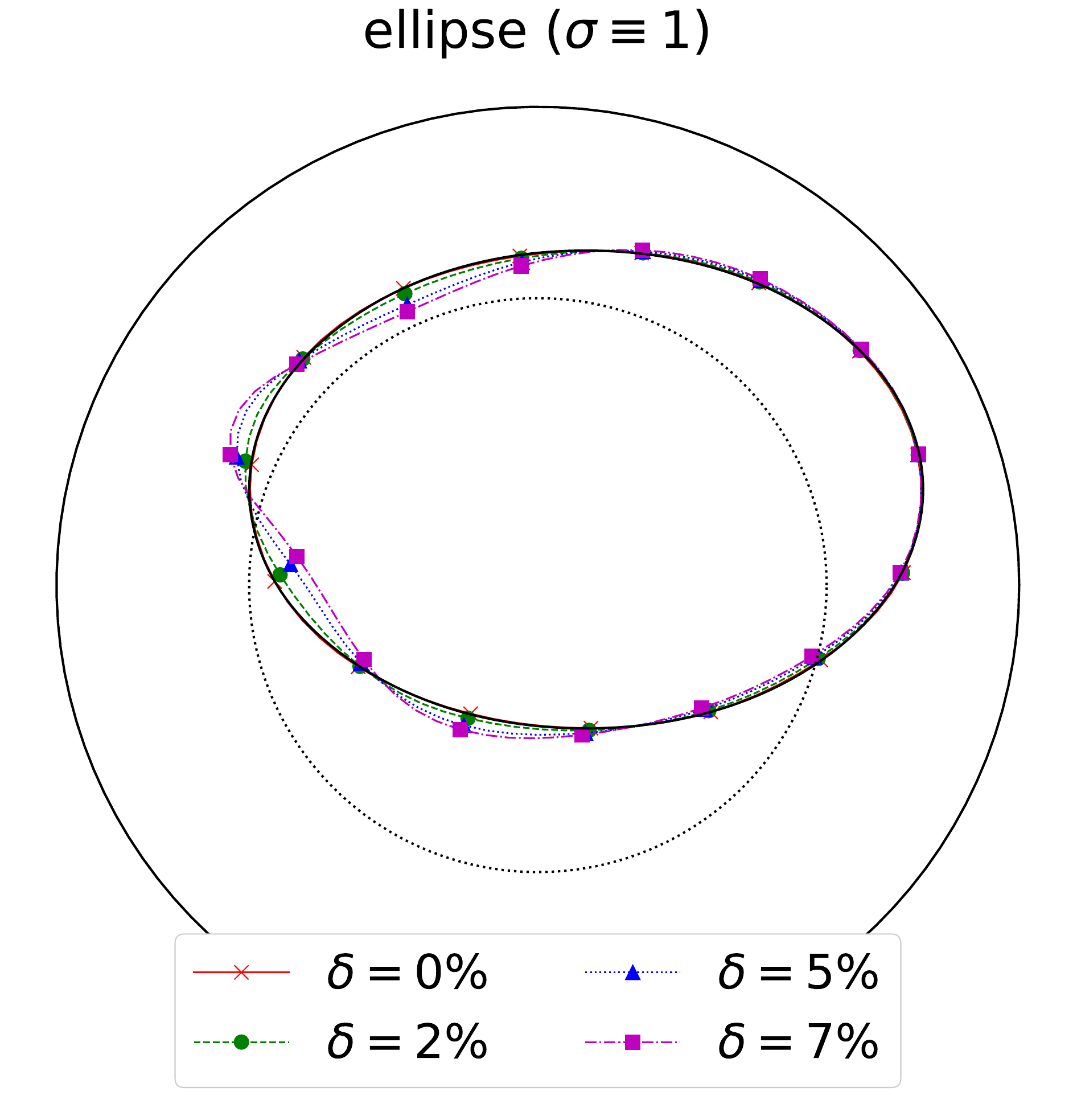}}\ 
\resizebox{0.235\linewidth}{!}{\includegraphics{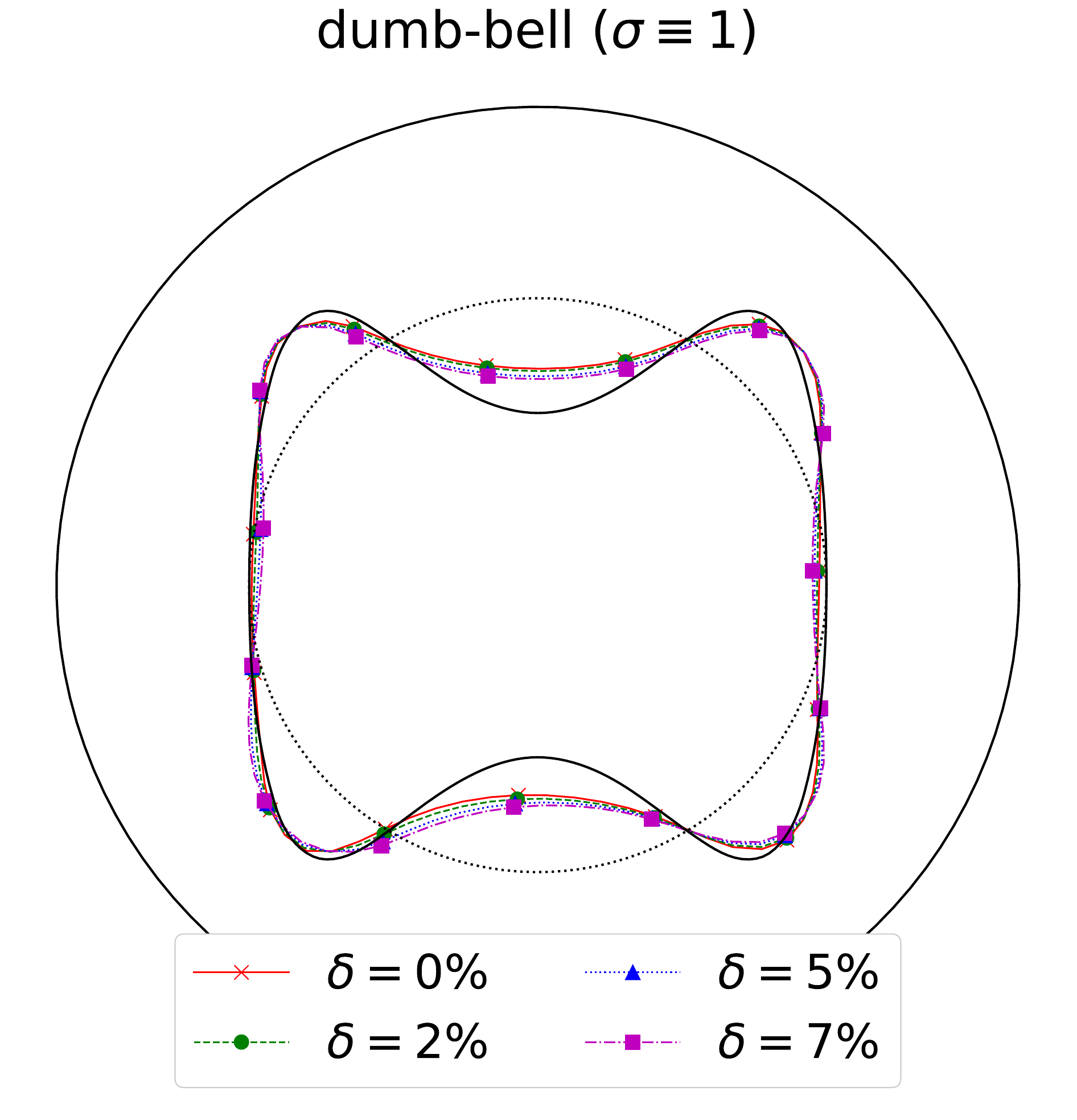}}\hfill 
\resizebox{0.235\linewidth}{!}{\includegraphics{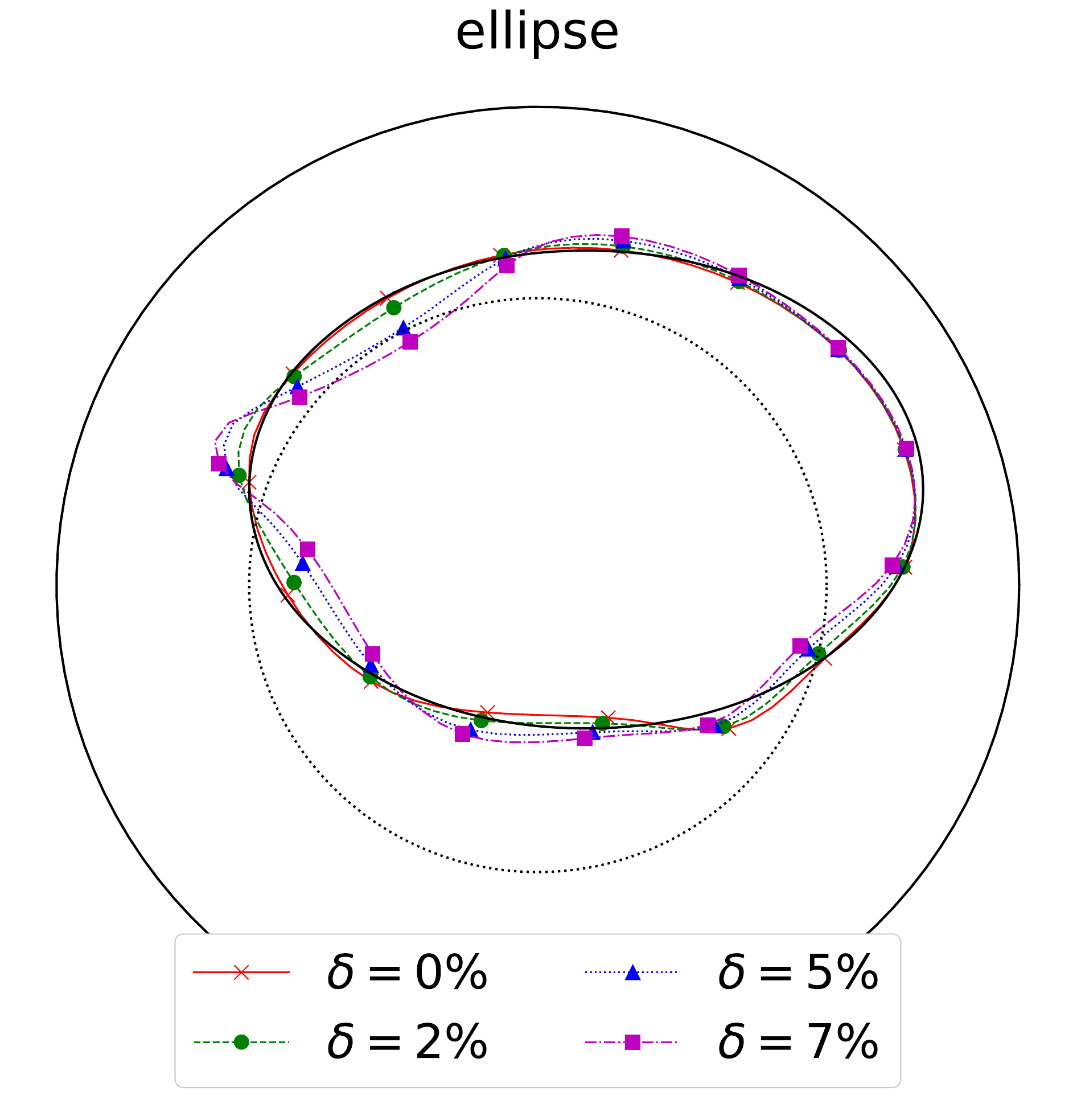}}\ 
\resizebox{0.235\linewidth}{!}{\includegraphics{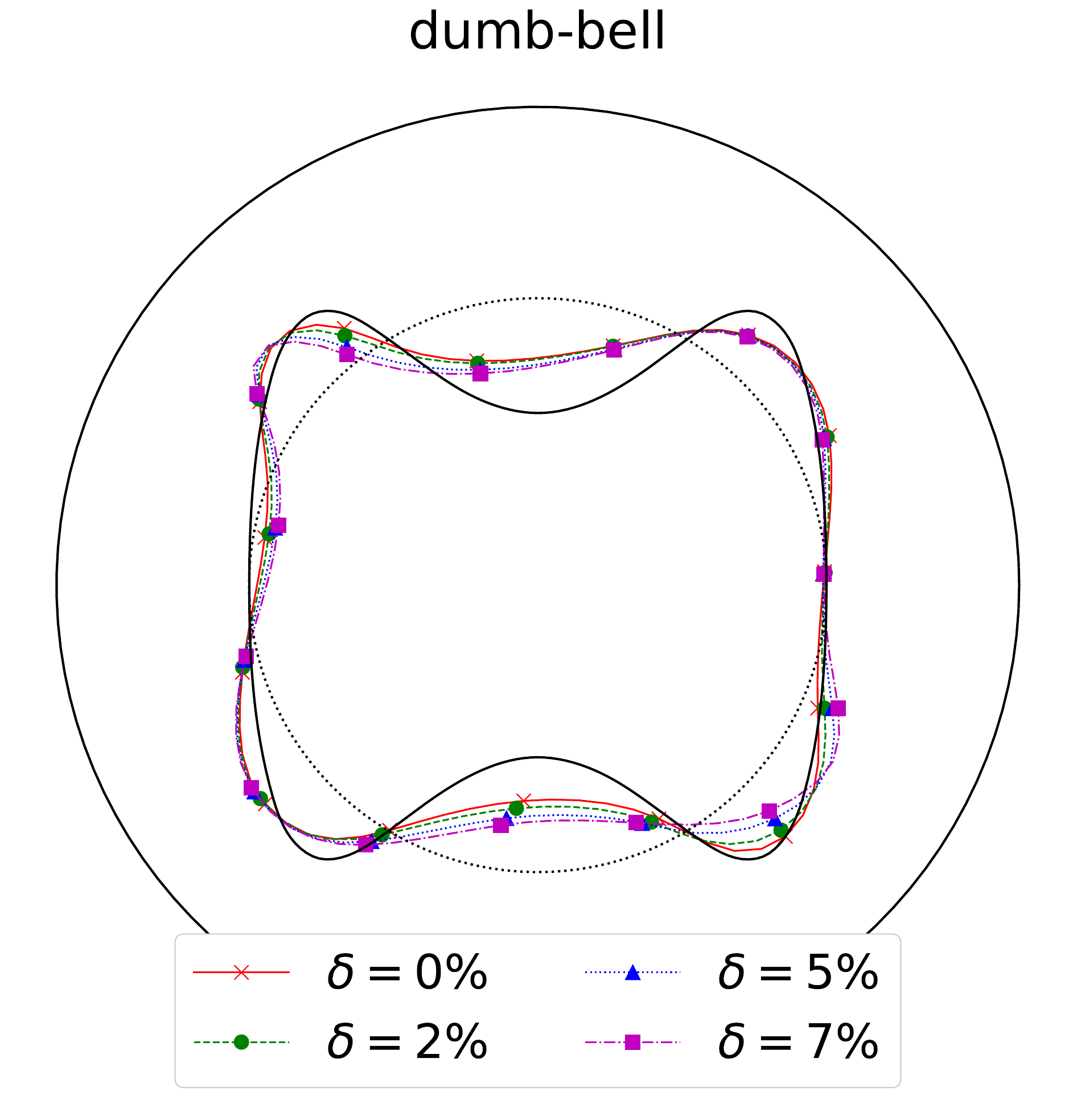}}\\[0.5em]
\resizebox{0.235\linewidth}{!}{\includegraphics{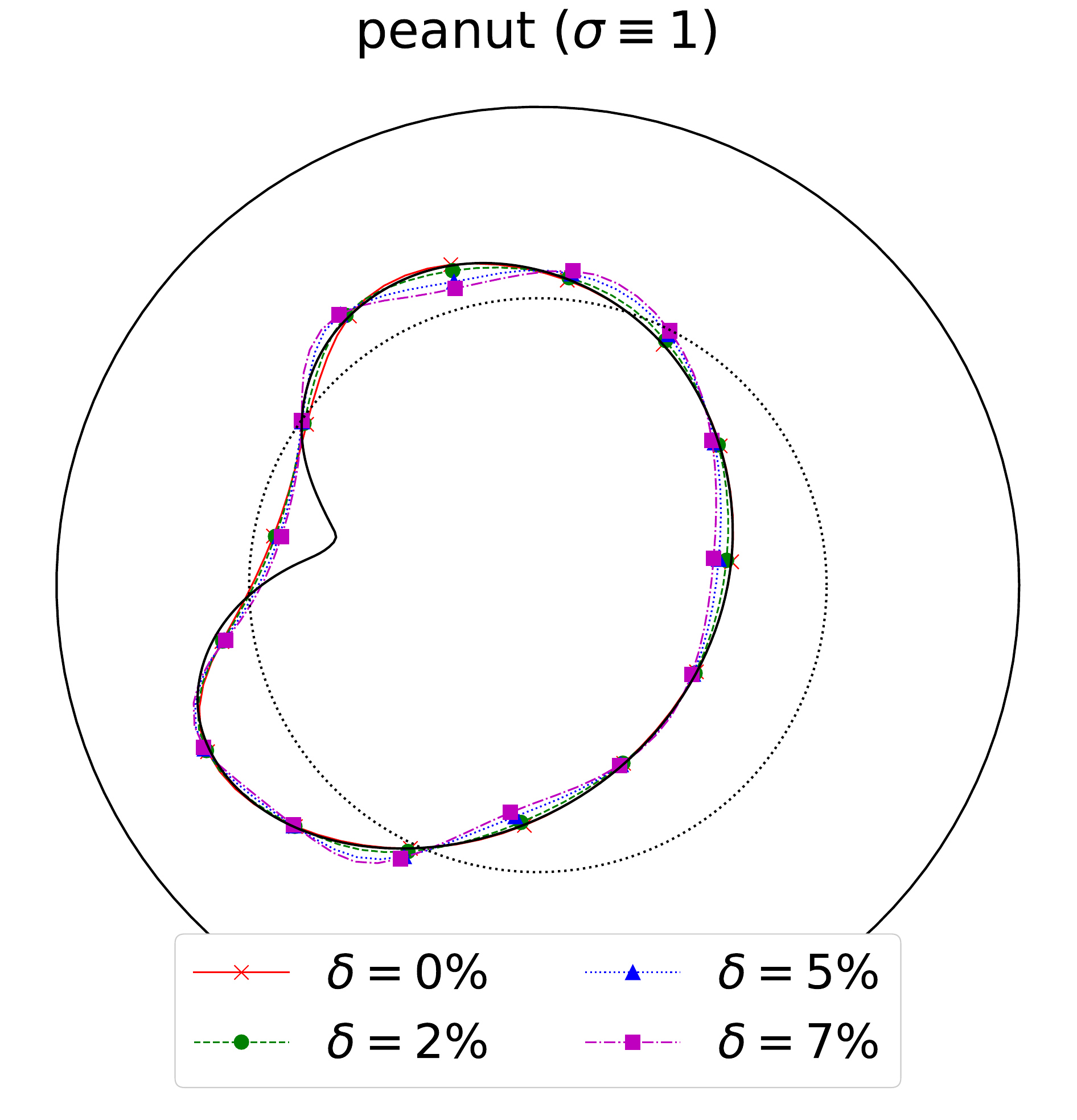}}\ 
\resizebox{0.235\linewidth}{!}{\includegraphics{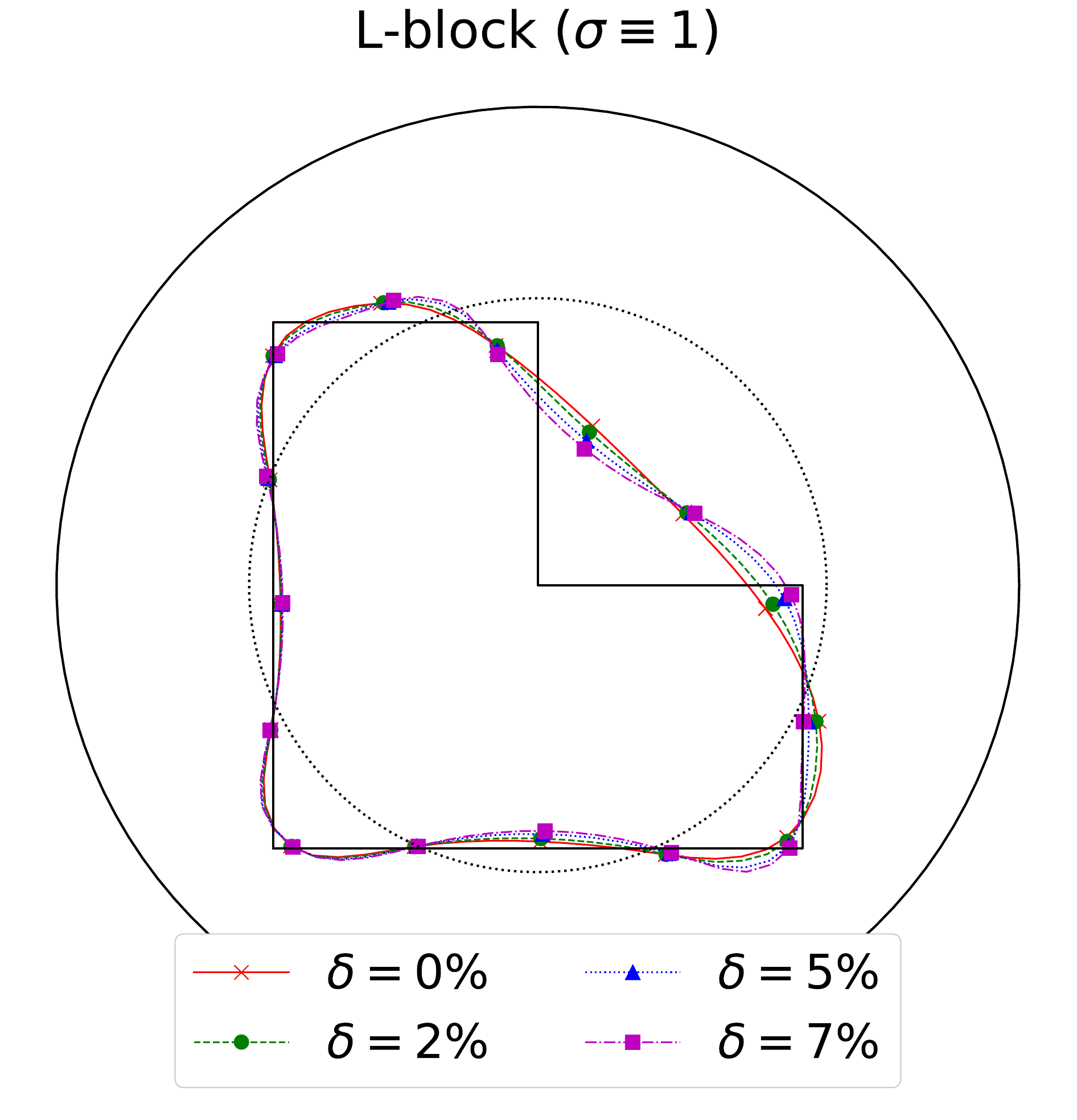}}\hfill
\resizebox{0.235\linewidth}{!}{\includegraphics{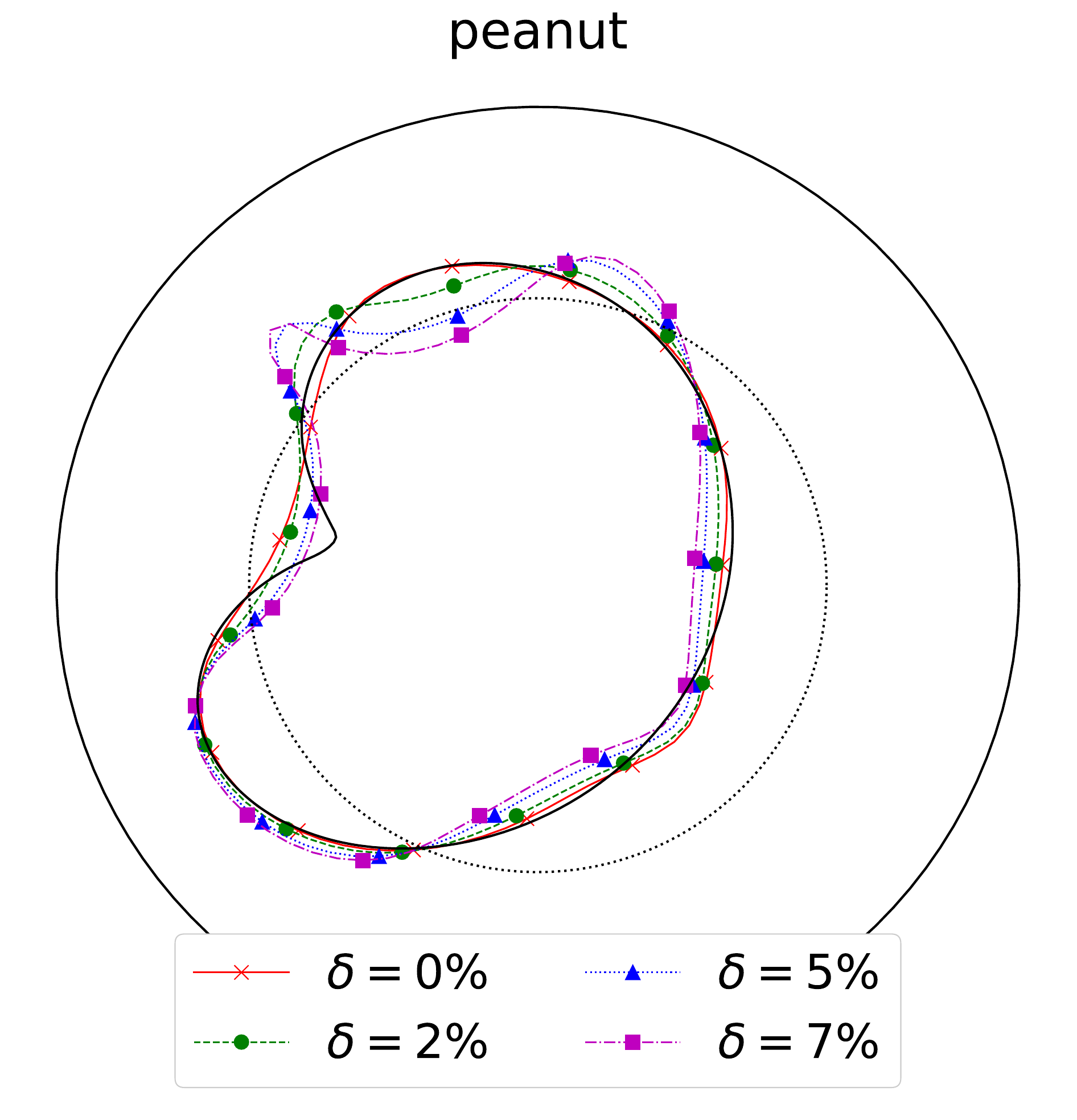}}\ 
\resizebox{0.235\linewidth}{!}{\includegraphics{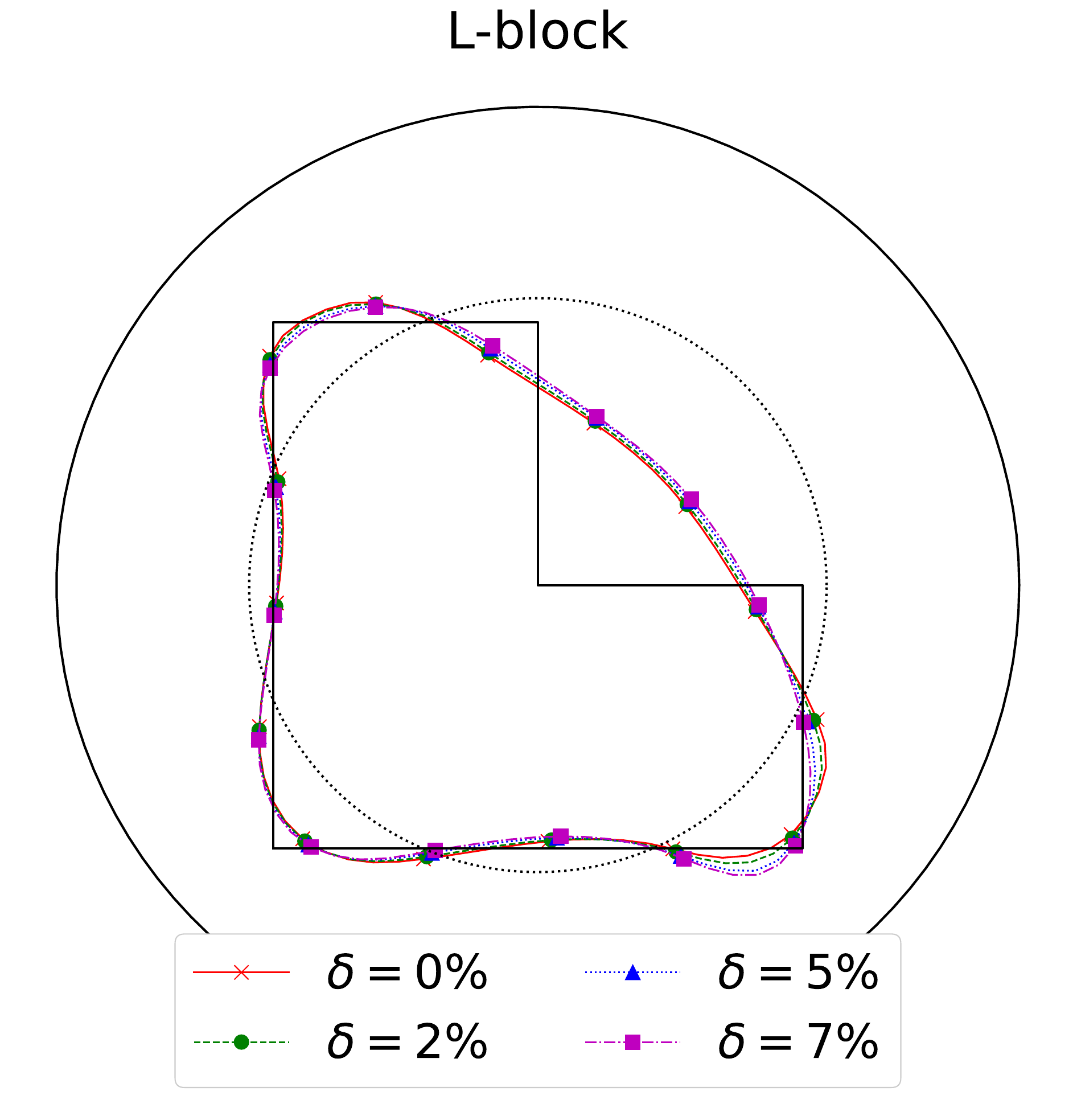}}
\caption{Results of the numerical experiments under exact and noisy measurements with noise levels $\delta = 2\%, 5\%$: left, constant $\sigma \equiv 1$; right, spatially varying $\sigma(x) = 1.1 + \sin(\pi x_{1})\sin(\pi x_{2})$.}
\label{fig:figure1}
\end{figure}
%------------------------------------------------------------------------------------
% 			FIGURE  2
%------------------------------------------------------------------------------------         
\begin{figure}[h!]
\centering
\resizebox{0.24\linewidth}{!}{\includegraphics{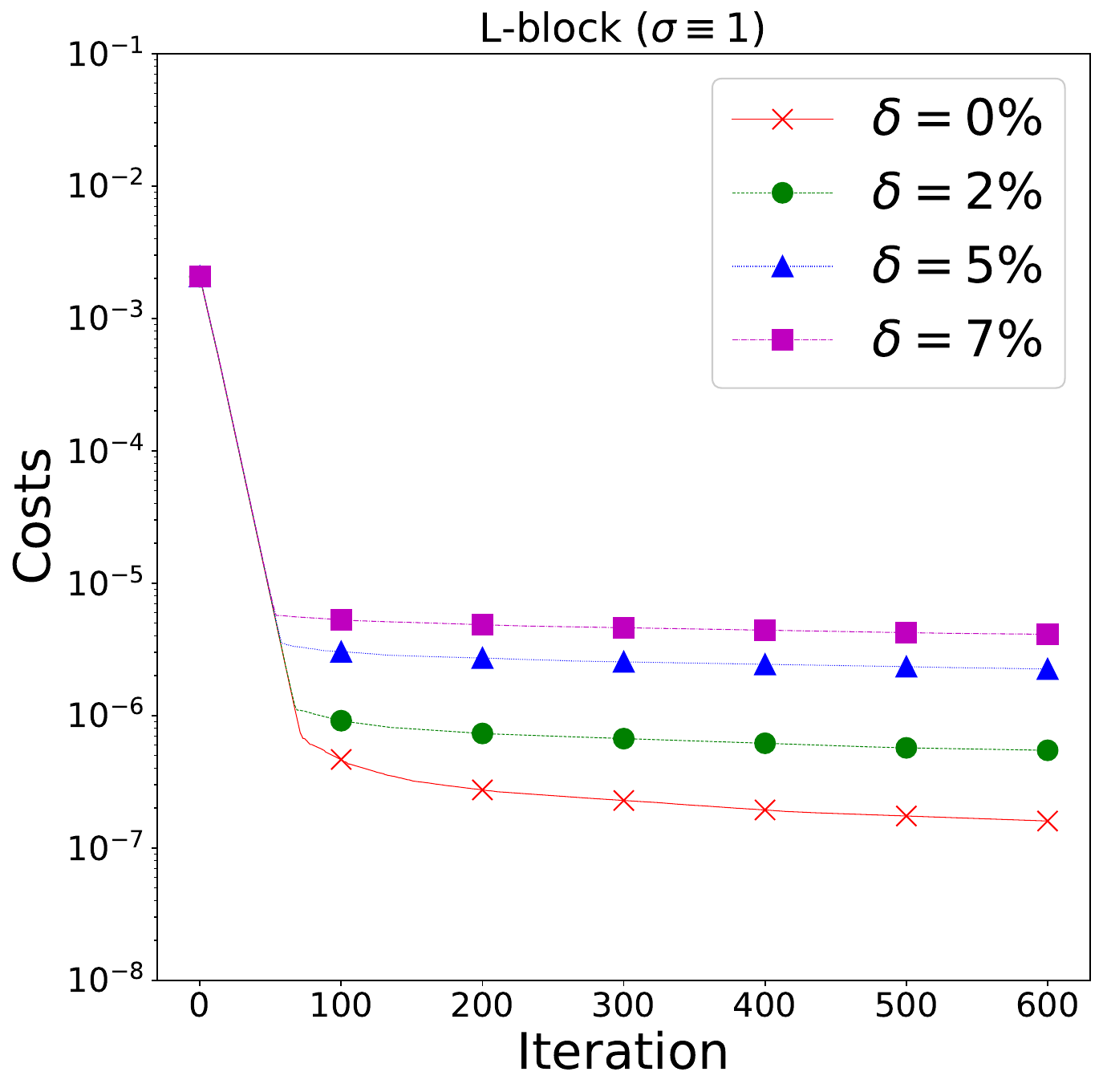}}\ 
\resizebox{0.24\linewidth}{!}{\includegraphics{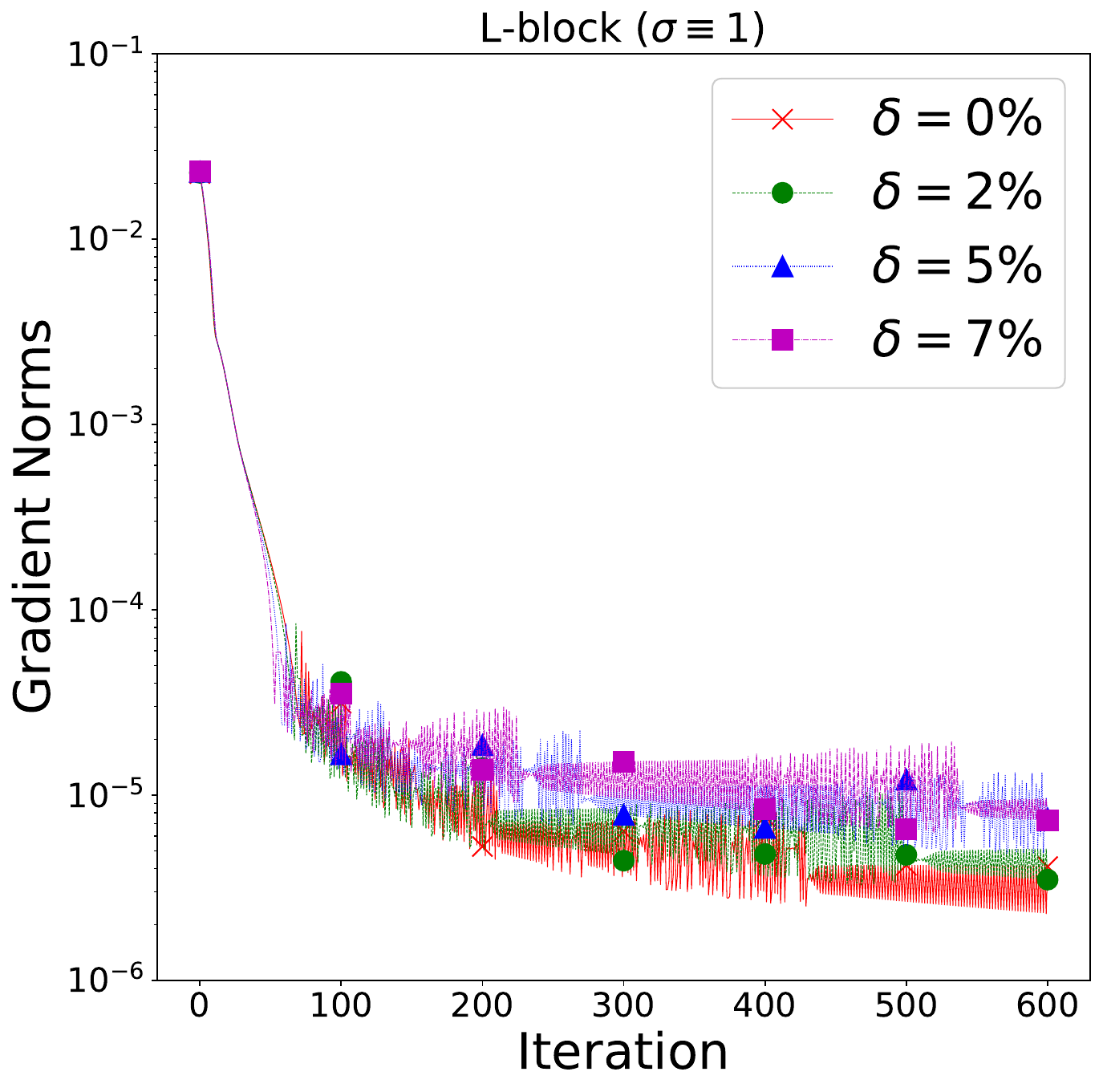}}\hfill
\resizebox{0.24\linewidth}{!}{\includegraphics{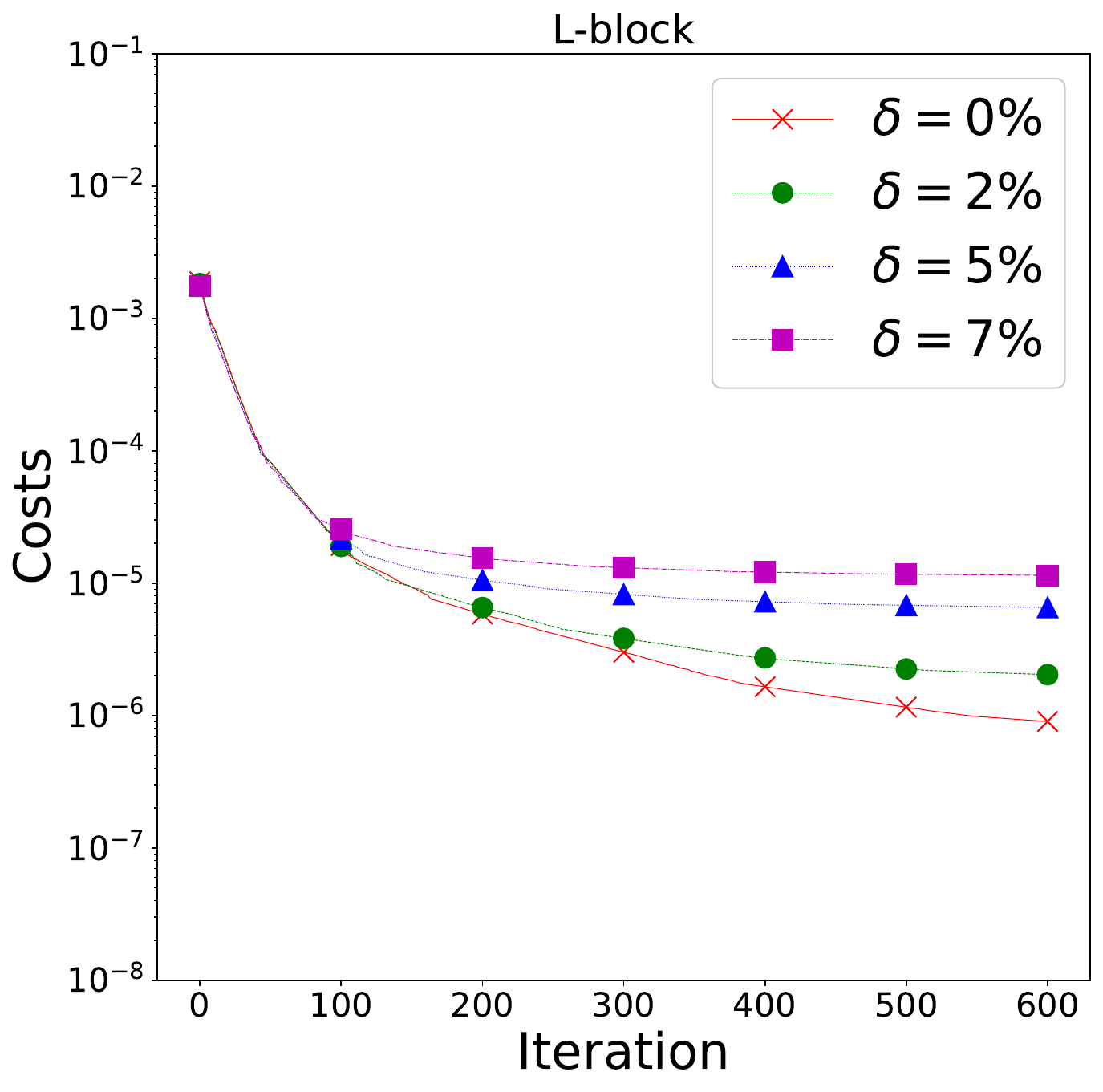}}\
\resizebox{0.24\linewidth}{!}{\includegraphics{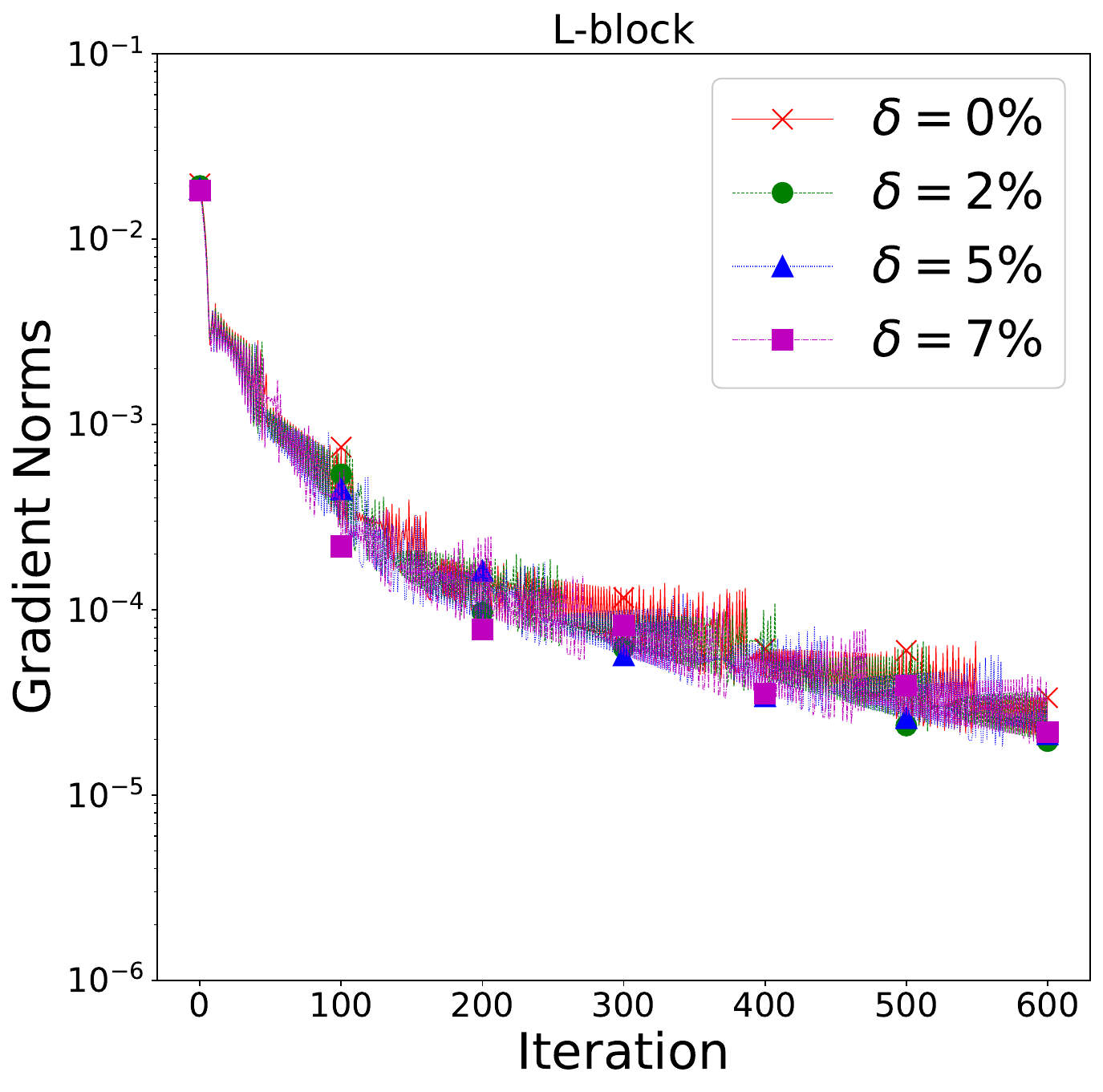}}\ 
\caption{Cost and gradient norm histories for the L-block case: left, constant $\sigma \equiv 1$; right, spatially varying $\sigma(x) = 1.1 + \sin(\pi x_{1})\sin(\pi x_{2})$}
\label{fig:costs_2d}
\end{figure}
%
%
%
%
%--------------------------  ALTERNATING DIRECTION METHOD OF OF MULTIPLIERS --------------------------
%--------------------------  ALTERNATING DIRECTION METHOD OF OF MULTIPLIERS --------------------------
%--------------------------  ALTERNATING DIRECTION METHOD OF OF MULTIPLIERS --------------------------
\subsection{Alternating Direction Method of Multipliers in shape optimization setting}\label{subsec:ADMM_algorithm}
Previously, we highlighted the difficulty of recovering concavities in non-convex obstacles due to noise and spatially varying diffusion and drift. To address this, we introduce an ADMM-based modification, applied to the 3D setting in Subsection~\ref{subsec:Numerical_Examples_3D}. Accordingly, we reformulate Problem~\ref{prob:shape_optimization} as follows:

\begin{problem}\label{prob:optimal_shape_problem}
Given constants $a$ and $b$ with $b \geqslant a$, find the shape $\omega^{\ast}$ in the admissible set
\[
	\ADMMset = \Bigl\{ \omega \in \smallOad \mid a \leqslant \realu \leqslant b \ \text{a.e. in } \varOmega \Bigr\}
\]
such that
\begin{equation}\label{eq:control} 
	\omega^{\ast} = \argmin_{\omega \in {\ADMMset}} J({\varOmega})
	:= \argmin_{\omega \in {\ADMMset}} \frac{1}{2} \intO{ \abs{\imagu}^{2} } .
\end{equation}	
\end{problem}

Note that identifying $\omega$ is equivalent to identifying $\varOmega$, since $\varOmega = D \setminus \overline{\omega}$, so defining one automatically determines the other.

To directly incorporate the inequality constraint in the cost function, we introduce the auxiliary variable $v$ which satisfies $v = \realu$ a.e. in ${\varOmega}$ and consider the set $\mathcal{G}$ defined as follows:
\[
	\mathcal{G} = \left\{ (\omega,v) \in \ADMMset \times L^{2}({\varOmega}) \mid \text{$\realu = v$ a.e. in ${\varOmega}$} \right\}.
\]

Then, we can rewrite \eqref{eq:control} as
\begin{equation}\label{eq:control_Uad}
	(\omega^{\ast}, v^{\ast}) = \argmin_{(\omega,v) \in \mathcal{G}} \left\{ J({\varOmega}) + U_{\mathcal{K}}(v) \right\},
\end{equation} 
where the set $\mathcal{K}$ is the closed, convex, non-empty set of $L^{2}({\varOmega})$ defined by
\[
	\mathcal{K} = \left\{ v \in L^{2}({\varOmega}) \mid a \leqslant v \leqslant b \ \ \text{a.e. in ${\varOmega}$} \right\},
\]
and $U_{\mathcal{K}}$ is the indicator functional of the set $\mathcal{K}$; that is, $U_{\mathcal{K}}(v) = 0$ if $v \in {\mathcal{K}}$, and $U_{\mathcal{K}}(v) = \infty$ if $v \in L^{2}({\varOmega}) \setminus \mathcal{K}$.

To apply ADMM to the control model \eqref{eq:control_Uad}, we first define the augmented Lagrangian functional.
This is possible since the minimum of problem \eqref{eq:control_Uad} corresponds to the saddle point of the following augmented Lagrangian functional
\begin{equation}\label{eq:augmented_lagrangian}
	{L}_{\beta}(\omega, v; \lambda) = J({\varOmega}) + U_{\mathcal{K}}(v) + \frac{\beta}{2} \intO{\vert \realu - v \vert^2} + \intO{\lambda (\realu - v)},
\end{equation}
where $\lambda$ is the Lagrange multiplier and $\beta > 0$ is a penalty parameter.
Following~\cite{CherratAfraitesRabago2025b,RabagoHadriAfraitesHendyZaky2024}, we fix the penalty parameter $\beta$. 
Although $\beta$ could be optimized via a bilevel approach~\cite{Dempe2020}, keeping it fixed simplifies the method and yields consistently good numerical results.

We find a saddle point of ${L}_{\beta}$ using an iterative ADMM procedure (Algorithm \ref{algo:ADMM_algorithm}).
Starting from initial values $\omega^{0} \in \ADMMset$ and $v^{0}, \lambda^{0} \in L^{2}({\varOmega})$, the algorithm generates a sequence of iterates $(\omega^{k}, v^{k}, \lambda^{k})$, for $k \in \mathbb{N}$, by solving the following minimization problems sequentially and alternately:
\begin{align}
    \omega^{k+1}    &= \argmin_{\omega \in \ADMMset} {L}_{\beta}(\omega, v^{k}; \lambda^{k}); \label{eq:controle}\tag{SP1} \\
    v^{k+1}         &= \argmin_{v \in L^{2}({\varOmega})} {L}_{\beta}(\omega^{k+1}, v; \lambda^{k}); \label{eq:etat}\tag{SP2} \\
    \lambda^{k+1}   &= \lambda^{k} + \beta (\realu^{k+1} - v^{k+1}), \qquad \text{where } \realu^{k+1} := \realu(\varOmega^{k+1}). \label{eq:parametre1}\tag{SP3} 
\end{align} 
 
\begin{algorithm}
\begin{enumerate}\itemsep0.1em 
	\item \textit{Input} Fix $\beta$, $a$, and $b$, and define the Cauchy pair $(f, g)$.
	\item \textit{Initialization} Choose an initial shape $\omega^{0}$ and set the initial values $v^{0}$ and $\lambda^{0}$.
	\item \textit{Iteration} For $k = 1, 2, \ldots$, compute $(\omega^{k}, v^{k}, \lambda^{k})$ using equations \eqref{eq:controle}--\eqref{eq:parametre1} through sequential computations:
	$
		\{v^{k}, \lambda^{k}\} \ \stackrel{\eqref{eq:controle}}{\longrightarrow} \ \omega^{k+1} 
		\ \stackrel{\eqref{eq:etat}}{\longrightarrow} \ v^{k+1}
		\ \stackrel{\eqref{eq:parametre1}}{\longrightarrow} \ \lambda^{k+1}.
	$
	\item \textit{Stop Test} Repeat \textit{Iteration} until convergence.
\end{enumerate}
\caption{ADMM algorithm for the solution of problem \eqref{eq:control}.}
\label{algo:ADMM_algorithm}
\end{algorithm}

The resolution of \eqref{eq:controle} and \eqref{eq:etat} is outlined in the next two subsections.
%%%%%%%%%%%%%%%%%%%%%%%%%%%%%%%%%%%%%%%%%%%%%%%%%%%%%%%%%%%%%%%%%%%%%%%%%%%%%%
%%%%%%%%%%%%%%%%%%%%%%%%%%%%%%%%%%%%%%%%%%%%%%%%%%%%%%%%%%%%%%%%%%%%%%%%%%%%%%
%%%%%%%%%%%%%%%%%%%%%%%%%%%%%%%%%%%%%%%%%%%%%%%%%%%%%%%%%%%%%%%%%%%%%%%%%%%%%%
\subsubsection{Solution of $\omega$-subproblem \eqref{eq:controle}}
We first consider equation~\eqref{eq:controle}, which minimizes ${L}_{\beta}$ with respect to $\omega$:
\[
 \omega^{k+1} = \argmin_{\omega \in \ADMMset} \left\{ J({\varOmega}) + U_{\mathcal{K}}(v^{k}) + \frac{\beta}{2} \intO{\vert \realu - v^{k} \vert^2} + \intO{\lambda^{k} (\realu - v^{k})} \right\}.
\]

Consider the following shape functional:
\[
\resizebox{\textwidth}{!}{$
Y^{k}({\varOmega}) := {L}_{\beta}(\omega, v^{k}; \lambda^{k}) = \frac{1}{2} \intO{\vert \imagu \vert^2} + \frac{\beta}{2} \intO{\vert \realu - v^{k} \vert^2} + \intO{\lambda^{k} (\realu - v^{k})}.
$}
\]

{The $\omega$-subproblem \eqref{eq:controle} requires the shape derivative of $Y^k(\varOmega)$, stated below.}
\begin{proposition}
	\label{prop:shape_gradient_of_Y}
	Let $\varOmega \in \Oad$ and $\VV \in \sfTheta$.
	Then, $Y^{k}(\varOmega)$ is shape differentiable, and its shape derivative at $\varOmega$ in the direction $\VV$ is
	\begin{equation} 
		d Y^{k}(\varOmega)[\VV] 
		%%%= \intG{G_{\dagger}^{k} \nn \cdot \VV}
		= \intG{\left( \sigma ( \dn{\realq} \dn{\imagu} 
		- \dn{\imagq} \dn{\realu} ) + \frac{\beta}{2} (v^{k})^2 - \lambda^{k} v^{k} \right) \Vn },
		\label{eq:shape_gradient_of_Y} 	
	\end{equation} 
	where $u = \realu + i \imagu$ satisfies \eqref{eq:complex_PDE} and the adjoint variable $q = \realq + i \imagq \in \HH^{2}(\varOmega) \cap \spaceV(\varOmega)$ uniquely solves the adjoint system
	\begin{equation} \label{eq:adjoint_problem:q}
	\left\{
	\begin{array}{rcll}
		\div{\sigma \nabla q} + \vect{b} \cdot \nabla q + (\divbb) q & = & i(\beta (\realu - v^{k}) + \lambda^{k}) - \imagu & \text{in } \varOmega, \\
		q & = & 0 & \text{on } \varGamma, \\
		\sigma \dn{q} + q \vect{b} \cdot \nn - i q & = & 0 & \text{on } \varSigma.
	\end{array}
	\right.
	\end{equation}	 
\end{proposition}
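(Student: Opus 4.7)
The plan is to mirror the strategy used for Proposition~\ref{prop:shape_gradients}, augmented to handle the two extra terms that appear in $Y^{k}(\varOmega)$ beyond those already present in $J(\varOmega)$. First I would apply Hadamard's formula to each of the three volume integrals defining $Y^{k}$, treating $v^{k}$ and $\lambda^{k}$ as fixed functions on $D$ whose material derivatives are simply $\VV\cdot\nabla v^{k}$ and $\VV\cdot\nabla \lambda^{k}$. Using the divergence identity \eqref{eq:divergence_identity} to convert the $\divVV$ terms into gradient and boundary contributions, and invoking $u = 0$ on $\varGamma$ (so that $|\imagu|^{2}$ vanishes and $\realu - v^{k}$ reduces to $-v^{k}$ there), the boundary part is already $\bigl(\tfrac{\beta}{2}(v^{k})^{2} - \lambda^{k} v^{k}\bigr)\Vn$, while the volume contributions involving $\VV\cdot\nabla v^{k}$ and $\VV\cdot\nabla\lambda^{k}$ cancel exactly against the matching contributions from $\dot{v}^{k}$ and $\dot{\lambda}^{k}$. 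This leaves an intermediate expression whose remaining volume integrals reduce to
\[
\intO{\bigl[\imagu\,\dot{u}_{2} + (\beta(\realu - v^{k}) + \lambda^{k})\dot{u}_{1}\bigr]} - \intO{\bigl[\imagu\,\VV\cdot\nabla\imagu + (\beta(\realu - v^{k}) + \lambda^{k})\VV\cdot\nabla\realu\bigr]}.
\]

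Next, I would introduce the adjoint $q$ defined by \eqref{eq:adjoint_problem:q} in order to eliminate $\dot{u}_{1}$ and $\dot{u}_{2}$. Testing the weak form of that adjoint equation against $\dot{u}$, combining with the material-derivative identity $a(\dot{u}, q) = \mathcal{M}(u, q)$ supplied by Lemma~\ref{lem:material_derivative_of_u}, then conjugating and taking imaginary parts, should yield the first representation
\[
\Im\{\mathcal{M}(u, q)\} = \intO{(\beta(\realu - v^{k}) + \lambda^{k})\dot{u}_{1} + \imagu\,\dot{u}_{2}}.
\]
In parallel, an Appendix~\ref{Appx:formulae}-style computation would give a boundary representation of $\mathcal{M}(u, q)$: splitting $\dot{u} = u' + \VV\cdot\nabla u$, invoking the linearized state equation for the shape derivative $u'$ (with $u' = -\dn{u}\Vn$ on $\varGamma$ and $\sigma\dn{u'} + i u' = 0$ on $\varSigma$), integrating by parts against the adjoint PDE for $q$, and checking that the $\varSigma$-contributions cancel through the matching Robin conditions, produces
\[
\mathcal{M}(u, q) = \intG{\sigma\dn{u}\dn{\overline{q}}\Vn} + \intO{\bigl[i(\beta(\realu - v^{k}) + \lambda^{k}) + \imagu\bigr]\VV\cdot\nabla u}.
\]
Matching the imaginary parts of the two representations and substituting the result back into the intermediate expression for $dY^{k}(\varOmega)[\VV]$ causes every remaining volume contribution to cancel, delivering exactly the boundary integral claimed in \eqref{eq:shape_gradient_of_Y}.

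The main obstacle is the Appendix-style derivation of $\mathcal{M}(u, q)$ in the presence of the genuinely complex right-hand side $R_{q} := i(\beta(\realu - v^{k}) + \lambda^{k}) - \imagu$ of the adjoint PDE. Unlike Proposition~\ref{prop:shape_gradients}, where the adjoint datum is the real function $\imagu$ and complex conjugation is invisible, the correct sign structure here only emerges once one notes that the chain of identities yields $a(\dot{u}, q) = -\intO{\overline{R_{q}}\,\dot{u}}$, so that $\overline{R_{q}}$ (not $R_{q}$) enters the volume integrand. This subtle distinction is precisely what produces the factor $i(\beta(\realu - v^{k}) + \lambda^{k}) + \imagu$ above and, in turn, what allows the non-adjoint volume terms to cancel cleanly in the final step. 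Once this bookkeeping is carefully tracked, the remaining manipulations are routine.
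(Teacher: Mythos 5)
Your proposal is correct and follows essentially the same route as the paper's proof: Hadamard's formula plus the divergence identity \eqref{eq:divergence_identity} give exactly the paper's intermediate expression \eqref{derivative:lagrangian_of_Y} (the boundary contribution $\bigl(\tfrac{\beta}{2}(v^{k})^{2}-\lambda^{k}v^{k}\bigr)\Vn$ together with the two volume integrals in $\dot{u}_1,\dot{u}_2$ and $\VV\cdot\nabla\realu,\VV\cdot\nabla\imagu$), the adjoint \eqref{eq:adjoint_problem:q} is tested with $\dot{u}$, the identity $a(\dot{u},q)=\mathcal{M}(u,q)$ from Lemma~\ref{lem:material_derivative_of_u} is invoked, and taking imaginary parts delivers \eqref{eq:shape_gradient_of_Y}; your sign bookkeeping via $\overline{R_{q}}$ is exactly what the paper does when it conjugates the adjoint equation. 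The only deviation is the sub-step producing the boundary representation of $\mathcal{M}(u,q)$: you split $\dot{u}=u'+\VV\cdot\nabla u$ and use the linearized problem for the shape derivative $u'$ (this works: $a(u',q)=0$ by the Robin cancellation on $\varSigma$ and $q=0$ on $\varGamma$, whence $-\intO{\overline{R_{q}}\,u'}=\intG{\sigma\dn{u}\dn{\overline{q}}\Vn}$), whereas the paper's Appendix~\ref{Appx:formulae} manipulates $\mathcal{M}$ directly through a divergence structure and never introduces $u'$, thereby avoiding the need to justify the existence of $u'$ and the decomposition of $\dot u$; since $u\in\HH^{2}(\varOmega)\cap\spaceV(\varOmega)$ holds under the standing assumptions, your variant is legitimate, merely heavier on regularity bookkeeping (the same remark applies to your use of $\nabla v^{k}$ and $\nabla\lambda^{k}$, which is harmless only because those terms cancel identically).
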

{Expression \eqref{eq:shape_gradient_of_Y} can be obtained similarly to the proof of Proposition~\ref{prop:shape_gradients}. Alternatively, a standard chain rule argument can be applied, using the material derivative approach, the identity \eqref{eq:divergence_identity}, and the adjoint method via the adjoint system \eqref{eq:adjoint_problem:q} to simplify the resulting terms. For brevity, the full derivation is omitted.}
\begin{remark}
By considering an adjoint that satisfies
\begin{equation}
\left\{
\begin{array}{rcll}
	\div{\sigma \nabla q} + \vect{b} \cdot \nabla q + (\divbb)q & = & (\beta (\realu - v^k) + \lambda^k) +i \imagu & \text{in } \varOmega, \\
	q & = & 0 & \text{on } \varGamma, \\
\sigma \dn{q} + q\,\vect{b} \cdot \nn - i q & = & 0 & \text{on } \varSigma,
\end{array}
\right.
\end{equation}
we obtain the equivalent form of the shape derivative $dY^k(\varOmega)[\VV]$:
\[
dY^k(\varOmega)[\VV] 
	= \intG{\left( -\sigma ( \dn{\realq }\dn{\realu } +\dn{\imagq }\dn{\imagu })   
			+ \frac{\beta}{2} (v^k)^2 - \lambda^k v^k \right) \Vn }.
\]
\end{remark}
The preceding formulation of the shape gradients and their adjoint systems yields an expression that avoids material derivatives and provides a natural choice. 
Here, we propose an alternative formulation using a different set of adjoint variables: one for the derivative of $\imagu$ and one for $\realu$. 
The resulting shape gradient of $Y^{k}$ is given in the following proposition:
\begin{proposition}[Another shape gradient structure of $Y^{k}(\varOmega)$]
	\label{prop:another_shape_gradient_of_Y}
	Let $\varOmega \in \Oad$ and $\VV \in \sfTheta$.
	Then, $Y^{k}(\varOmega)$ is shape differentiable, and its shape derivative at $\varOmega$ in the direction $\VV$ is
	\begin{equation}\label{eq:another_shape_gradient_of_Y} 	
	\begin{aligned}
		d Y^{k}(\varOmega)[\VV] 
			&= \intG{ G(u,p) \Vn }
			- \intG{ \sigma ( \dn{\realLambda} \dn{\realu}  + \dn{\imagLambda} \dn{\imagu} ) \Vn }\\
			&\qquad + \intG{ \left( \frac{\beta}{2} (v^{k})^2 - \lambda^{k} v^{k} \right) \Vn },
	\end{aligned}
	\end{equation} 
	where $G(u,p)$ is given in \eqref{eq:shape_gradient} while the adjoint variable ${\Lambda} = \realLambda + i \imagLambda \in \HH^{2}(\varOmega) \cap \spaceV(\varOmega)$ uniquely solves the adjoint system
	\begin{equation} \label{eq:adjoint_problem:Lambda}
	\left\{
	\begin{array}{rcll}
		\div{\sigma \nabla {\Lambda}} + \vect{b} \cdot \nabla {\Lambda} + (\divbb) {\Lambda} & = & \beta (\realu - v^{k}) + \lambda^{k} & \text{in } \varOmega, \\
		{\Lambda} & = & 0 & \text{on } \varGamma, \\
		\sigma \dn{{\Lambda}} + {\Lambda} \vect{b} \cdot \nn - i {\Lambda} & = & 0 & \text{on } \varSigma.
	\end{array}
	\right.
	\end{equation}	 
\end{proposition}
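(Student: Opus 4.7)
The plan is to split the interior contributions to $dY^{k}(\varOmega)[\VV]$ between two adjoints: the existing $p$ from \eqref{eq:adjoint_system_p}, which absorbs the $\imagu\,\dot{u}_{2}$ terms exactly as in Proposition~\ref{prop:shape_gradients}, and the new adjoint $\Lambda$ from \eqref{eq:adjoint_problem:Lambda}, whose source $\beta(\realu - v^{k}) + \lambda^{k}$ is tailored so that it pairs, via testing, with the multiplier of $\dot{u}_{1}$ arising from the penalty and Lagrangian terms in $Y^{k}$. Combined with the Hadamard boundary residuals generated by $v^{k}$ and $\lambda^{k}$, this should reproduce \eqref{eq:another_shape_gradient_of_Y}.

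I would first apply Hadamard's transport formula to each of the three integrals in $Y^{k}(\varOmega)$, treating $v^{k}$ and $\lambda^{k}$ as fixed ambient data. Using $\imagu = \realu = 0$ on $\varGamma$ together with $\VV \equiv 0$ in a neighborhood of $\varSigma$, the pure-boundary residuals collapse to $\intG{\bigl(\tfrac{\beta}{2}(v^{k})^{2} - \lambda^{k}v^{k}\bigr)\Vn}$, leaving the interior remainder
\[
\intO{\imagu\,\dot{u}_{2}} - \intO{\imagu\,\VV\cdot\nabla\imagu} + \intO{(\beta(\realu - v^{k}) + \lambda^{k})\,\dot{u}_{1}} - \intO{(\beta(\realu - v^{k}) + \lambda^{k})\,\VV\cdot\nabla\realu}.
\]
The first pair of terms is precisely what Proposition~\ref{prop:shape_gradients} (via \eqref{eq:identity_M} and the imaginary-part manipulation) identifies with $\intG{G(u,p)\Vn}$, so that contribution is already accounted for.

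For the second pair, I would mimic the argument of Proposition~\ref{prop:shape_gradients} with $\Lambda$ in place of $p$. Testing the weak form of \eqref{eq:adjoint_problem:Lambda} with $\varphi = \dot{u}$, conjugating, and comparing with \eqref{eq:material_derivative_of_u} at $\psi = \Lambda$ yields $-\mathcal{M}(u,\Lambda) = \intO{(\beta(\realu - v^{k}) + \lambda^{k})\,\dot{u}}$, and the integration-by-parts manipulation of Appendix~\ref{Appx:formulae}, performed with source $\beta(\realu - v^{k}) + \lambda^{k}$ instead of $\imagu$, rewrites $\mathcal{M}(u,\Lambda)$ as $-\intO{(\beta(\realu - v^{k}) + \lambda^{k})\,\VV\cdot\nabla u} + \intG{\sigma\dn{u}\dn{\overline{\Lambda}}\Vn}$. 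Since the source is real, taking \emph{real} parts and using $\Re(\sigma\dn{u}\dn{\overline{\Lambda}}) = \sigma(\dn{\realu}\dn{\realLambda} + \dn{\imagu}\dn{\imagLambda})$ produces $\intO{(\beta(\realu - v^{k}) + \lambda^{k})\,\dot{u}_{1}} - \intO{(\beta(\realu - v^{k}) + \lambda^{k})\,\VV\cdot\nabla\realu} = -\intG{\sigma(\dn{\realLambda}\dn{\realu} + \dn{\imagLambda}\dn{\imagu})\Vn}$. Summing the three boundary contributions then gives \eqref{eq:another_shape_gradient_of_Y}. The main obstacle I anticipate is bookkeeping for the Lagrangian data $v^{k}$ and $\lambda^{k}$: a consistent convention must be fixed (for instance, extending them as shape-independent functions in a neighborhood of $\overline{\varOmega}$) so that the transport terms $\VV\cdot\nabla v^{k}$ and $\VV\cdot\nabla\lambda^{k}$ are genuinely absent from the final expression; well-posedness of \eqref{eq:adjoint_problem:Lambda} in $\HH^{2}(\varOmega)\cap\spaceV(\varOmega)$ then follows from the same complex Lax--Milgram and elliptic regularity argument used for \eqref{eq:adjoint_system_p}.
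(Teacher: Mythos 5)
Your proposal is correct and follows essentially the route the paper intends (the proof is omitted there but stated to be analogous to Proposition~\ref{prop:shape_gradient_of_Y} in Appendix~\ref{Appx:proofs}): you reproduce the Hadamard decomposition \eqref{derivative:lagrangian_of_Y}, identify the $\imagu$-pair with $\intG{G(u,p)\Vn}$ via Proposition~\ref{prop:shape_gradients}, and handle the real-source pair by testing the $\Lambda$-adjoint with $\dot{u}$, comparing with \eqref{eq:material_derivative_of_u}, and taking real parts — all sign conventions and the identity $\Re\{\dn{u}\,\dn{\overline{\Lambda}}\} = \dn{\realu}\dn{\realLambda} + \dn{\imagu}\dn{\imagLambda}$ check out. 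The remark about treating $v^{k}$ and $\lambda^{k}$ as fixed, shape-independent data is consistent with the paper's own treatment in Appendix~\ref{Appx:proofs}.
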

{The proof is omitted as it follows standard arguments.}

From \eqref{eq:another_shape_gradient_of_Y}, the first integral is independent of the adjoint $\Lambda$, while the second is independent of $p$. This motivates the use of “partial” shape gradients \cite{HIKKP2009}, originally applied to the exterior Bernoulli free boundary problem. Accordingly, we define four shape gradients (Table~\ref{tab:radii_and_beta}) for computing deformations in our numerical scheme (Algorithm~\ref{algo:CCBM--ADMM--SGBD}). Their benefit is demonstrated numerically in Subsection~\ref{subsec:effect_of_the_choice_of_shape_gradient}, highlighting the advantage of selectively applying the adjoint method and using only part of the exact gradient.

%
%%%%%%%%%%%%%%%%%%%%%%%%%%%%%%%%%%%%%%%%%%%%%%%%%%%%%%%%%%%%%%%%%%%%%%%%%%%%%%
%%%%%%%%%%%%%%%%%%%%%%%%%%%%%%%%%%%%%%%%%%%%%%%%%%%%%%%%%%%%%%%%%%%%%%%%%%%%%%
%%%%%%%%%%%%%%%%%%%%%%%%%%%%%%%%%%%%%%%%%%%%%%%%%%%%%%%%%%%%%%%%%%%%%%%%%%%%%%
\subsubsection{Extension and regularization of the deformation field}
The shape gradient of $Y$, as for $J$, is supported only on the boundary $\varGamma$ and may be insufficiently smooth for finite element implementation. 
To regularize the descent vector over $\varOmega$, we employ the same extension--regularization technique introduced in Section~\ref{subsec:conventional_scheme} and apply the Sobolev gradient-based descent (SGBD) algorithm (Algorithm~\ref{algo:SGBD_algorithm}) to solve \eqref{eq:controle}.

\begin{algorithm} 
\caption{SGBD algorithm for the $\omega$-subproblem \eqref{eq:controle}}\label{algo:SGBD_algorithm} 
\begin{enumerate}
 \item \textit{Input:} Fix $c_{b}$, $\mu$, $\beta$, $a$, $b$, and $\varepsilon$. Set $\lambda^{k}$, and initialize $\varOmega^{k}_{0} = \varOmega^{k}$, $u^{k}_{0} = u^{k}$, $v^{k}_{0} = v^{k}$. 	
 \item \textit{Iteration:} For $m = 0, 1, 2, \ldots$:
 \begin{enumerate}\itemsep0.1em  
  \item[2.1] Solve \eqref{eq:weak_form_of_state} and the adjoint problem \eqref{eq:adjoint_problem:q} on the current domain $\varOmega^{k}_{m}$.  
  \item[2.2] Compute $\VV_{m}^{k} = \VV$, where $\VV \in H^{1}_{\varSigma,0}(\varOmega^{k}_{m})^d$ solves \eqref{eq:Sobolev_gradient_computation} with $G$ replaced by the chosen shape gradient ($G_q^k$, $G_\sharp^k$, $G_p^k$, or $G_\Lambda^k$).  
  \item[2.3] Set $t^{k} = \mu J^{k}(\varOmega^{k}_{m}) / \|\VV_{m}^{k}\|_{H^{1}(\varOmega^{k}_{m})^d}$ and update $\varOmega^{k}_{m+1} = \{ x + t^{k} \VV_{m}^{k}(x) \mid x \in \varOmega^{k}_{m} \}$.
 \end{enumerate}
 \item \textit{Stopping criterion:} Repeat the iteration while $\| dY^{k}(\varOmega^{k}_{m})[\VV_{m}^{k}] \| \geqslant \varepsilon$.  
 \item \textit{Output:} $\varOmega^{k+1} = \varOmega^{k}_{m+1}$.
\end{enumerate}
\end{algorithm}

In Step 2.3 of Algorithm~\ref{algo:SGBD_algorithm}, the step size $t^{k}$ is computed using the same strategy in Section~\ref{subsec:conventional_scheme}.
%
%
%
%
%
%%%%%%%%%%%%%%%%%%%%%%%%%%%%%%%%%%%%%%%%%%%%%%%%%%%%%%%%%%%%%%%%%%%%%%%%%%%%%%
%%%%%%%%%%%%%%%%%%%%%%%%%%%%%%%%%%%%%%%%%%%%%%%%%%%%%%%%%%%%%%%%%%%%%%%%%%%%%%
%%%%%%%%%%%%%%%%%%%%%%%%%%%%%%%%%%%%%%%%%%%%%%%%%%%%%%%%%%%%%%%%%%%%%%%%%%%%%%
\subsubsection{Solution of the $v$-subproblem \eqref{eq:etat}}
We now consider equation~\eqref{eq:etat}, which minimizes ${L}_{\beta}$ with respect to $v$:
\[
\resizebox{\textwidth}{!}{$
\begin{aligned}
v^{k+1} &= \argmin_{v \in L^{2}({\varOmega})}
\Big\{ J(\varOmega^{k+1})+U_{\mathcal{K}}(v)
+ \frac{\beta}{2}  \intO{\vert {\realu^{k+1}}-v \vert^2} 
+ \intO{\lambda^{k}  ( {\realu^{k+1}}-v )} 
\Big\}.
\end{aligned}
$}
\]
Applying the projection method, we obtain $v^{k+1} = P_{\mathcal{K}}(\realu^{k+1} + \lambda^{k}/\beta)$, where $P_{\mathcal{K}}(\varphi) = \max(a, \min(b, \varphi))$ for all $\varphi \in L^{2}(\varOmega)$ denotes the projection onto the admissible set $\mathcal{K}$.
%%%%%%%%%%%%%%%%%%%%%%%%%%%%%%%%%%%%%%%%%%%%%%%%%%%%%%%%%%%%%%%%%%%%%%%%%%%%%%
%%%%%%%%%%%%%%%%%%%%%%%%%%%%%%%%%%%%%%%%%%%%%%%%%%%%%%%%%%%%%%%%%%%%%%%%%%%%%%
%%%%%%%%%%%%%%%%%%%%%%%%%%%%%%%%%%%%%%%%%%%%%%%%%%%%%%%%%%%%%%%%%%%%%%%%%%%%%%
\subsubsection{CCBM--ADMM--SGBD algorithm}
Finally, we modify Algorithm~\ref{algo:ADMM_algorithm} to solve Problem~\ref{prob:optimal_shape_problem} with an inequality constraint on the real part of the state. Specifically, it is refined into the nested, iterative CCBM--ADMM--SGBD scheme for the optimal control problem \eqref{eq:control_Uad}, as detailed in Algorithm~\ref{algo:CCBM--ADMM--SGBD}:
\begin{algorithm}[htp!] 
\caption{CCBM--ADMM--SGBD}\label{algo:CCBM--ADMM--SGBD}
\begin{enumerate} \itemsep0.1em 
 \item \textit{Initialization:} Specify $(f,g)$, fix $N \in \mathbb{N}$, $\beta$, $a$, $b$, $\mu$, $c_{b}$, and $\varepsilon$, and choose $\omega^{0}$, $v^{0}$, and $\lambda^{0}$.
\item \textit{Iteration:} For $k = 0, \ldots, N$:
     \begin{enumerate}\itemsep0.1em
      \item[2.1] Compute the state solution $u^{k}$ of \eqref{eq:weak_form_of_state} on the current domain and set $\realu^{k} = \Re\{u^{k}\}$, the solution of \eqref{eq:real_part} associated with $\omega^{k}$.
      \item[2.2] Solve the adjoint state equation(s) corresponding to the chosen shape gradient.
      \item[2.3] Update $\omega^{k+1}$ using the gradient-descent method in Algorithm~\ref{algo:SGBD_algorithm}.
      \item[2.4] Update $v^{k+1}$ as $\displaystyle v^{k+1} = \max \big( a, \min (\realu^{k+1} + \lambda^{k}/\beta, b) \big)$.
      \item[2.5] Update $\lambda^{k+1} = \lambda^{k} + \beta (\realu^{k+1} - v^{k+1})$.
     \end{enumerate}
 \item \textit{Stopping criterion:} Repeat the iteration until convergence.
\end{enumerate}
\end{algorithm}

Algorithm~\ref{algo:CCBM--ADMM--SGBD} naturally handles noisy data. In the following experiments, we use the CCBM–ADMM–SGBD scheme with the extension-regularization method from Subsection~\ref{subsec:conventional_scheme}, which suffices for accurate reconstructions under moderate noise. Additional regularization terms, such as perimeter, surface area, or volume, are unnecessary here.
%
%
%
%--------------------------  PRELIMINARY NUMERICAL TESTS --------------------------
%--------------------------  PRELIMINARY NUMERICAL TESTS --------------------------
%--------------------------  PRELIMINARY NUMERICAL TESTS --------------------------
\subsection{Effect of adjoint formulation and partial gradient selection}
\label{subsec:effect_of_the_choice_of_shape_gradient} 
We evaluate the CCBM--ADMM--SGBD (or CCBM--ADMM) scheme against the conventional CCBM approach using three exact obstacle geometries. 
Figure~\ref{fig:exact3d} presents the geometries, the initial guess, and the corresponding mesh profiles. 
Again, inverse crimes are avoided in the reconstruction, following the same strategy as before.
%------------------------------------------------------------------------------------
% 			FIGURE  3
%------------------------------------------------------------------------------------         
\begin{figure}[h!]
\centering
\resizebox{0.2\linewidth}{!}{\includegraphics{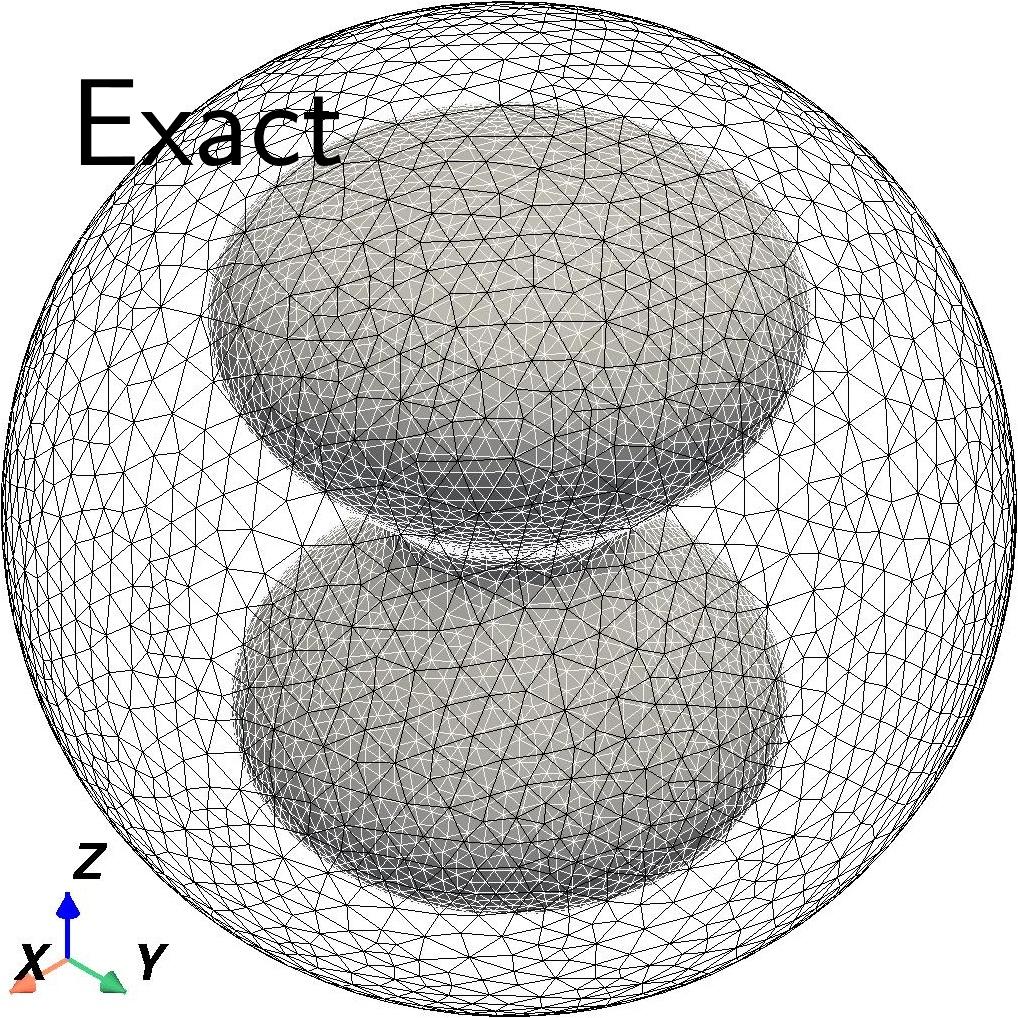}}\ \ 
\resizebox{0.2\linewidth}{!}{\includegraphics{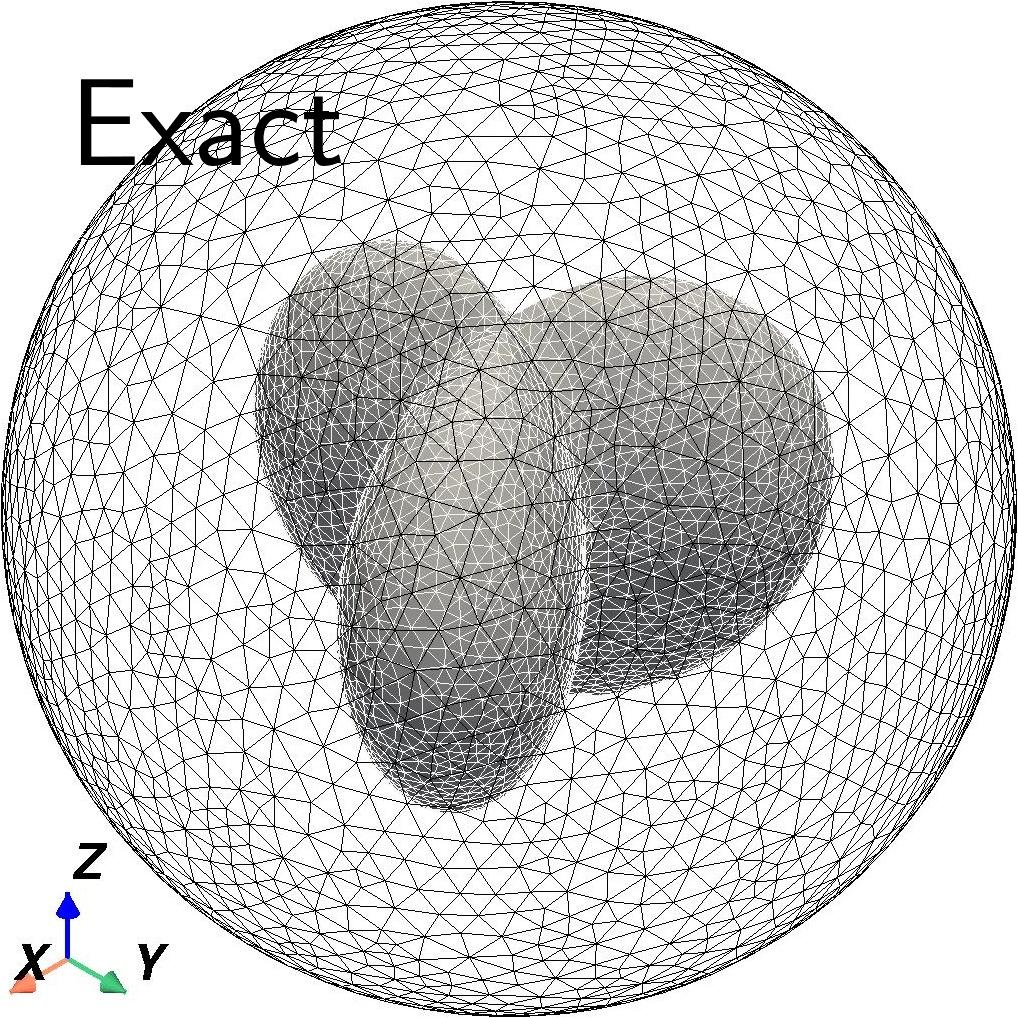}}\ \
\resizebox{0.2\linewidth}{!}{\includegraphics{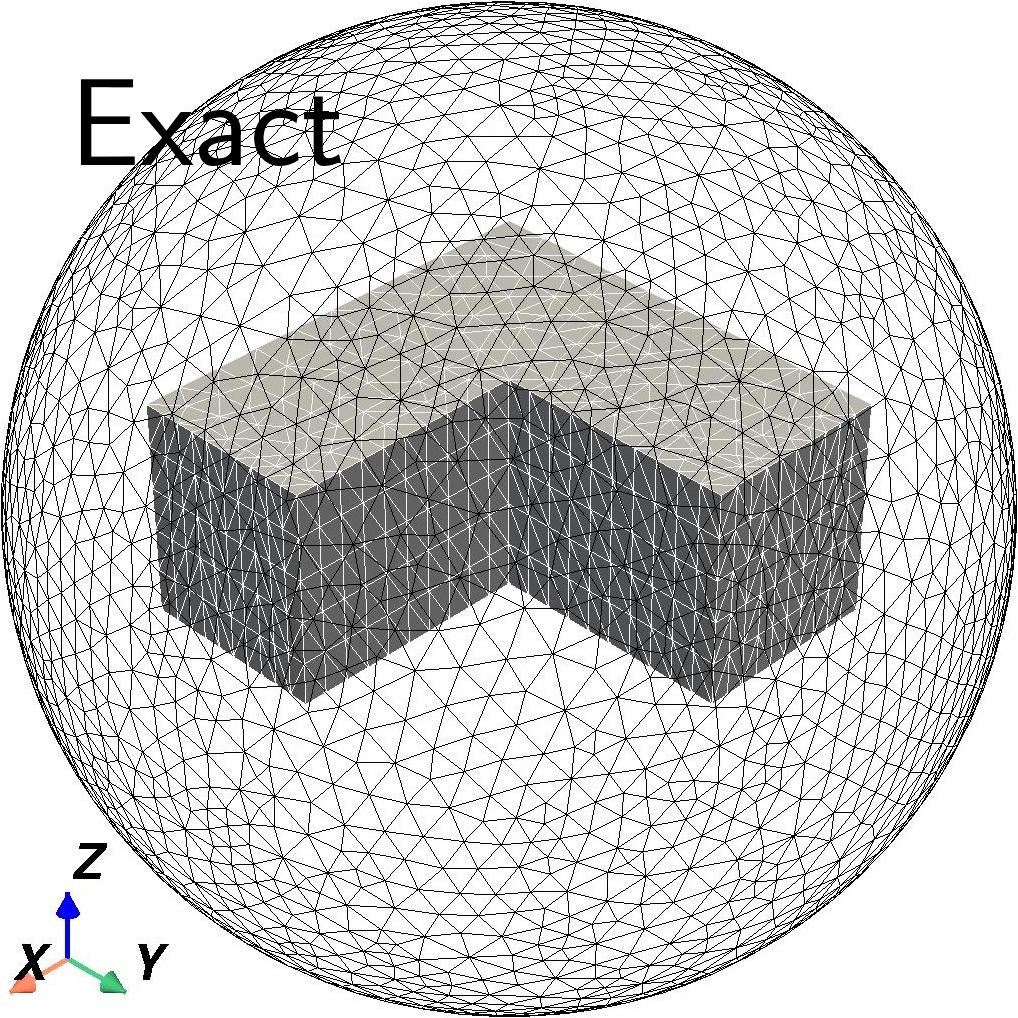}}\ \
\resizebox{0.2\linewidth}{!}{\includegraphics{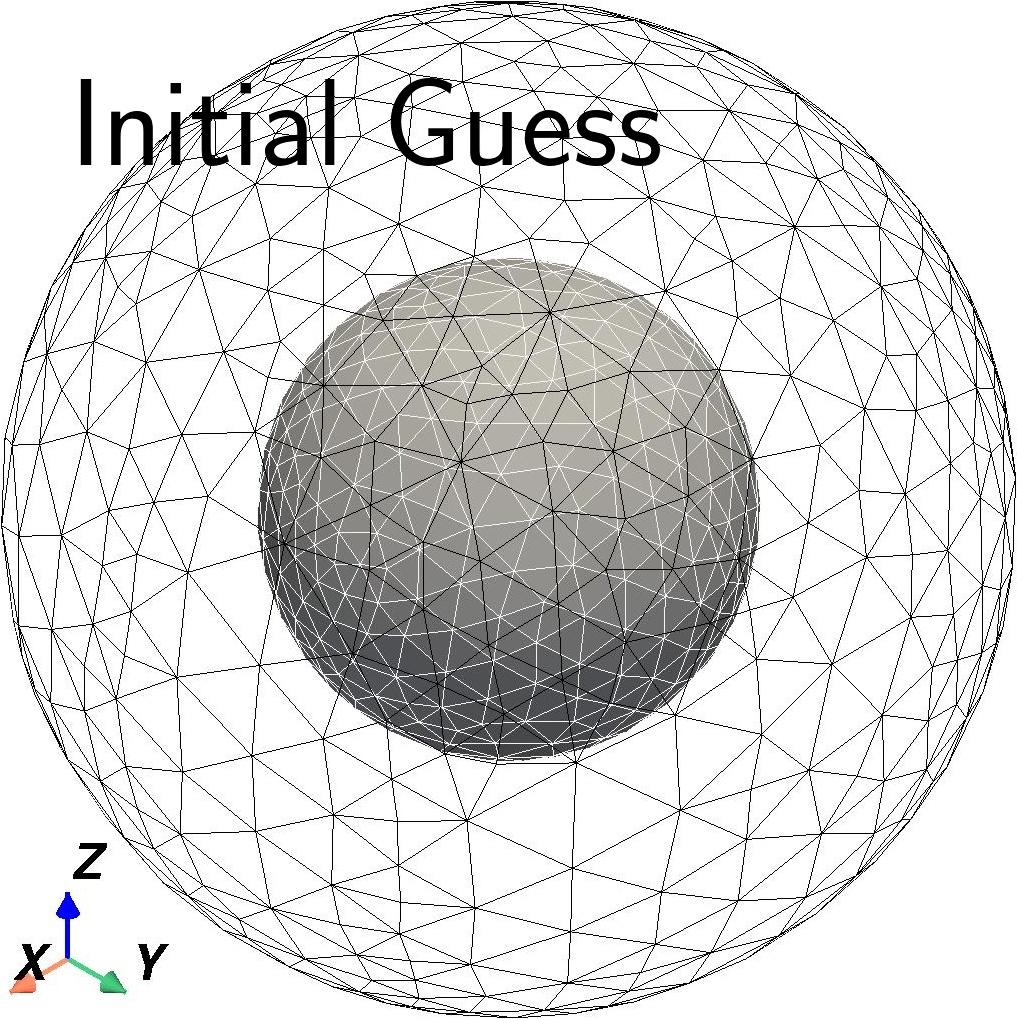}}
\caption{Geometry and mesh of the exact cavities (first three columns) and an initial guess with radius $r = 0.5$.}
\label{fig:exact3d}
\end{figure}

Before highlighting the advantages of the proposed CCBM--ADMM scheme over the conventional CCBM, we first assess the impact of using partial shape gradients, motivated by the adjoint method’s application.

We consider the unit ball $D = B_1(0) \subset \mathbb{R}^3$ and write $x=(x_1,x_2,x_3)$.  
The coefficients are defined as $\sigma(x) = 1.1 + \prod_{i=1}^3 \sin(\pi x_i)$ and $\mathbf{b}(x) = (1,1,1) + 0.5\,x$, with boundary data $g(x) = e^{|x|^2}$ prescribed on $\partial D$.  
For the preliminary tests, we use an L-block-shaped obstacle and set the parameters as follows:  
$N = 2400$, $\lambda^0 = 0.001$, $a = 0.5 \min u(\varOmega \setminus \overline{\omega}^{\star})$, $b = 1.5 \max u(\varOmega \setminus \overline{\omega}^{\star})$, $v^0 = 1$, $\varepsilon = 10^{-6}$, and $\omega^0 = B_{r}(0)$ with $r \in \{0.55, 0.575, 0.6\}$.  
For each method, the selected radii, chosen from the tested range, yield the most accurate reconstructions.
We use exact measurements to clearly observe the impact of the method employed in the CCBM--ADMM scheme, whose other specifics are summarized in Table~\ref{tab:radii_and_beta}.
\begin{table}[h!]
\centering
\renewcommand{\arraystretch}{1.5} % increases vertical spacing by 50%
\resizebox{\textwidth}{!}{%
\begin{tabular}{|c|l|c|c|c|}
\hline
 		& Kernel 			& $r$ 		& $\beta$ \\ \hline
Method 1 & $G_{q}^{k} := \sigma ( \dn{\realq} \dn{\imagu}  - \dn{\imagq} \dn{\realu} ) + \frac{\beta}{2} (v^{k})^2 - \lambda^{k} v^{k}$ 		& $0.600$ 	& $0.010$ \\ \hline
Method 2 & $G_{\sharp}^{k} := G(u,p) - \sigma ( \dn{\realLambda} \dn{\realu}  + \dn{\imagLambda} \dn{\imagu} ) 
					+ \frac{\beta}{2} (v^{k})^2 - \lambda^{k} v^{k}$ 	& $0.575$ 	& $0.010$ \\ \hline
Method 3 & $G_{p}^{k} := G(u,p) + \frac{\beta}{2} (v^{k})^2 - \lambda^{k} v^{k}$ 		& $0.550$		& $0.005$ \\ \hline
Method 4 & $G_{\Lambda}^{k} := - \sigma ( \dn{\realLambda} \dn{\realu}  + \dn{\imagLambda} \dn{\imagu} ) 
					+ \frac{\beta}{2} (v^{k})^2 - \lambda^{k} v^{k}$ & $0.550$ 	& $0.001$ \\ \hline
\end{tabular}%
}
\renewcommand{\arraystretch}{1} % reset to normal
\caption{Radii and choices of $\beta$ for each of the methods used.}
\label{tab:radii_and_beta}
\end{table}

%
%
%
%
%
%------------------------------------------------------------------------------------
% 			FIGURE  4
%------------------------------------------------------------------------------------         
\begin{figure}[h!]
\centering
\resizebox{0.2\linewidth}{!}{\includegraphics{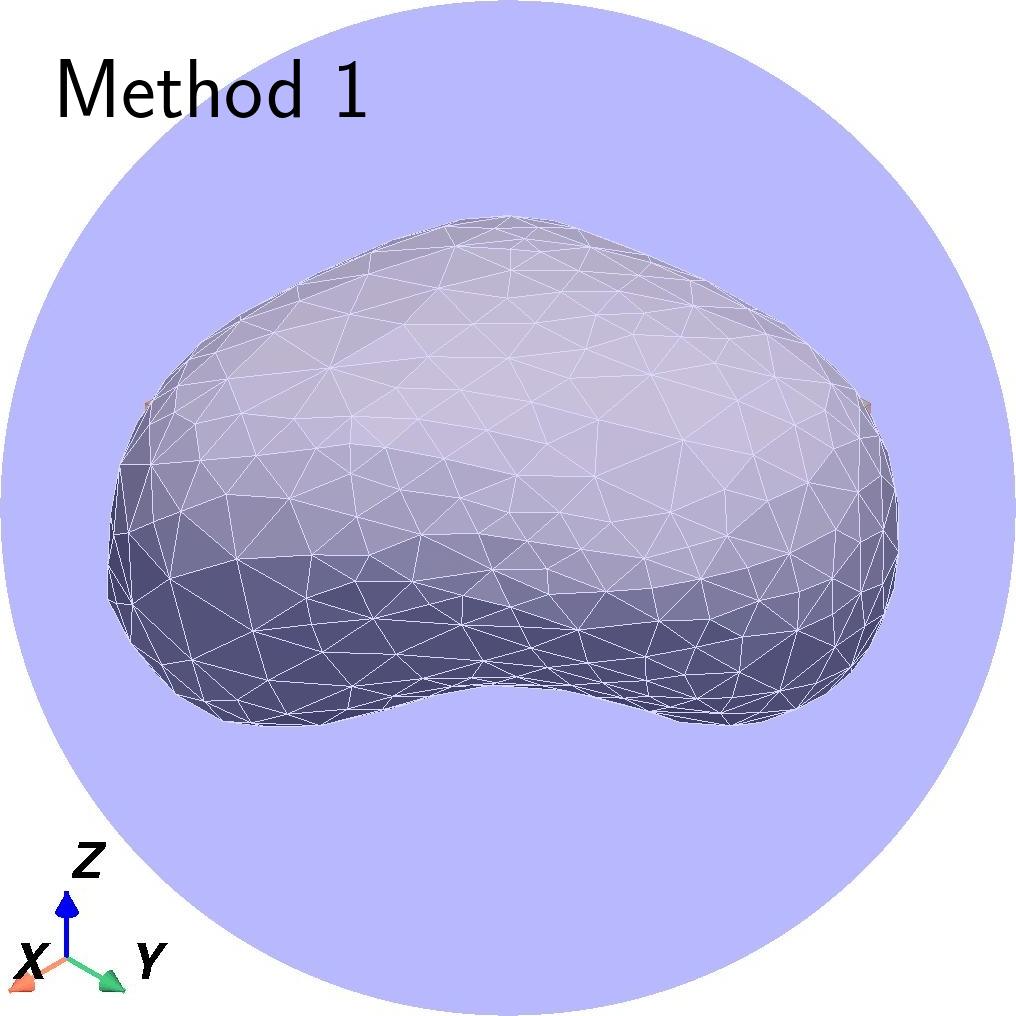}}\ 
\resizebox{0.2\linewidth}{!}{\includegraphics{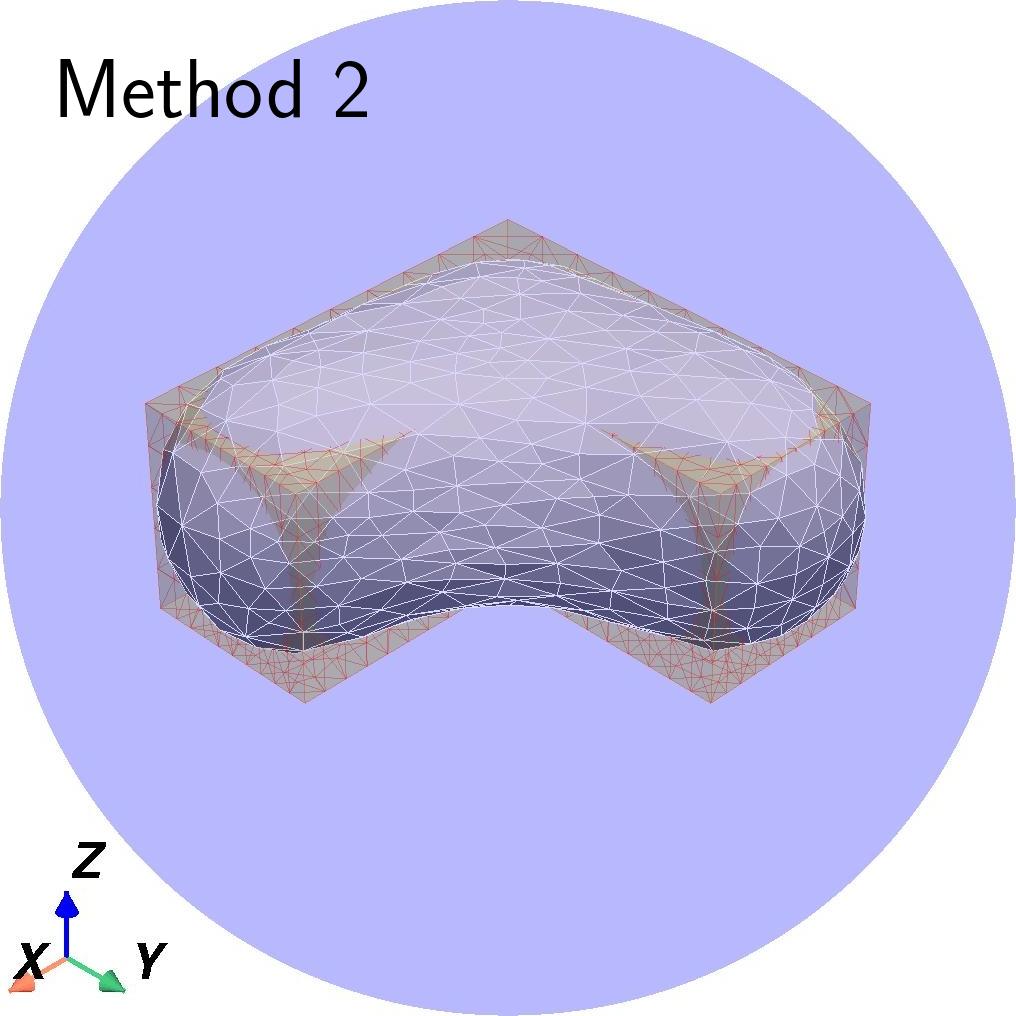}}\ 
\resizebox{0.2\linewidth}{!}{\includegraphics{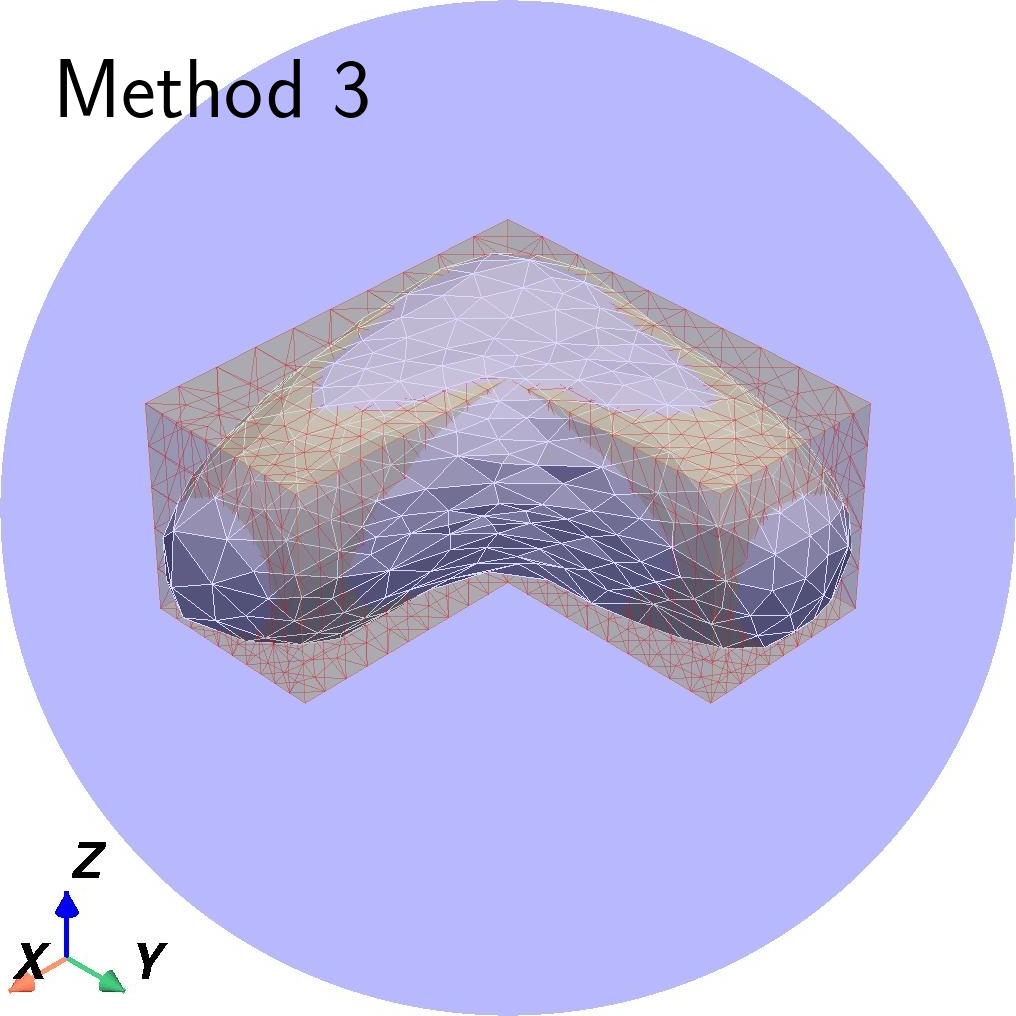}}\ 
\resizebox{0.2\linewidth}{!}{\includegraphics{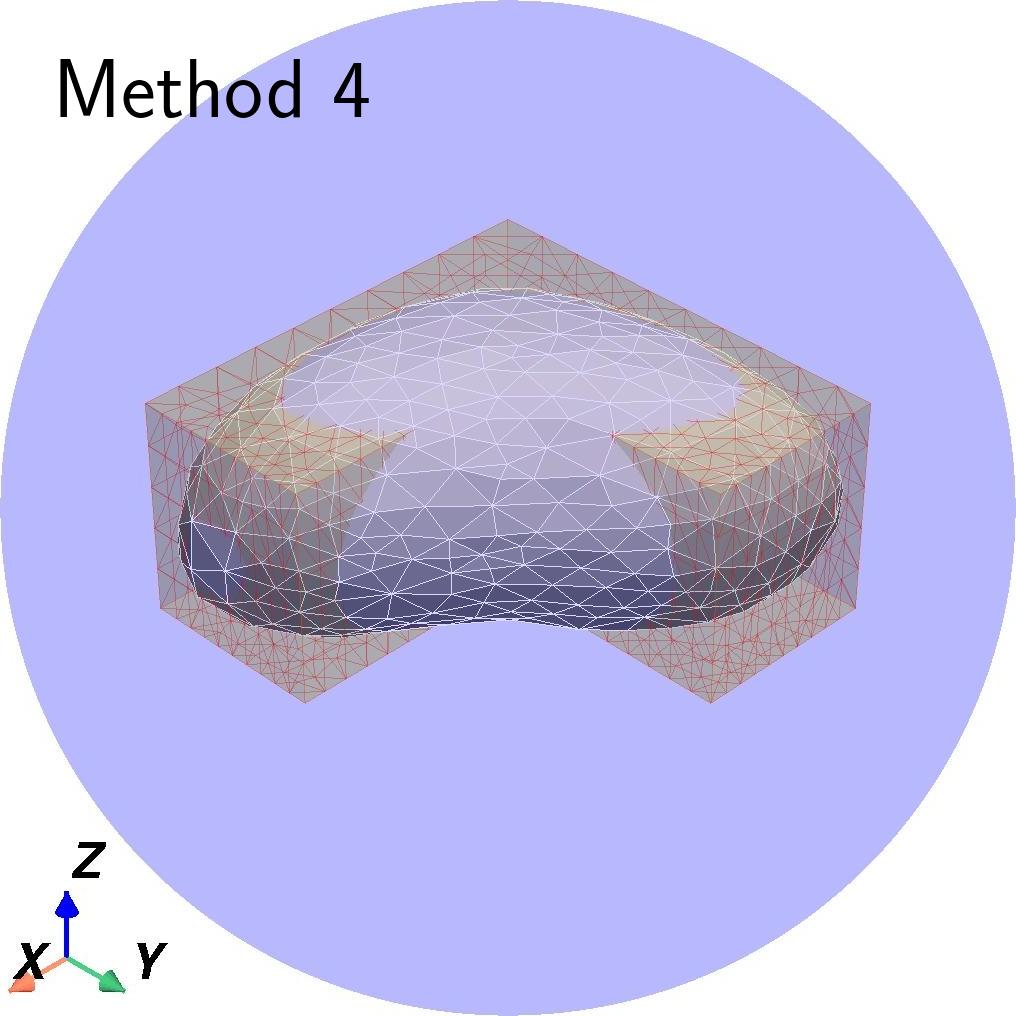}}\\[0.5em]
\resizebox{0.2\linewidth}{!}{\includegraphics{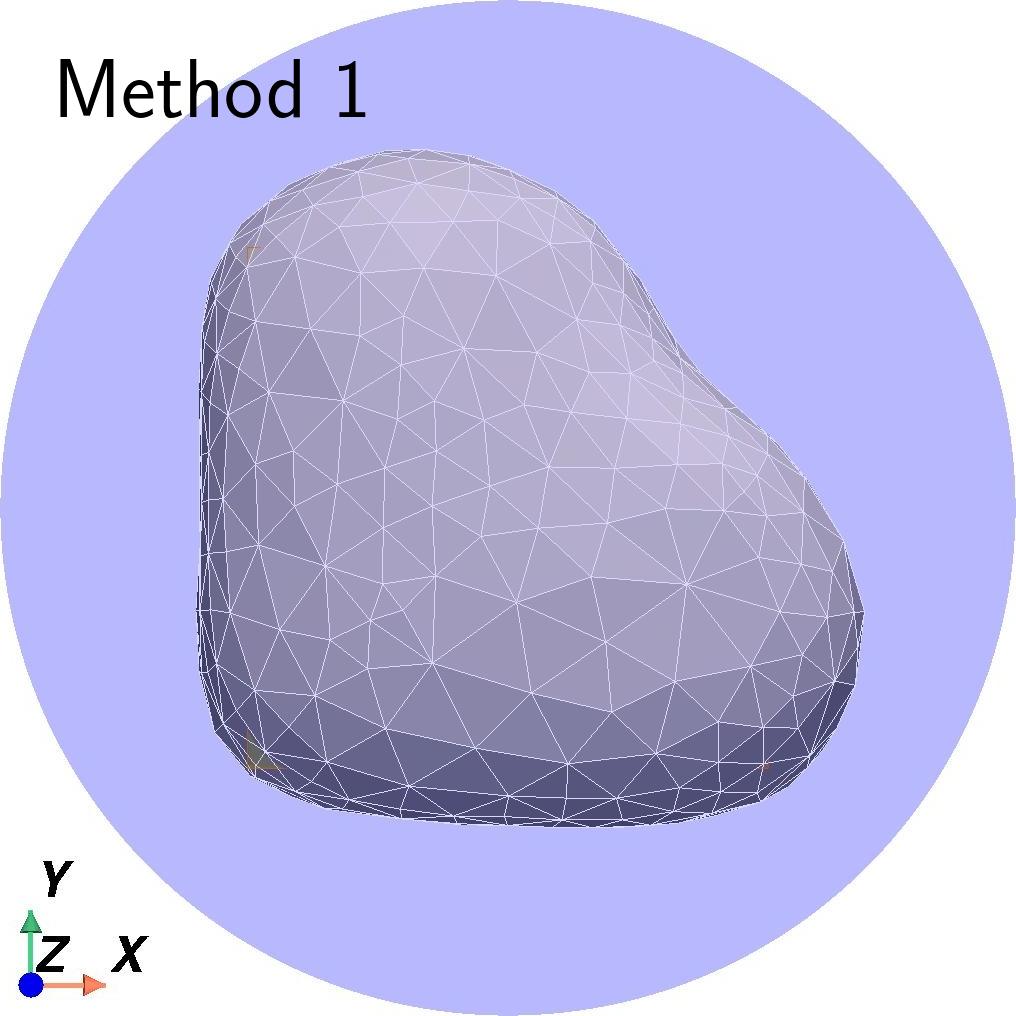}}\ 
\resizebox{0.2\linewidth}{!}{\includegraphics{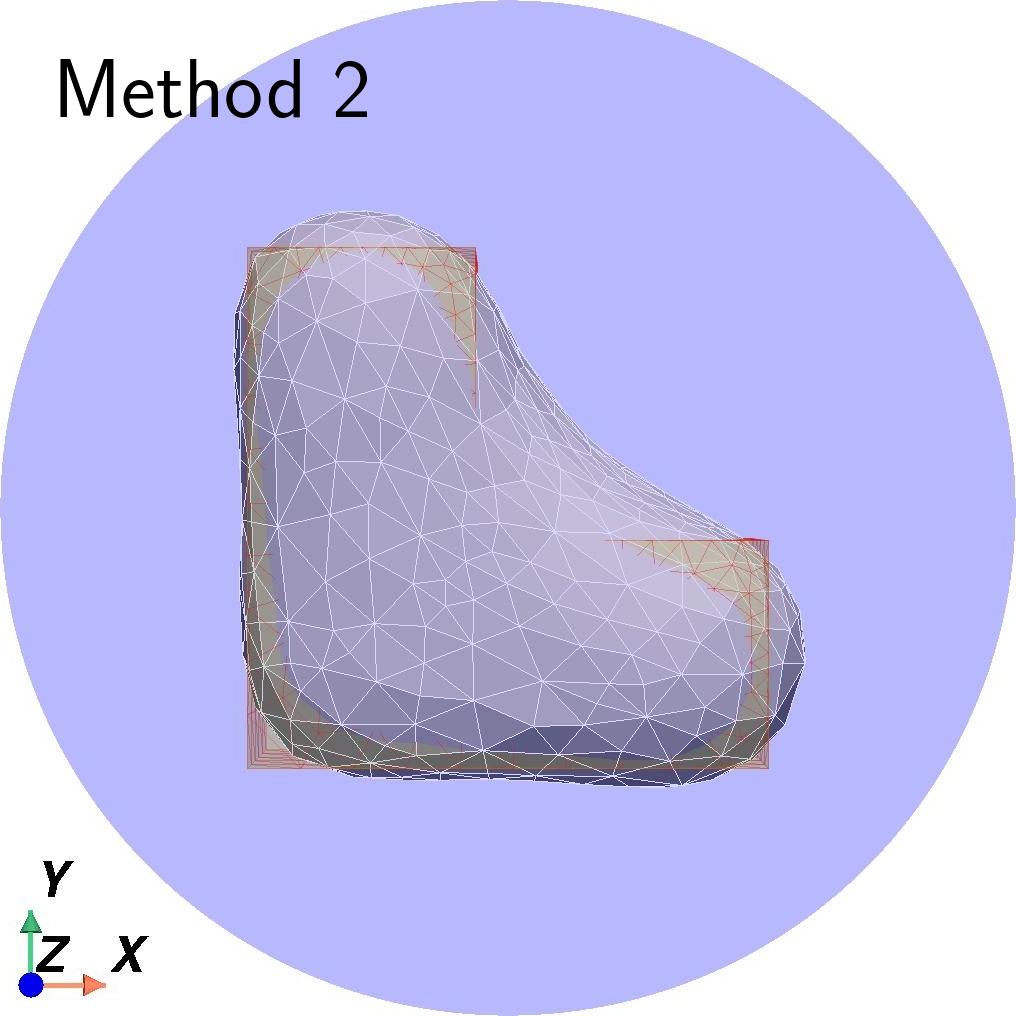}}\ 
\resizebox{0.2\linewidth}{!}{\includegraphics{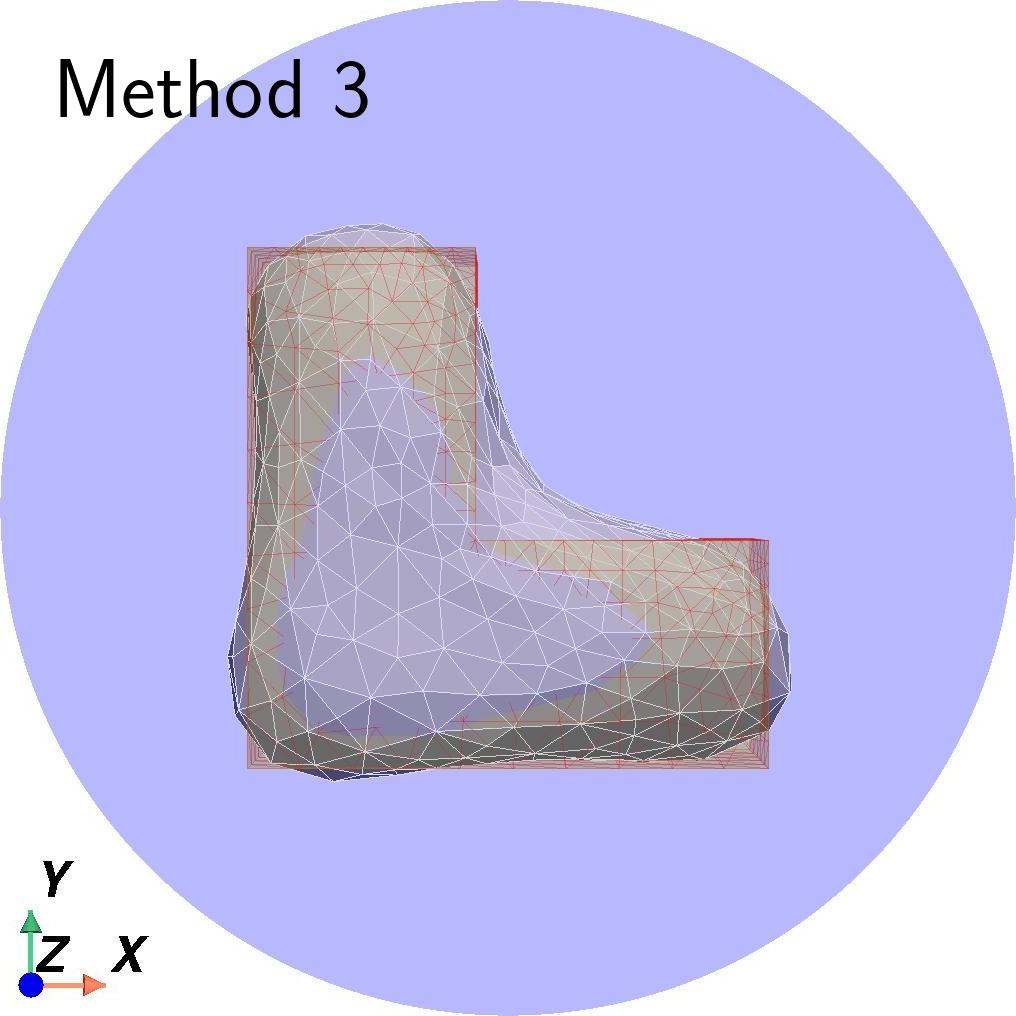}}\ 
\resizebox{0.2\linewidth}{!}{\includegraphics{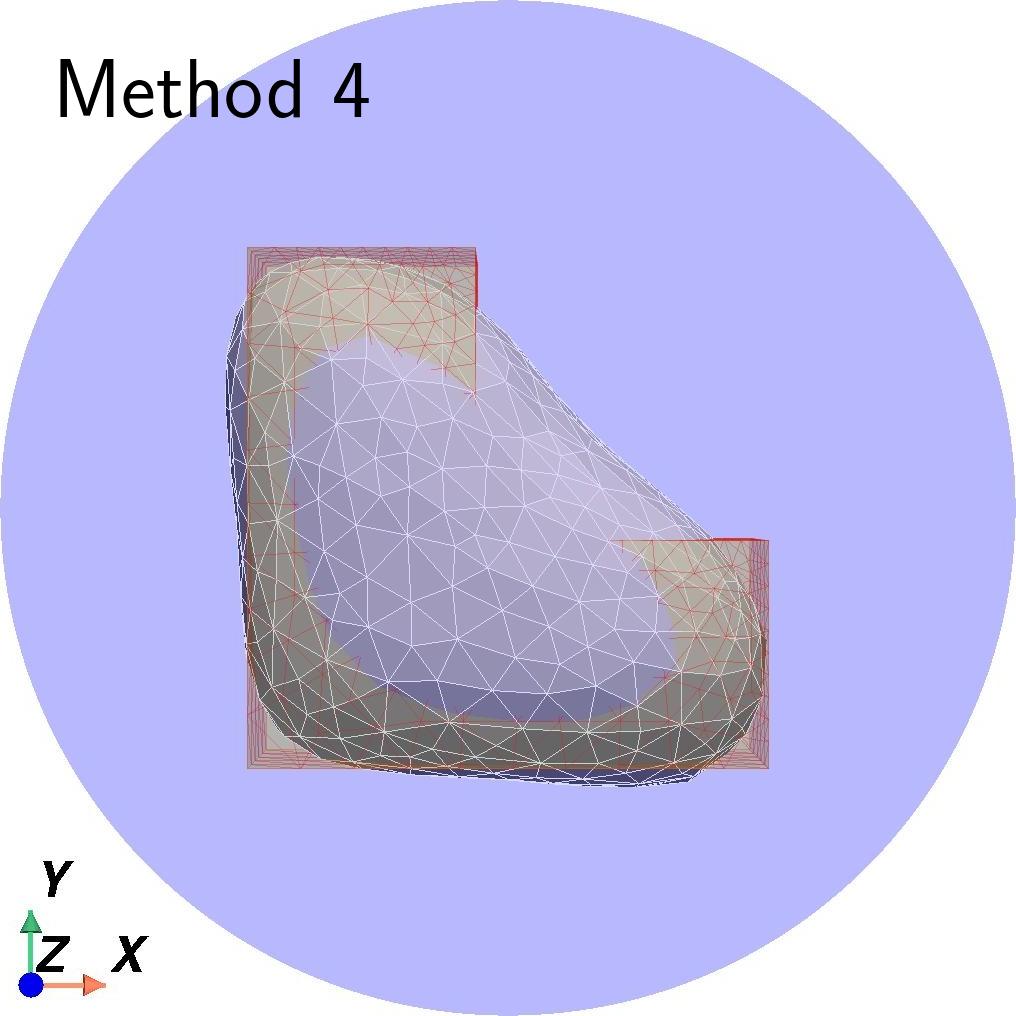}}
\caption{Impact of adjoint and gradient selection on reconstructed shape}
\label{fig:preliminary_tests}
\end{figure}
%
%
%
%
%
%
%
%
%
%------------------------------------------------------------------------------------
% 			FIGURE  5
%------------------------------------------------------------------------------------         
\begin{figure}
\centering
\resizebox{0.3\linewidth}{!}{\includegraphics{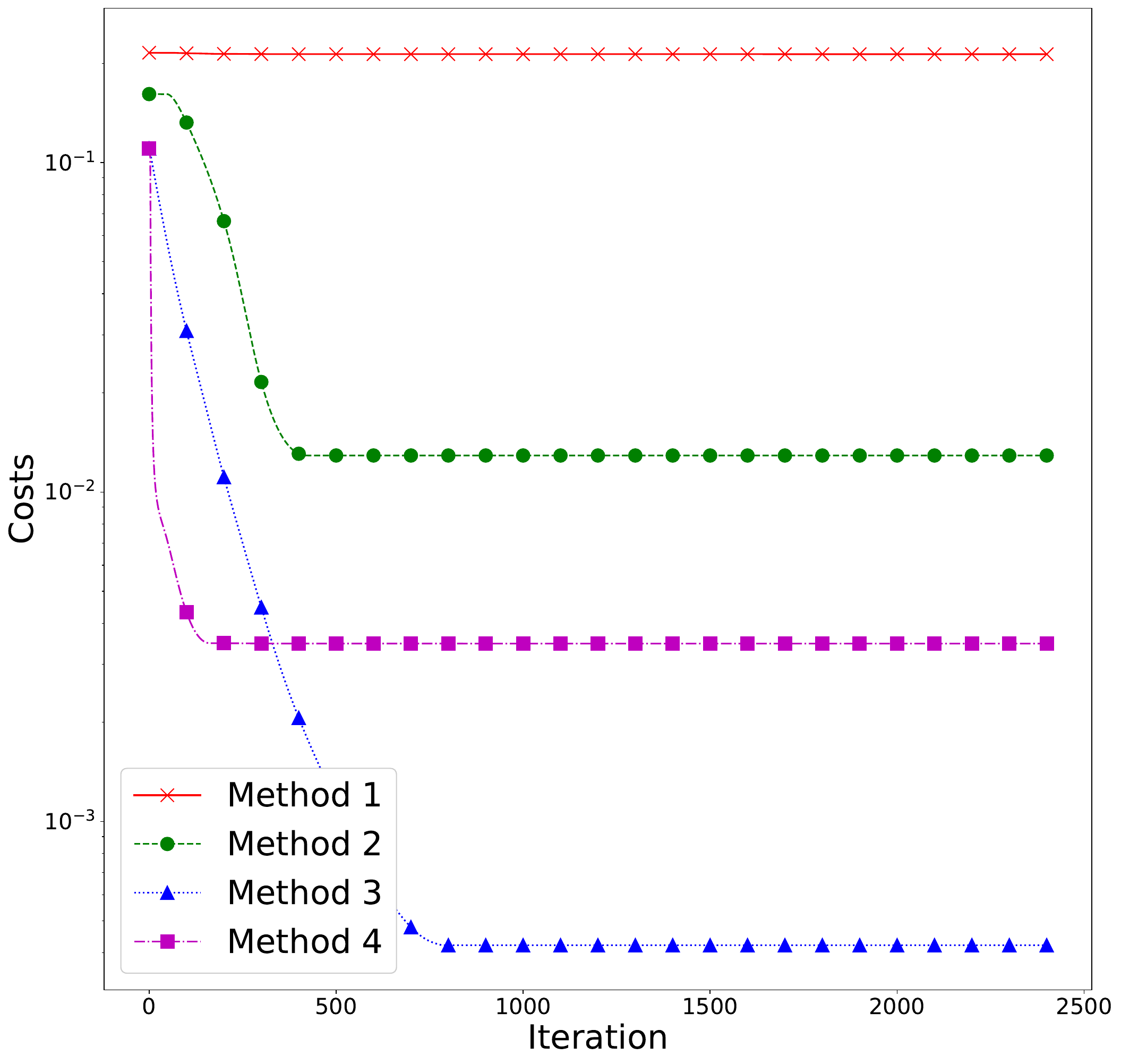}}\quad
\resizebox{0.3\linewidth}{!}{\includegraphics{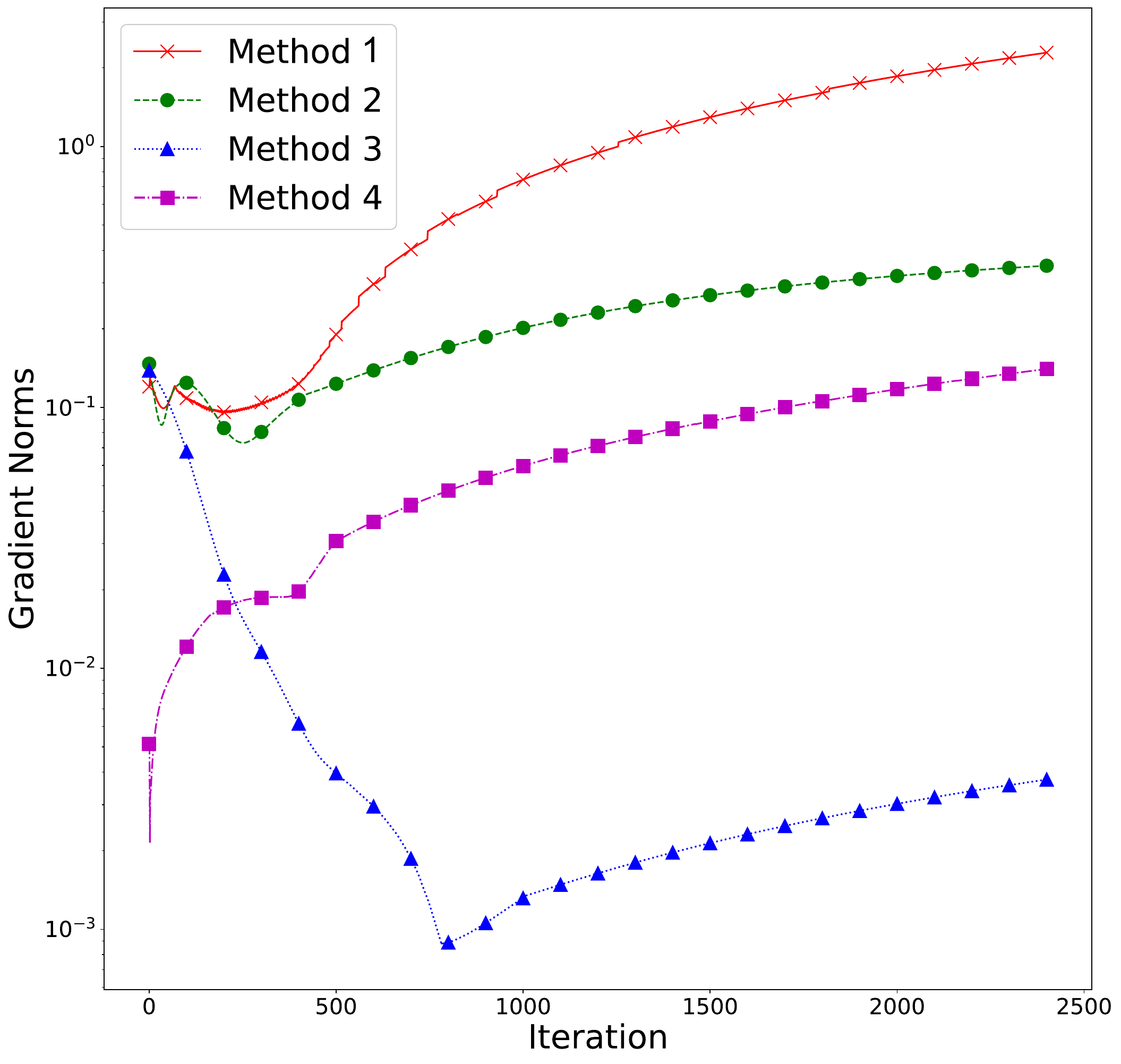}}
\caption{Histories of cost and gradient norms for shapes in Fig.~\ref{fig:preliminary_tests}}
\label{fig:preliminary_tests_costs_and_norms}
\end{figure}

The computational results for the methods listed in Table~\ref{tab:radii_and_beta} are shown in Figures~\ref{fig:preliminary_tests} and \ref{fig:preliminary_tests_costs_and_norms}. 
Method~3, which employs the kernel $G^{k}_{p}$, delivers the most accurate obstacle reconstruction and the lowest cost functional value. 
While Methods~2 and 3 both capture concave regions effectively, Method~3 outperforms all others overall. 
In contrast, Method~1 is less sensitive to geometric features, leading to less precise reconstructions.

Method~3 also combines superior accuracy with favorable computational efficiency. 
For $2400$ optimization iterations on a mesh with $1377$ vertices, $4524$ triangular elements, and $2358$ boundary elements (degrees of freedom equal to the number of vertices), the CPU times for Methods~1--4 are $4267$s, $4446$s, $3041$s, and $4187$s, respectively. 
Thus, Method~3 is faster than Methods~1 and~4, while Method~2 is the most time-consuming.  

Based on these results, we select Method~3 for further numerical experiments to benchmark the CCBM--ADMM scheme against the conventional approach with the exact kernel $G$ in the next subsection.

%--------------------------  NUMERICAL EXAMPLES IN 3D --------------------------
%--------------------------  NUMERICAL EXAMPLES IN 3D --------------------------
%--------------------------  NUMERICAL EXAMPLES IN 3D --------------------------
\subsection{Numerical examples in 3D}
\label{subsec:Numerical_Examples_3D} 
We now consider three-dimensional examples with slightly modified space-dependent coefficients to illustrate the advantages of the ADMM framework and the influence of boundary data on the reconstruction.

The specimen is again the unit ball $D = B_1(0) \subset \mathbb{R}^3$; however, we now prescribe  
$\sigma(x) = 1.1 + \sin(\pi x)\sin(\pi y)$ and define  
\[
\vect{b}(x) = \left(1.0 + 0.5 \sin\left(\arctan\left(\frac{x_2}{x_1}\right)\right),\; 1.0 + 0.5 \cos\left(\arctan\left(\frac{x_2}{x_1}\right)\right),\; 1.5\right)^{\top},
\]
where $x = (x_1, x_2, x_3) \in D \subset \mathbb{R}^3$.  
The boundary data are synthetically generated as $g(x) = \exp(x_1^2 + x_2^2)$ for $x \in \partial D$.

We compare the proposed CCBM--ADMM scheme with the conventional CCBM-based shape optimization using the three exact obstacle geometries shown in Figure~\ref{fig:exact3d}. 
The computational parameters used in the inversion procedure are identical to those in the previous section, except that $N=1200$ and the initial domain is $\omega^0 = B_r(0)$ with $r \in \{0.5, 0.575\}$, selected via preliminary tests to achieve the best reconstructions.
Figures~\ref{fig:exact_measurement} and~\ref{fig:noisy_measurement} compare reconstructions obtained with the conventional CCBM and CCBM--ADMM. 
The conventional method performs poorly, even with exact data and more so under moderate noise ($\delta = 5\%$), whereas CCBM–ADMM yields stable and satisfactory reconstructions, particularly for non-convex shapes and deep concavities.
Moreover, the proposed scheme remains effective to a Lipschitz-smooth obstacle, despite a mild violation of the underlying smoothness assumptions.

Figure~\ref{fig:cost_and_gradient} shows the evolution of the cost functional and gradient norm for the L-block case. 
The cost converges under moderate noise and reaches lower final values for smaller noise levels. 
The gradient norm initially decreases and then oscillates due to the last two terms in the shape gradient of $Y^k$ (see~\eqref{eq:shape_gradient_of_Y}), but converges after additional iterations.

Overall, these results confirm the robustness and effectiveness of the ADMM-based approach for shape optimization, in line with previous studies~\cite{CherratAfraitesRabago2025b,RabagoHadriAfraitesHendyZaky2024}.

%
%
%
%
%------------------------------------------------------------------------------------
% 			FIGURE  3
%------------------------------------------------------------------------------------         
\begin{figure}[h!]
\centering
\resizebox{0.275\linewidth}{!}{\includegraphics{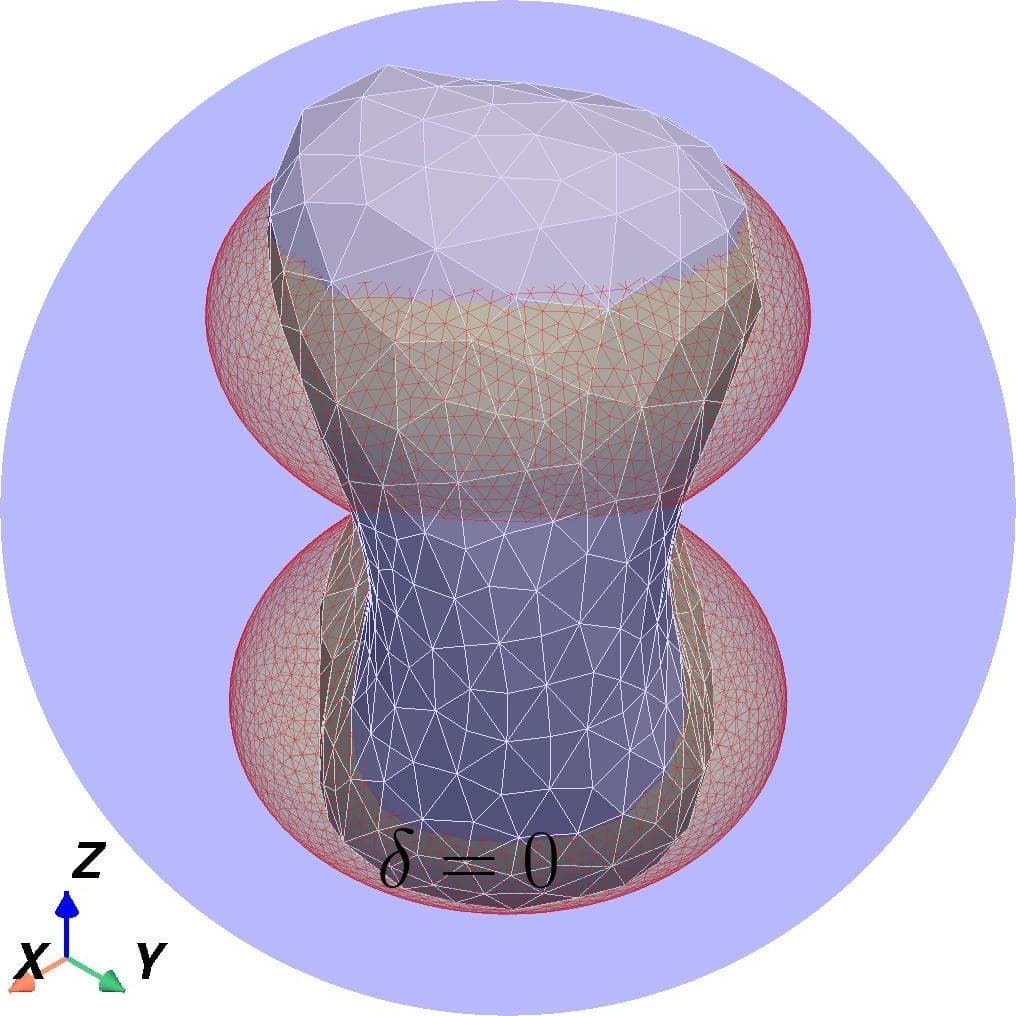}}\quad
\resizebox{0.275\linewidth}{!}{\includegraphics{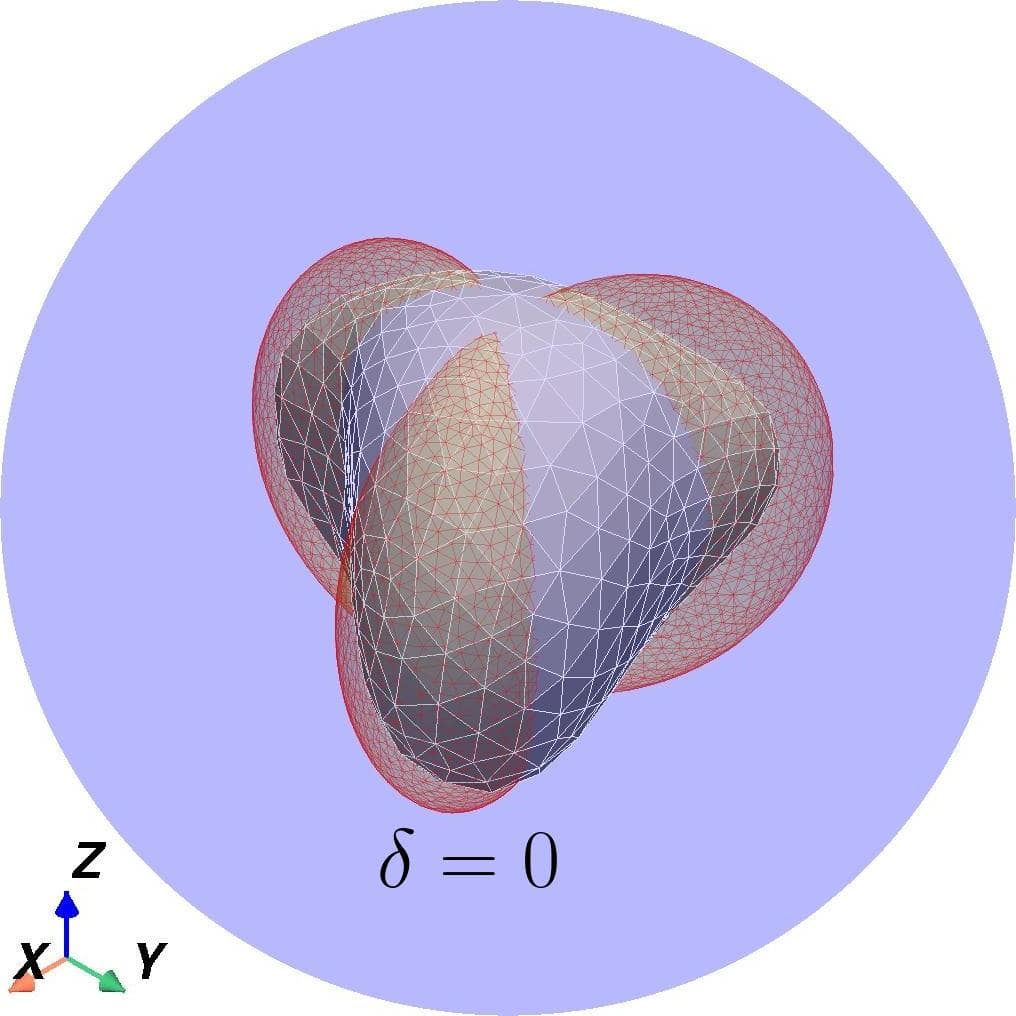}}\quad
\resizebox{0.275\linewidth}{!}{\includegraphics{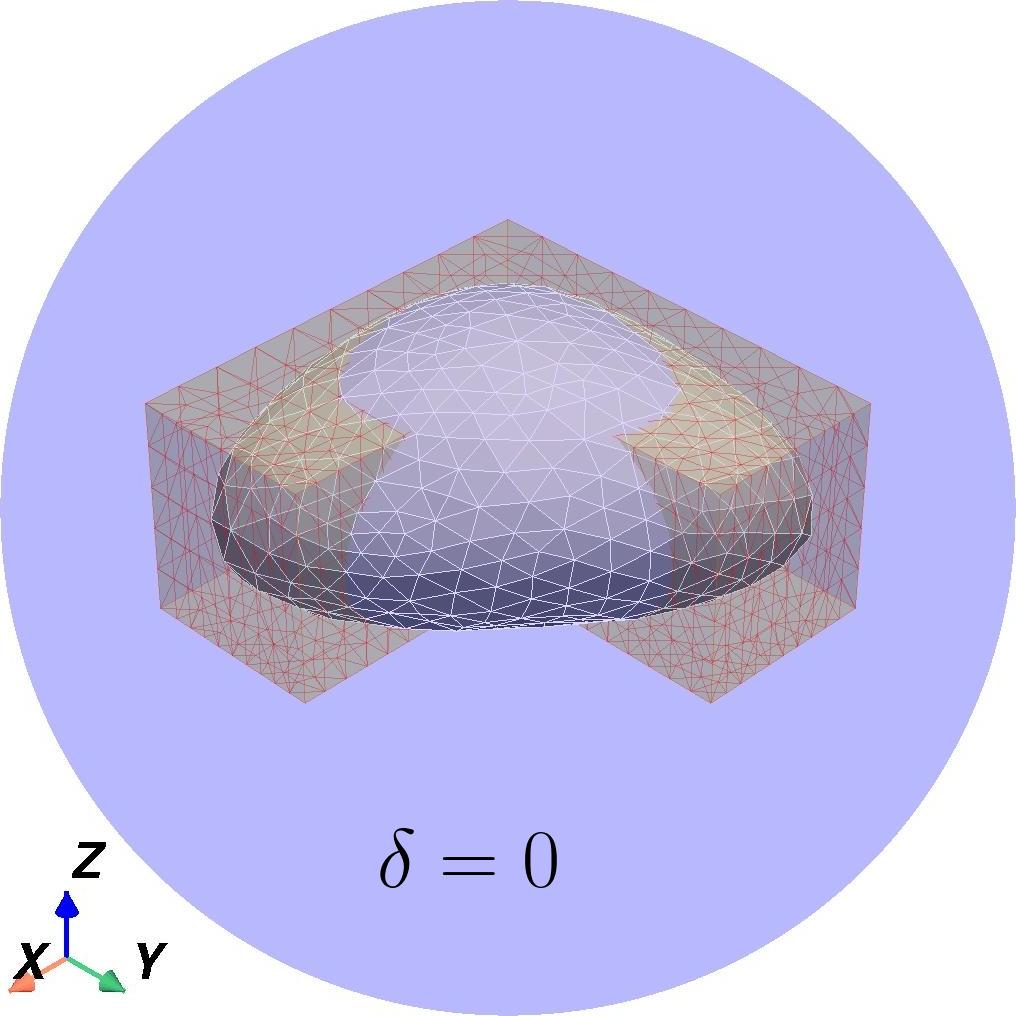}}\\[0.5em]  
\resizebox{0.275\linewidth}{!}{\includegraphics{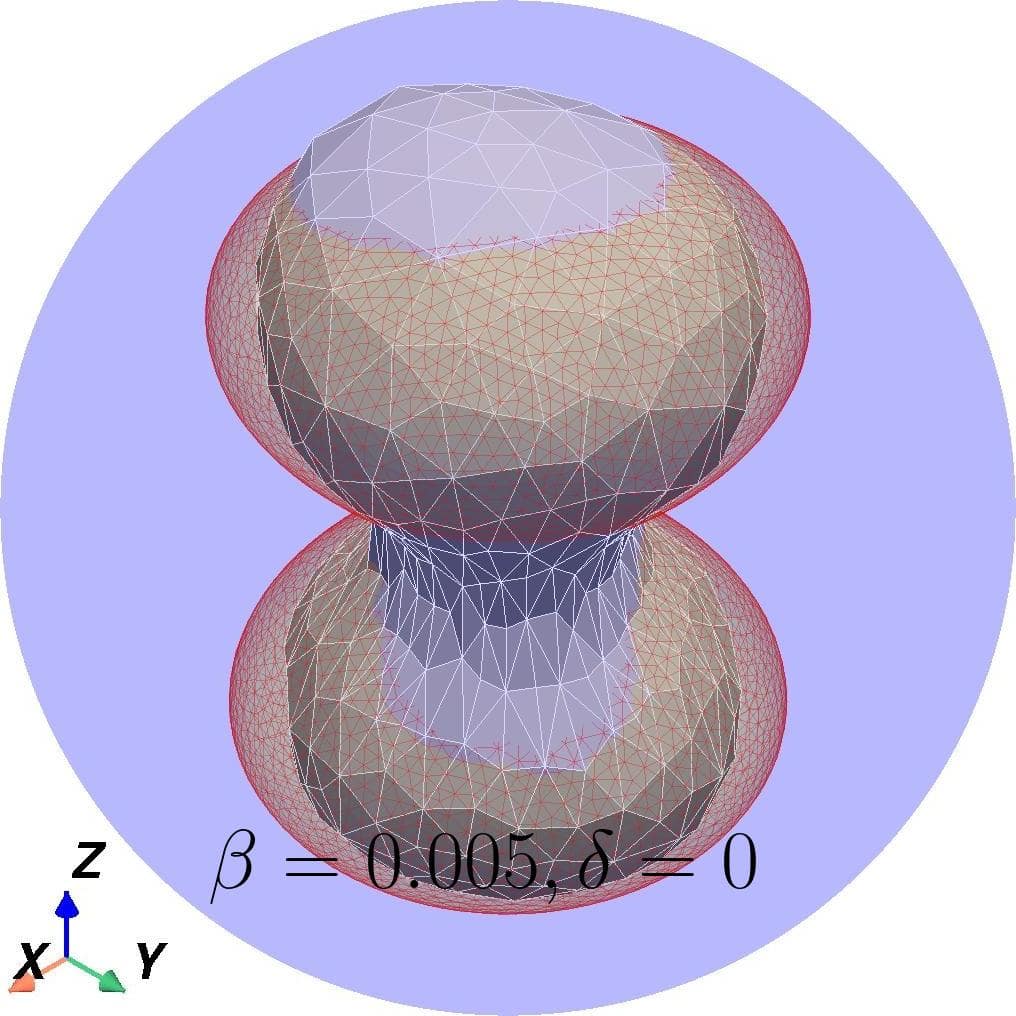}}\quad
\resizebox{0.275\linewidth}{!}{\includegraphics{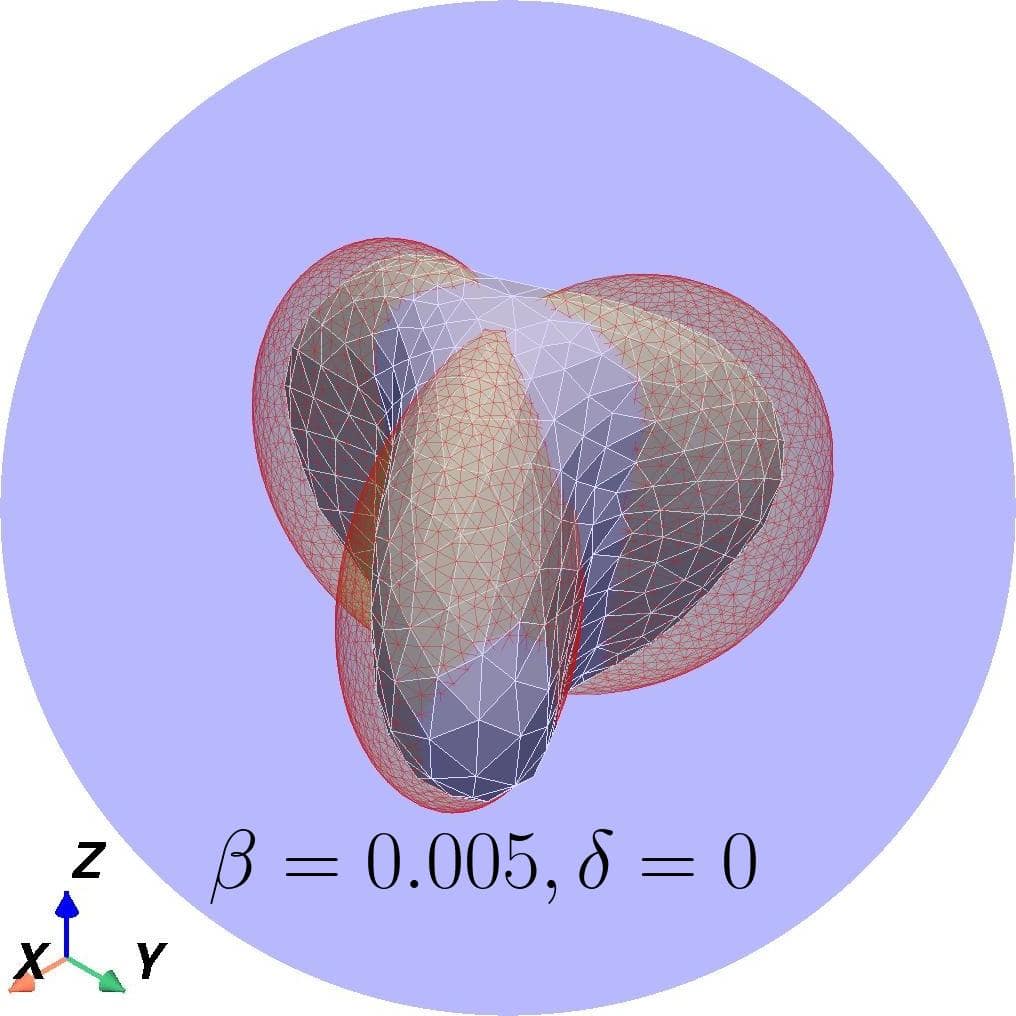}}\quad
\resizebox{0.275\linewidth}{!}{\includegraphics{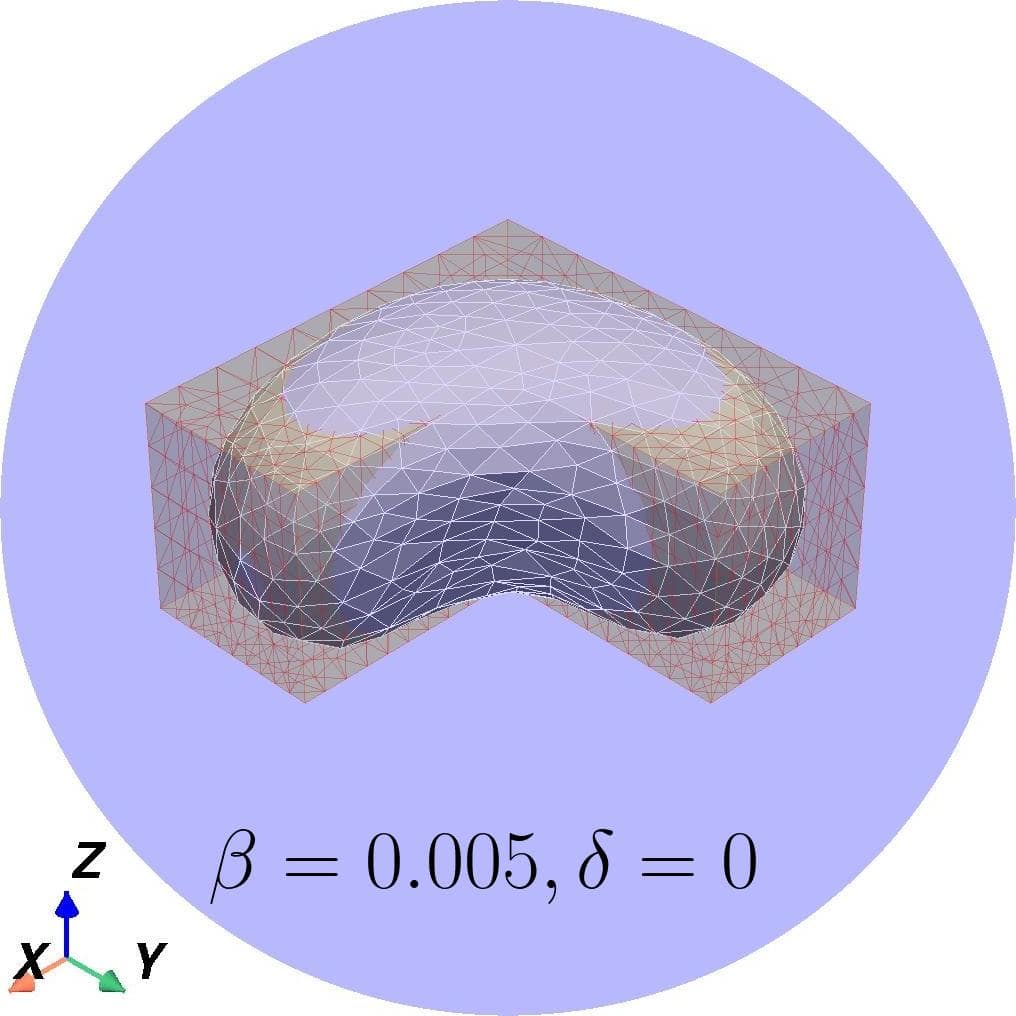}}
\caption{Reconstructed shapes using CCBM (top) and Algorithm~\ref{algo:CCBM--ADMM--SGBD} (bottom) under exact measurement $\delta = 0\%$}
\label{fig:exact_measurement}
\end{figure}
%
%
%
%
%------------------------------------------------------------------------------------
% 			FIGURE  4
%------------------------------------------------------------------------------------         
\begin{figure}[h!]
\centering
\resizebox{0.275\linewidth}{!}{\includegraphics{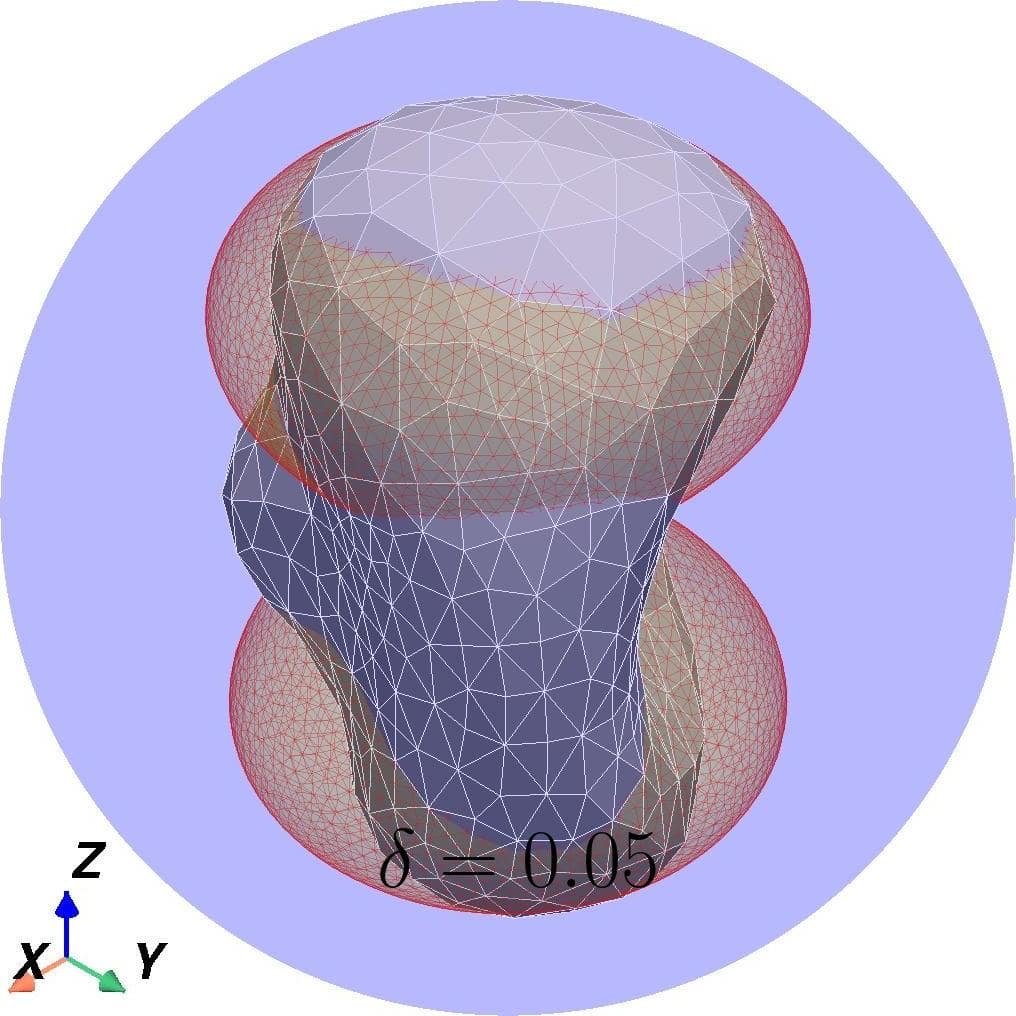}}\quad
\resizebox{0.275\linewidth}{!}{\includegraphics{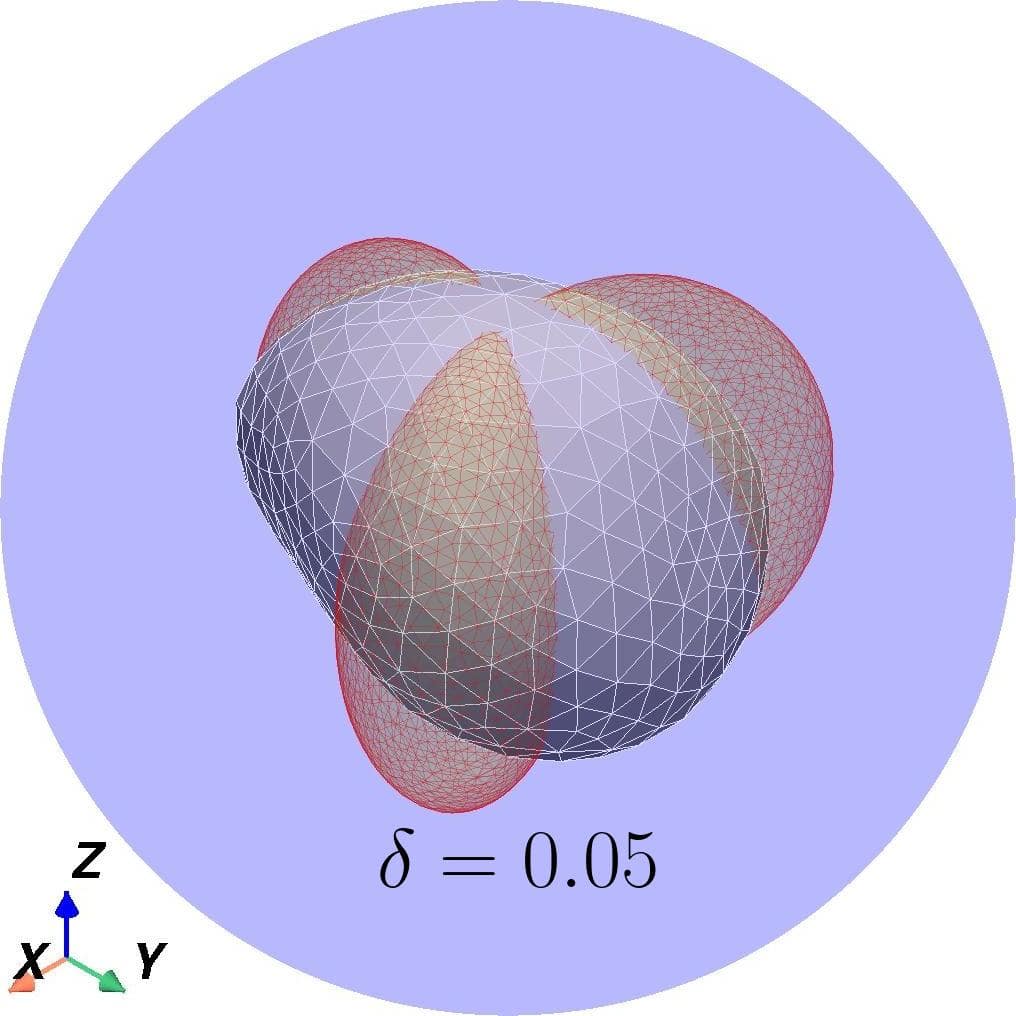}}\quad
\resizebox{0.275\linewidth}{!}{\includegraphics{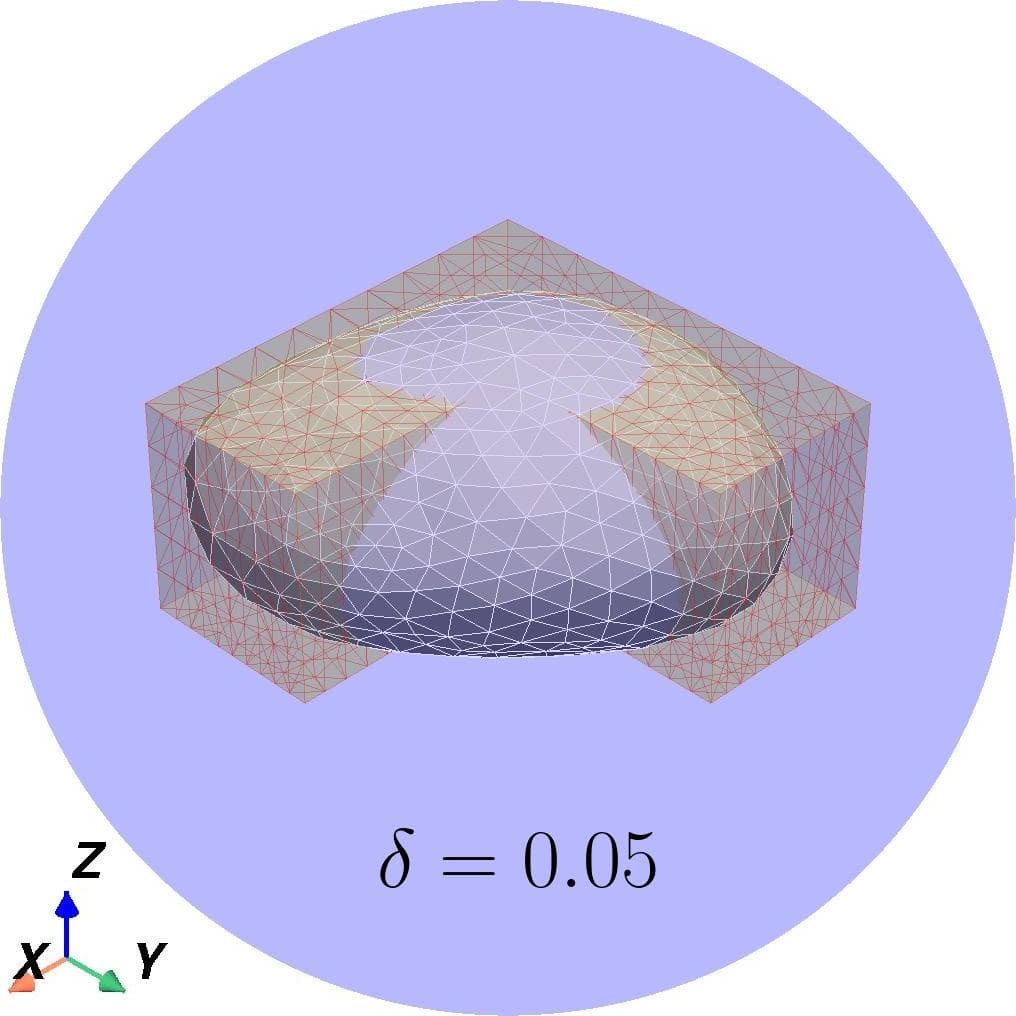}}\\[0.5em]  
\resizebox{0.275\linewidth}{!}{\includegraphics{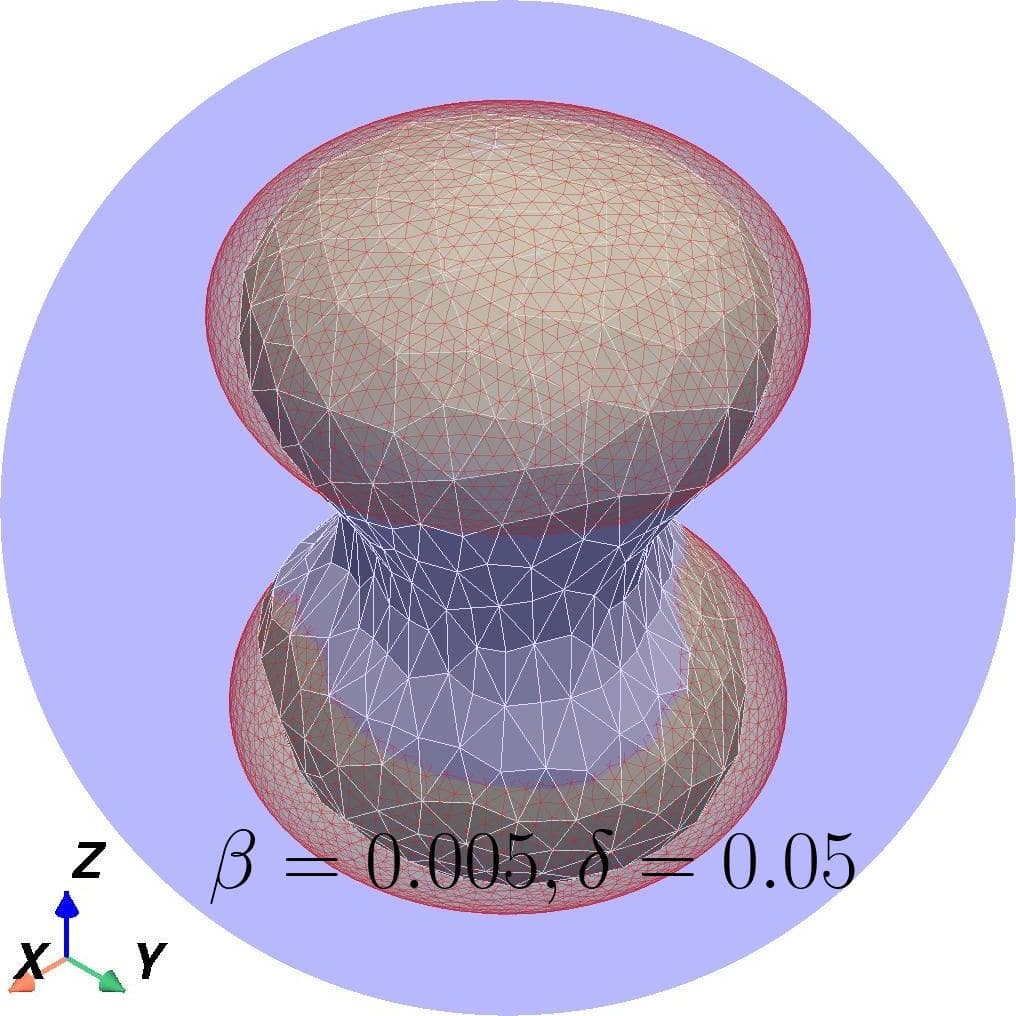}}\quad
\resizebox{0.275\linewidth}{!}{\includegraphics{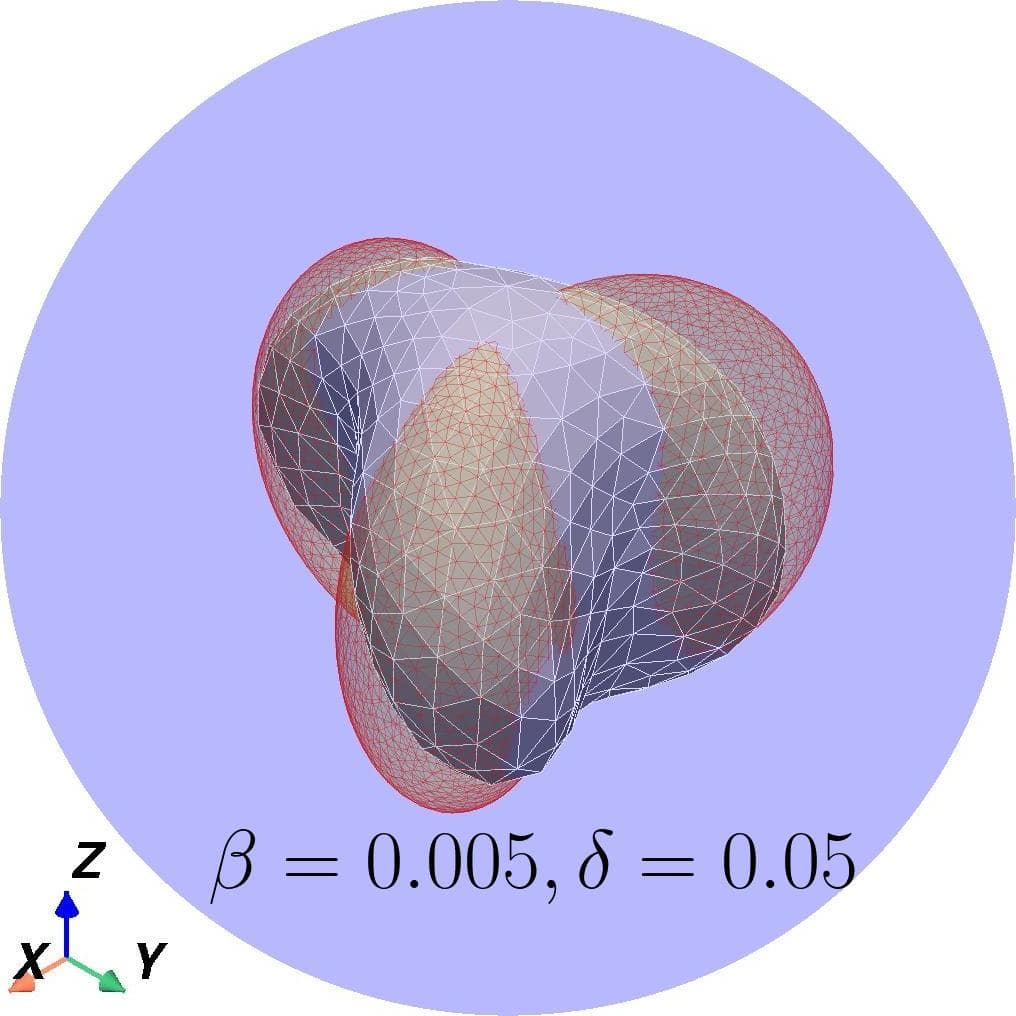}}\quad
\resizebox{0.275\linewidth}{!}{\includegraphics{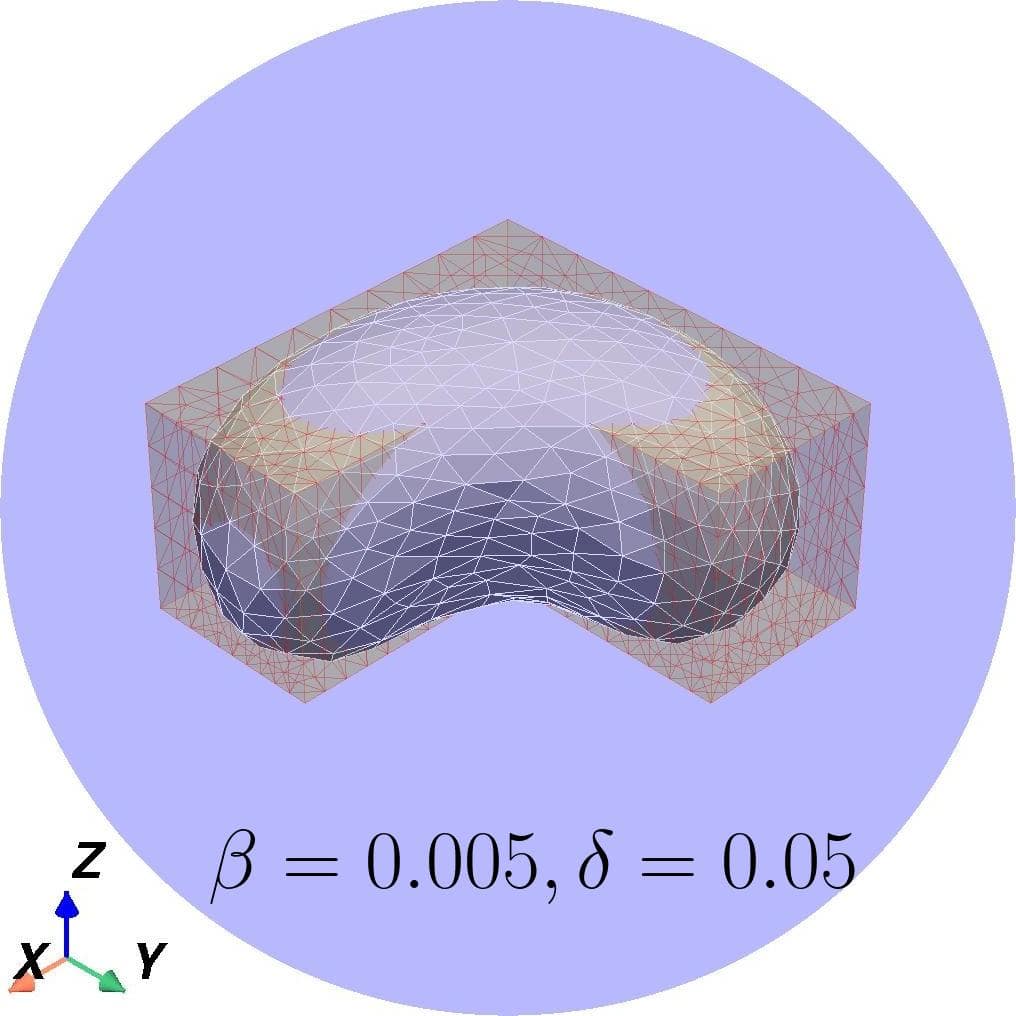}}
\caption{Reconstructed shapes using CCBM (top) and Algorithm~\ref{algo:CCBM--ADMM--SGBD} (bottom) under noisy measurement $\delta = 5\%$}
\label{fig:noisy_measurement}
\end{figure}
%------------------------------------------------------------------------------------
% 			FIGURE  5
%------------------------------------------------------------------------------------         
\begin{figure}[h!]
\centering 
\resizebox{0.24\linewidth}{!}{\includegraphics{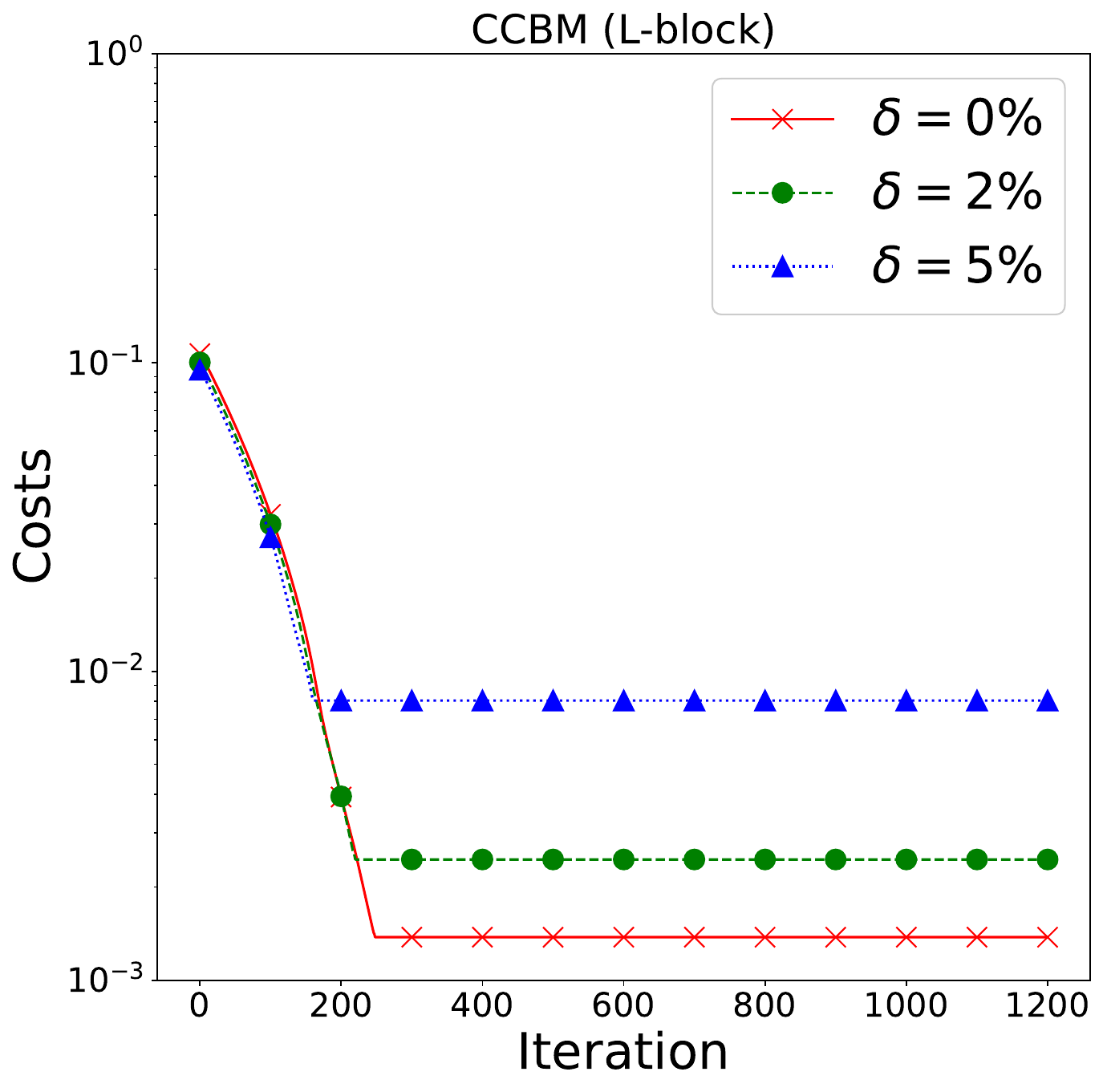}}\
\resizebox{0.24\linewidth}{!}{\includegraphics{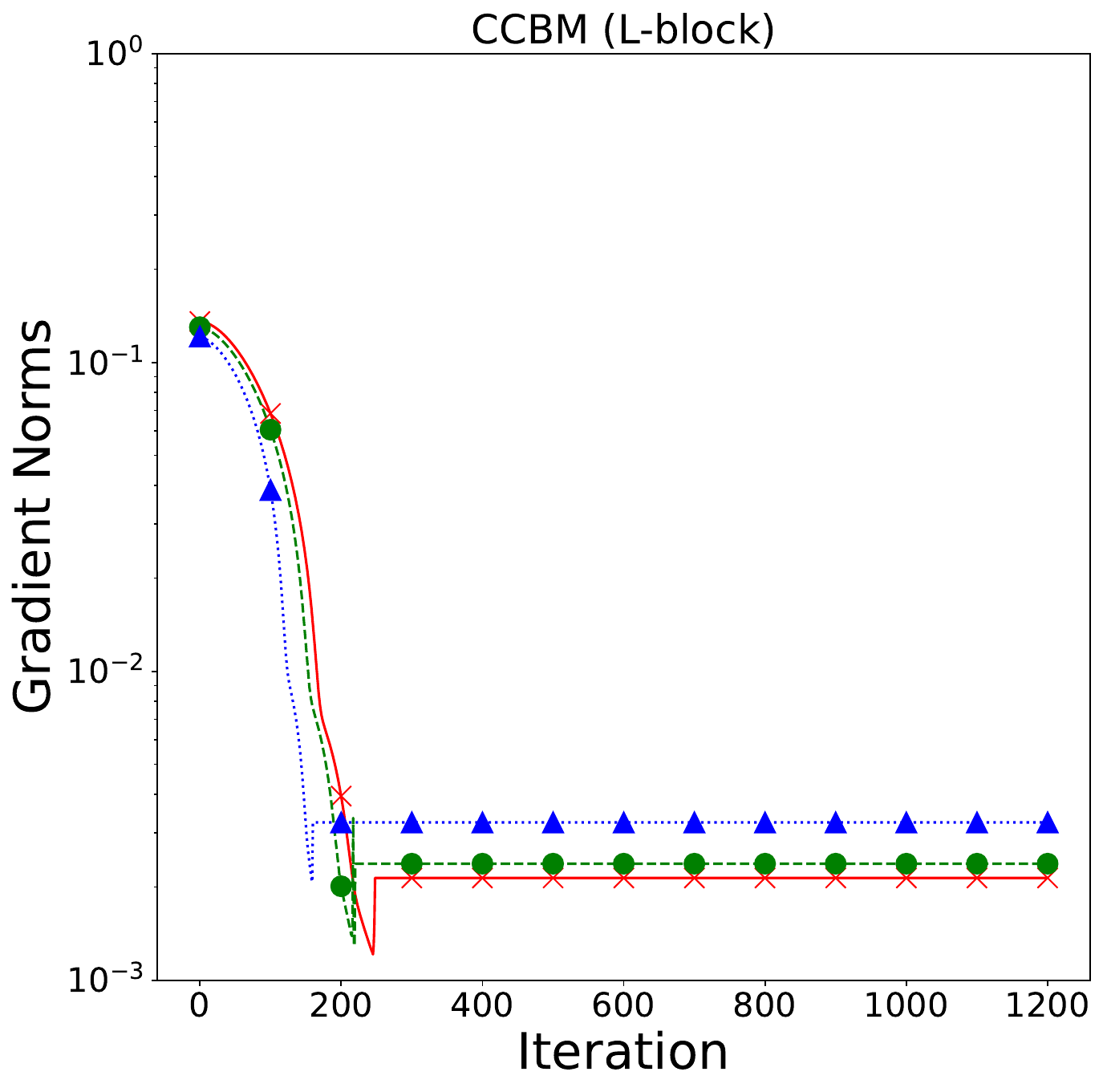}}\quad
\resizebox{0.24\linewidth}{!}{\includegraphics{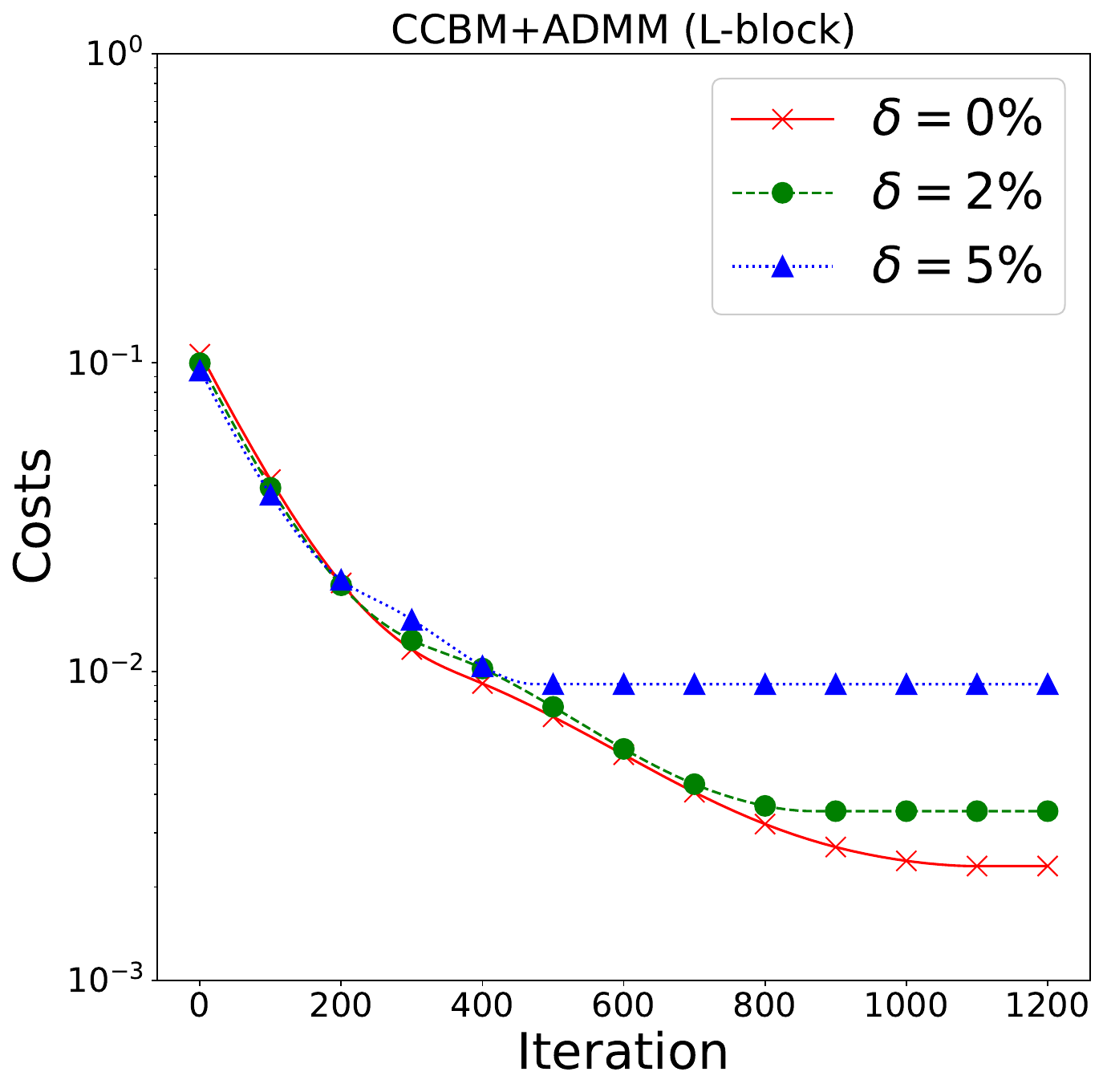}}\
\resizebox{0.24\linewidth}{!}{\includegraphics{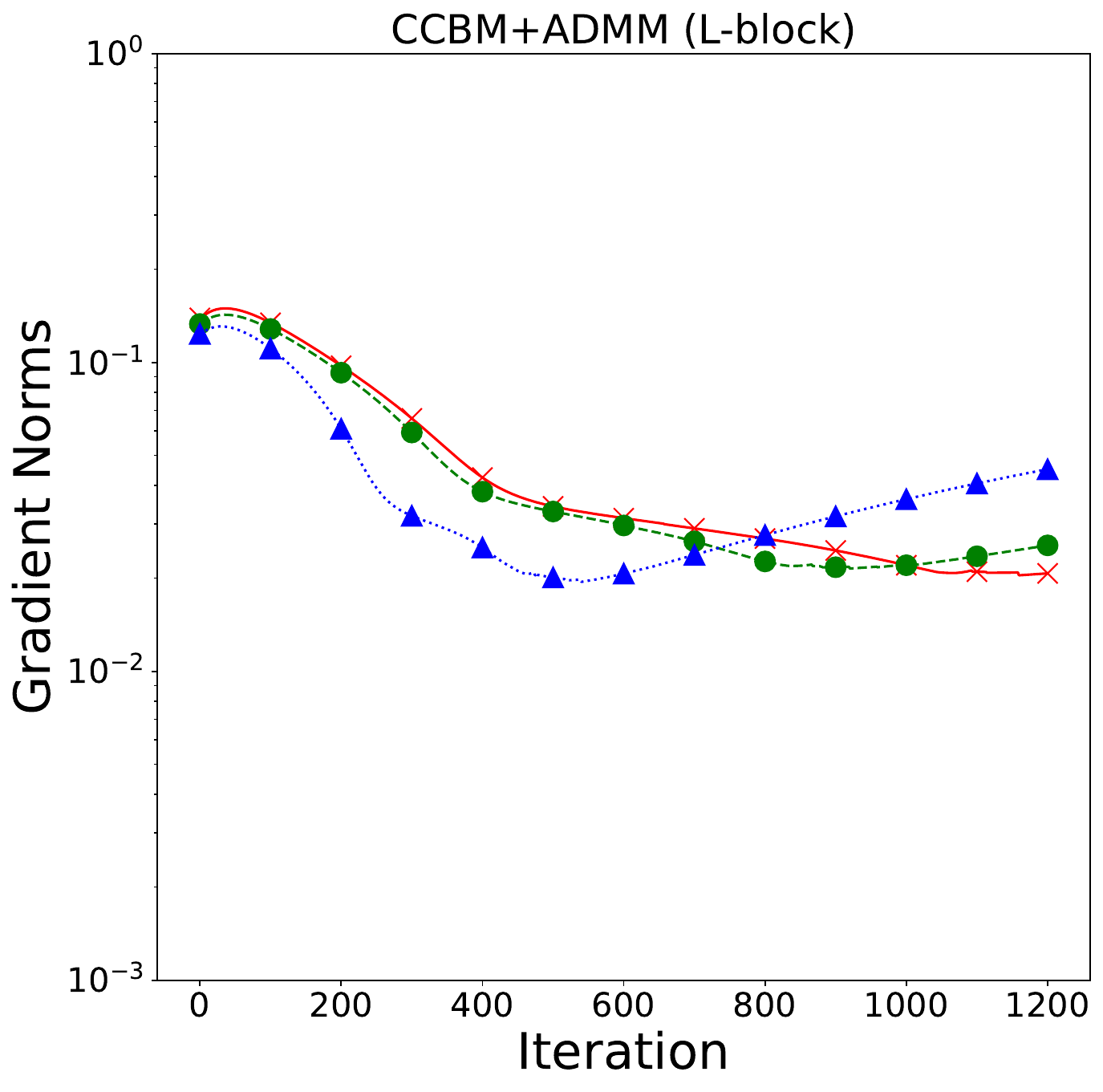}}

\caption{Histories of costs and gradient norms for the L-block case; left, conventional CCBM; right, proposed CCBM--ADMM scheme}
\label{fig:cost_and_gradient}
\end{figure}

%%-------------------------------------- CONCLUSION -------------------------------------- 
%%-------------------------------------- CONCLUSION --------------------------------------  
\section{Conclusion}\label{sec:conclusion}
We have proposed a shape optimization method for inverse advection--diffusion problems that embeds the coupled complex boundary method (CCBM) within an ADMM-inspired augmented Lagrangian framework. 
The use of a Sobolev-gradient descent introduces a natural smoothing effect, eliminating the need for explicit perimeter or surface-area regularization commonly employed to stabilize reconstructions from noisy data.

Two- and three-dimensional numerical experiments demonstrate accurate recovery of complex inclusion geometries, with clear improvements for concave and geometrically intricate features where conventional approaches typically fail. 
The combination of a tailored adjoint formulation and partial gradient updates further enhances reconstruction quality and yields consistently lower cost values, even when the smoothness assumptions underlying the analysis are only partially satisfied.

Overall, the results confirm the effectiveness of the proposed CCBM--ADMM framework and support recent findings~\cite{CherratAfraitesRabago2025b,RabagoHadriAfraitesHendyZaky2024} on the suitability of ADMM-based methods for noisy and geometrically challenging inverse shape problems.

%%%%%%%%%%%%%%%%%%%%%%%%%%%%%%%%%%%%%%%%%%%%%%%%%%%%%%%%%%%%%%%%%%%%%%%%%%%%%%
%%%%%%%%%%%%%%%%%%%%%%%%%%%%%%%%%%%%%%%%%%%%%%%%%%%%%%%%%%%%%%%%%%%%%%%%%%%%%%
%%%%%%%%%%%%%%%%%%%%%%%%%%%%%%%%%%%%%%%%%%%%%%%%%%%%%%%%%%%%%%%%%%%%%%%%%%%%%%
%%%%%%%%%%%%%%%%%%%%%%%%%%%%%%%%%%%%%%%%%%%%%%%%%%%%%%%%%%%%%%%%%%%%%%%%%%%%%%
%%%%%%%%%%%%%%%%%%%%%%%%%%%%%%%%%%%%%%%%%%%%%%%%%%%%%%%%%%%%%%%%%%%%%%%%%%%%%%
%%%%%%%%%%%%%%%%%%%%%%%%%%%%%%%%%%%%%%%%%%%%%%%%%%%%%%%%%%%%%%%%%%%%%%%%%%%%%%
%%%%%%%%%%%%%%%%%%%%%%%%%%%%%%%%%%%%%%%%%%%%%%%%%%%%%%%%%%%%%%%%%%%%%%%%%%%%%%
%%%%%%%%%%%%%%%%%%%%%%%%%%%%%%%%%%%%%%%%%%%%%%%%%%%%%%%%%%%%%%%%%%%%%%%%%%%%%%
%%%%%%%%%%%%%%%%%%%%%%%%%%%%%%%%%%%%%%%%%%%%%%%%%%%%%%%%%%%%%%%%%%%%%%%%%%%%%%
%%%%%%%%%%%%%%%%%%%%%%%%%%%%%%%%%%%%%%%%%%%%%%%%%%%%%%%%%%%%%%%%%%%%%%%%%%%%%%
%%%%%%%%%%%%%%%%%%%%%%%%%%%%%%%%%%%%%%%%%%%%%%%%%%%%%%%%%%%%%%%%%%%%%%%%%%%%%%
%%%%%%%%%%%%%%%%%%%%%%%%%%%%%%%%%%%%%%%%%%%%%%%%%%%%%%%%%%%%%%%%%%%%%%%%%%%%%%
%%%%%%%%%%%%%%%%%%%%%%%%%%%%%%%%%%%%%%%%%%%%%%%%%%%%%%%%%%%%%%%%%%%%%%%%%%%%%%
%%%%%%%%%%%%%%%%%%%%%%%%%%%%%%%%%%%%%%%%%%%%%%%%%%%%%%%%%%%%%%%%%%%%%%%%%%%%%%
%%%%%%%%%%%%%%%%%%%%%%%%%%%%%%%%%%%%%%%%%%%%%%%%%%%%%%%%%%%%%%%%%%%%%%%%%%%%%%
%%%%%%%%%%%%%%%%%%%%%%%%%%%%%%%%%%%%%%%%%%%%%%%%%%%%%%%%%%%%%%%%%%%%%%%%%%%%%%
%%%%%%%%%%%%%%%%%%%%%%%%%%%%%%%%%%%%%%%%%%%%%%%%%%%%%%%%%%%%%%%%%%%%%%%%%%%%%%
%%%%%%%%%%%%%%%%%%%%%%%%%%%%%%%%%%%%%%%%%%%%%%%%%%%%%%%%%%%%%%%%%%%%%%%%%%%%%%

\bigskip

\textit{Acknowledgements}
We would like to express our sincere gratitude to the two reviewers for their valuable feedback and insightful suggestions, which greatly contributed to improving the quality of this work.
The work of JFTR is supported by the JSPS Postdoctoral Fellowships for Research in Japan (Grant Number JP24KF0221), and partially by the JSPS Grant-in-Aid for Early-Career Scientists (Grant Number JP23K13012) and the JST CREST (Grant Number JPMJCR2014).

%References 
\bibliographystyle{alpha}
\bibliography{main}

\end{document}